\newtheorem{thm}{Theorem}[section]
\newtheorem{cor}[thm]{Corollary}
\newtheorem{lem}[thm]{Lemma}
\newtheorem{thmy}{Theorem}
\newtheorem{prop}[thm]{Proposition}
\theoremstyle{definition}
\newtheorem{defn}[thm]{Definition}
\theoremstyle{remark}
\newtheorem{rem}[thm]{\bf Remark}
\numberwithin{equation}{section}
\newcommand{\Hom}{\mathrm{Hom}}
\newcommand{\sg}{\mathrm{sg}}
\newcommand{\smallotimes}{\mathbin{\mathpalette\make@small\otimes}}
\newcommand{\make@small}[2]{%
  \vcenter{\hbox{%
    $\m@th\ifx#1\displaystyle\scriptstyle\else\ifx#1\textstyle\scriptstyle
     \else\scriptscriptstyle\fi\fi#2$%
  }}%
}
\begin{document}

\title{The singular Yoneda category and the stabilization functor}

\author{Xiao-Wu Chen and Zhengfang Wang}

\subjclass[2010]{16E05,  16E35, 18G80, 16E65}
\keywords{singularity category, stabilization functor, recollement, singular Yoneda category, bar resolution}

\date{\today}

\begin{abstract}
For a noetherian ring $\Lambda$, the stabilization functor in the sense of Krause yields an embedding of the singularity category of $\Lambda$ into the homotopy category of acyclic complexes of injective $\Lambda$-modules. When $\Lambda$ contains a semisimple artinian subring $E$, we give an explicit description of the stabilization functor using the Hom complexes in the $E$-relative singular Yoneda dg category of $\Lambda$.
\end{abstract}

\maketitle


\section{Introduction}

Let $\Lambda$ be a  left noetherian ring. Denote by $\Lambda\mbox{-mod}$ the abelian category of finitely generated $\Lambda$-modules and by $\mathbf{D}^b(\Lambda\mbox{-mod})$ its bounded derived category. Following \cite{Buc, Orl},  the {\it singularity category} $\mathbf{D}_{\rm sg}(\Lambda)$ of $\Lambda$ is defined to be the Verdier quotient of $\mathbf{D}^b(\Lambda\mbox{-mod})$ modulo the full subcategory formed by perfect complexes. The singularity category $\mathbf{D}_{\rm sg}(\Lambda)$ measures the homological singularity of $\Lambda$ in the following sense: $\mathbf{D}_{\rm sg}(\Lambda)$ vanishes if and only if every finitely generated $\Lambda$-module has finite projective dimension. The singularity categories of certain hypersurfaces are related to categories of  B-branes in  Landau-Ginzburg models \cite{Orl}.

Denote by $\mathbf{K}(\Lambda\mbox{-Inj})$  the homotopy category of complexes of arbitrary injective $\Lambda$-modules and by $\mathbf{K}_{\rm ac}(\Lambda\mbox{-Inj})$ its full subcategory formed by acyclic complexes. It is shown in \cite{Kra} that $\mathbf{K}_{\rm ac}(\Lambda\mbox{-Inj})$ is a \emph{compactly generated completion} of $\mathbf{D}_{\rm sg}(\Lambda)$. More precisely, $\mathbf{K}_{\rm ac}(\Lambda\mbox{-Inj})$ is compactly generated and its full subcategory $\mathbf{K}_{\rm ac}(\Lambda\mbox{-Inj})^c$ of compact objects is triangle equivalent to $\mathbf{D}_{\rm sg}(\Lambda)$, up to direct summands.

We mention that such a concrete completion  makes it possible to apply the rich theory of compactly generated triangulated categories \cite{Neeman, Kel94} to  singularity categories. However, the relevant functor from $\mathbf{D}_{\rm sg}(\Lambda)$ to $\mathbf{K}_{\rm ac}(\Lambda\mbox{-Inj})$  is  neither explicit nor trivial  as explained below.

The following recollement \cite{BBD} among  compactly generated triangulated categories is established in \cite{Kra}.
\begin{equation}\label{rec:Kra-intr}
\xymatrix{
\mathbf{K}_{\rm ac}(\Lambda\mbox{-Inj})  \ar[rr]^-{\rm inc} &&  \ar@/_1.5pc/[ll]|{\bar{\bf a}}  \ar@/^1.5pc/[ll] \mathbf{K}(\Lambda\mbox{-Inj})  \ar[rr]^{\rm can} && \mathbf{D}(\Lambda\mbox{-Mod}) \ar@/_1.5pc/[ll]|{\bar{\bf p}}  \ar@/^1.5pc/[ll]|{\bf i}
}
\end{equation}
Here,  $\mathbf{D}(\Lambda\mbox{-Mod})$ is the unbounded derived category of complexes of $\Lambda$-modules, ``inc" is the inclusion functor and ``can" is the canonical functor. Following \cite{Kra}, the  composite functor
$$\mathbb{S}=\bar{\bf a}{\bf i}\colon \mathbf{D}(\Lambda\mbox{-}{\rm Mod})\longrightarrow \mathbf{K}_{\rm ac}(\Lambda\mbox{-}{\rm Inj})$$
is called the \emph{stabilization functor}. As $\mathbb{S}$ vanishes on perfect complexes, its restriction to $\mathbf{D}^b(\Lambda\mbox{-mod})$ induces a well-defined functor
\begin{align}\label{equ:ff}
\mathbf{D}_{\rm sg}(\Lambda)\longrightarrow \mathbf{K}_{\rm ac}(\Lambda\mbox{-Inj}).
\end{align}
The resulting functor yields an equivalence up to direct summands between $\mathbf{D}_{\rm sg}(\Lambda)$ and $\mathbf{K}_{\rm ac}(\Lambda\mbox{-Inj})^c$.

The stabilization functor $\mathbb{S}$ is crucial in the recollement (\ref{rec:Kra-intr}), as it is a triangulated analogue of the gluing functors in the dg  setting \cite{KL}  and $\infty$-categorical setting \cite{Lur, DJW}. By \cite{CL},  the comma category of $\mathbb{S}$  yields the middle term $\mathbf{K}(\Lambda\mbox{-Inj})$ up to an explicit equivalence. If the ring $\Lambda$ is \emph{Gorenstein}, that is, $\Lambda$ is two-sided noetherian such that it has finite selfinjective dimension on each side,  applying $\mathbb{S}$ to any $\Lambda$-module yields a complete injective resolution of the module; see \cite{Kra}. In other words, $\mathbb{S}$ provides a functorial construction for complete injective resolutions.

It is well known that the functor ${\bf i}$ in (\ref{rec:Kra-intr}) assigns to any complex its dg-injective resolution. However, the functor $\bar{\bf a}$ is not well understood. Consequently, the stabilization functor $\mathbb{S}=\bar{\bf a}{\bf i}$ is mysterious in a certain sense.

The goal of this work is to describe the stabilization functor $\mathbb{S}$ explicitly.

We will assume that $\Lambda$ contains a subring $E$ which is  semisimple artinian, for example a field. Our tool is the \emph{$E$-relative singular Yoneda dg category} $\mathcal{SY}=\mathcal{SY}_{\Lambda/E}$ introduced in \cite{CW}. The objects of $\mathcal{SY}$ are just complexes of $\Lambda$-modules, and for bounded complexes $X, Y$ of finitely generated modules, the Hom complex $\mathcal{SY}(X, Y)$ computes the shifted Hom groups in the singularity category $\mathbf{D}_{\rm sg}(\Lambda)$. In other words, $\mathcal{SY}$ contains a dg enhancement \cite{BK} of the singularity category. We mention that each Hom complex in  $\mathcal{SY}$ is defined as an explicit colimit; see Section \ref{sec:SY}.

The main results, proved in Sections~\ref{sec:SY} and \ref{sec:8},  are summarized as follows.  We  view $\Lambda$ as a stalk complex concentrated in degree zero, and thus an object in $\mathcal{SY}$.
\vskip 5pt

\noindent {\bf Theorem.} \;\emph{Let $\Lambda$ be a left noetherian ring and $E\subseteq \Lambda$ a semisimple artinian subring. Denote by $\mathcal{SY}$ the $E$-relative singular Yoneda dg category of $\Lambda$. Then the following hold.
\begin{enumerate}
\item $\mathcal{SY}(\Lambda, -)\colon \mathbf{D}(\Lambda\mbox{-}{\rm Mod})\rightarrow \mathbf{K}_{\rm ac}(\Lambda\mbox{-}{\rm Inj})$ is a well-defined triangle functor.
\item There is a natural transformation $c\colon \mathbb{S}\rightarrow \mathcal{SY}(\Lambda, -)$ such that its restriction to the full subcategory $\mathbf{D}^{+}(\Lambda\mbox{-}{\rm Mod})$ consisting of cohomologically bounded below complexes  is a natural isomorphism. Moreover, if $\Lambda$ is Gorenstein, $c$ is a natural isomorphism.
\end{enumerate}}

There are two immediate consequences of these results. By the first half of (2), we infer that the functor (\ref{equ:ff}) is naturally isomorphic to the following functor
\[
\mathcal{SY}(\Lambda, -) \colon \mathbf{D}_{\rm sg}(\Lambda)\longrightarrow \mathbf{K}_{\rm ac}(\Lambda\mbox{-Inj}),
\]
which is induced by the restriction of $\mathcal{SY}(\Lambda, -)$ to $\mathbf{D}^b(\Lambda\mbox{-mod})$.
 If $\Lambda$ is Gorenstein, we combine the second half of (2) with \cite[Section~7]{Kra}, and infer that $\mathcal{SY}(\Lambda, M)$ provides  an explicit complete injective resolution for any $\Lambda$-module $M$.

In a certain sense, the whole paper is devoted to the proof of the above results.

The first key step is to describe the two functors  ${\bf i}$ and $\bar{\bf p}$ in \eqref{rec:Kra-intr} using the \emph{$E$-relative Yoneda dg category} $\mathcal{Y}=\mathcal{Y}_{\Lambda/E}$ of $\Lambda$, which is a natural dg enhancement of  the derived category using the bar resolution \cite{Kel94, CW}; see Propositions~\ref{prop:Y-i} and \ref{prop:bar-p}. The second one is to interpret both $\mathbb{S}$ and $\mathcal{SY}(\Lambda, -)$ as the mapping cones of explicit quasi-isomorphisms, so that it is possible to compare them; see Theorems~\ref{thm:SY} and \ref{thm:S}.  We mention that such an interpretation actually  lifts the stabilization functor  $\mathbb{S}$ to the dg level.

The paper is structured as follows. In Section~2, we recall the dg-projective and dg-injective resolutions of complexes using the bar resolution. In Section~3, we recall the Yoneda dg category $\mathcal{Y}$ in \cite{CW} and prove in Proposition~\ref{prop:Y-i} that its Hom complexes yield dg-injective resolutions of complexes. For later use, we study an explicit quasi-isomorphism (\ref{equ:epsilon}) in Section~4. In Section~5, we study noncommutative differential forms with values in complexes, and their compatibility with the truncated bar resolutions as shown by an explicit commutative diagram in $\mathcal{Y}$; see Proposition~\ref{prop:Omega-bar}.

In Section~6, we recall the singular Yoneda dg category $\mathcal{SY}$ in \cite{CW}. We prove in Theorem~\ref{thm:SY} that for any complex $X$ of $\Lambda$-modules,  the Hom complex $\mathcal{SY}(\Lambda, X)$ is homotopy equivalent to the mapping cone of an explicit quasi-isomorphism $\vartheta_X$ in (\ref{equ:vartheta}). In Section~7, we describe the functor $\bar{\bf p}$ in \eqref{rec:Kra-intr} in Proposition~\ref{prop:bar-p}. We prove in Theorem~\ref{thm:S} that $\mathbb{S}(X)$ is homotopy equivalent to the mapping cone of an explicit quasi-isomorphism $\kappa_X$ in (\ref{equ:kappa}). In the final section, we compare $\mathcal{SY}(\Lambda, X)$ and $\mathbb{S}(X)$ in Theorem~\ref{thm:comparison}. As an application of the comparison, we lift a result in \cite{Kra} to the dg level.

Throughout the paper, we fix a semisimple artinian ring $E$. The unadorned Hom and tensor are over $E$. We will always assume that $\Lambda$ is a ring containing $E$ as a subring with the same unit. By $\Lambda$-modules, we mean left $\Lambda$-modules, and by complexes, we mean cochain complexes. We will abbreviate ``differential graded" as dg.

\section{Resolutions of complexes via the bar resolution}\label{section:singularyonedacomplex}

In this section, we recall basic facts on dg-projective resolutions and dg-injective resolutions of complexes via the bar resolution.

 We denote by $\Lambda\mbox{-Mod}$ the category of $\Lambda$-modules, and by $C(\Lambda\mbox{-Mod})$ the category of complexes of $\Lambda$-modules. Denote by $\mathbf{K}(\Lambda\mbox{-Mod})$ the homotopy category of complexes of $\Lambda$-modules and by  $\mathbf{D}(\Lambda\mbox{-Mod})$  the derived category. We will always view a module as a stalk complex concentrated in degree zero.

 A complex of $\Lambda$-modules is usually denoted by $X=(X^n, d_X^n)_{n\in \mathbb{Z}}$, where $d_X^n$ is often written simply as $d_X$. We will use $\Sigma$ to denote the suspension functor for cochain complexes. To be more precise, the suspended complex $\Sigma(X)$ is described by $\Sigma(X)^n=X^{n+1}$ and $d_{\Sigma(X)}^n=-d_X^{n+1}$.

For a cochain map $f=(f^n)_{n\in \mathbb{Z}}\colon X\rightarrow Y$ between complexes, the mapping cone ${\rm Cone}(f)$ is a complex described as follows:
$${\rm Cone}(f)^n=Y^n\oplus X^{n+1}, \quad d_{{\rm Cone}(f)}^n=\begin{pmatrix}d_Y^n & f^{n+1}\\
0 & -d_X^{n+1} \end{pmatrix}.$$
We have the following standard exact triangle in $\mathbf{K}(\Lambda\mbox{-Mod})$:
\begin{align}\label{tri:standard}
X\stackrel{f}\longrightarrow Y \stackrel{\binom{\mathbf{1}}{0}} \longrightarrow {\rm Cone}(f)\stackrel{(0\; \mathbf{1})}\longrightarrow \Sigma(X).
\end{align}
Here, we use $\mathbf{1}$ to denote the identity endomorphism.
\subsection{The bar resolution}\label{subs:2.1}

As $E$ is  a subring of $\Lambda$,  $\Lambda$ is naturally an $E$-$E$-bimodule.  Denote by $\bar \Lambda=\Lambda/E$ the quotient $E$-$E$-bimodule; as above, it is always viewed as a stalk complex concentrated in degree zero. Denote by $s\bar{\Lambda}$ the $1$-shifted complex, which is a stalk complex concentrated in degree $-1$. For any $a\in \Lambda$, the corresponding element in $s\bar{\Lambda}$ is written as  $s\bar{a}$. Denote by
$$T(s\bar{\Lambda})=E\oplus s\bar{\Lambda}\oplus (s\bar{\Lambda})^{\otimes 2}\oplus \cdots$$
the tensor ring of the $E$-$E$-bimodule $s\bar{\Lambda}$. A typical element in $(s\bar{\Lambda})^{\otimes n}$ is of the form $s\bar{a}_1\otimes \cdots \otimes s\bar{a}_n$, which is often abbreviated as $s{\bar{a}}_{1,n}$. We observe that the degree of $s{\bar{a}}_{1,n}$ is  $-n$.

The {\it normalised $E$-relative bar resolution} $\mathbb{B}$ is a complex of $\Lambda$-$\Lambda$-bimodules
\[
 \mathbb{B}: = \Lambda \otimes T(s \bar \Lambda)\otimes \Lambda,
\]
whose differential is the external one $d_{\rm ex}$ given by
\begin{equation}\label{equ:external}
\begin{split}
 d_{\rm ex}(a_0 \smallotimes  s  {\bar a_{1, n}}\smallotimes  a_{n+1})={}&a_0a_1\smallotimes  s  { \bar a_{2, n}} \smallotimes  a_{n+1}  + (-1)^{n} a_0\smallotimes  s  {\bar a_{1, n-1}} \smallotimes  a_na_{n+1}\\
 & +\sum_{i=1}^{n-1}(-1)^{i}a_0 \smallotimes  s  { \bar a_{1, i-1}} \smallotimes  s  {\overline{a_{i}a_{i+1}}} \smallotimes  s  {\bar a_{i+2, n}} \smallotimes  a_{n+1}.
\end{split}
\end{equation}
Here, the expressions $s\bar{a}_{1, 0}\smallotimes$ for $i=1$, and $s\bar{a}_{n+1, n}\smallotimes$ for $i=n-1$, are understood as the empty word.

For each $p\geq 0$, we consider the following subcomplex of $\mathbb{B}$:
$$\mathbb{B}_{<p}=\bigoplus_{0\leq n < p} \Lambda \otimes (s\bar{\Lambda})^{\otimes n}\otimes \Lambda.$$
Here, we understand $\mathbb{B}_{<0}$ as the zero complex, and $\mathbb{B}_{<1}$ as $\Lambda \otimes \Lambda=\Lambda \otimes E\otimes \Lambda$. The corresponding  quotient complex $\mathbb{B}/\mathbb{B}_{<p}$ will be denoted by $\mathbb{B}_{\geq p}$.

Composing the projection $\mathbb{B}\rightarrow \Lambda\otimes \Lambda$ with the multiplication map $\Lambda\otimes \Lambda \rightarrow \Lambda$, we obtain the following natural map
$$\varepsilon\colon \mathbb{B}\longrightarrow \Lambda.$$
It  is a quasi-isomorphism between complexes of $\Lambda$-$\Lambda$-bimodules. Here, as above $\Lambda$ is viewed a stalk complex concentrated in degree zero. As in (\ref{tri:standard}), we have the following standard  exact triangle of complexes of $\Lambda$-$\Lambda$-bimodules
\begin{align}\label{tri:bar}
\mathbb{B}\stackrel{\varepsilon }\longrightarrow \Lambda \longrightarrow {\rm Cone}(\varepsilon)\longrightarrow \Sigma(\mathbb{B}).
\end{align}
We refer to \cite[Section~8.6]{Wei} and \cite[Subsection~6.6]{Kel94} for more details on the bar resolution.

\subsection{Resolutions of complexes}\label{subs:2.2}

For two complexes $X=(X^n, d^n_X)_{n\in \mathbb{Z}}$ and $Y=(Y^n, d^n_Y)_{n\in \mathbb{Z}}$ of $\Lambda$-modules, the \emph{Hom-complex} ${\rm Hom}_\Lambda(X, Y)$ is a complex of abelian groups defined as follows: its $p$-th component ${\rm Hom}_\Lambda(X, Y)^p$ consists of graded maps $f\colon X\rightarrow Y$ of graded $\Lambda$-modules that have degree $p$, namely, $f(X^n)\subseteq Y^{p+n}$ for each $n \in \mathbb{Z}$; its differential is defined such that
$$d(f)=d_Y\circ f-(-1)^{|f|}f\circ d_X.$$
The following well-known isomorphisms
\begin{align}\label{iso:keyfor}
H^n {\rm Hom}_\Lambda(X, Y)\simeq {\rm Hom}_{\mathbf{K}(\Lambda\mbox{-}{\rm Mod})}(X, \Sigma^n(Y)),\quad  \forall n\in \mathbb{Z},
\end{align}
will be used often.

Recall that a  complex $P$ of $\Lambda$-modules is \emph{$\mathbf{K}$-projective} if the Hom-complex ${\rm Hom}_\Lambda (P, Z)$ is acyclic for any acyclic complex $Z$ of $\Lambda$-modules; a complex $P$
is \emph{dg-projective} provided that it is $\mathbf{K}$-projective and each component $P^n$ is a projective $\Lambda$-module. Dually, a complex $I$ of $\Lambda$-modules is \emph{$\mathbf{K}$-injective} if the Hom-complex ${\rm Hom}_\Lambda (Z, I)$ is acyclic for any acyclic complex $Z$ of $\Lambda$-modules. A $\mathbf{K}$-injective complex $I$ is called \emph{dg-injective} if in addition each component $I^n$ is an injective module.

We refer to \cite[Section~1.4]{BBD} for details on recollements. We mention its analogues in the dg setting \cite[Section~4]{KL} and  $\infty$-categorical setting; see \cite[Appendix~A.8]{Lur} and \cite[Example~1.4]{DJW}. In drawing an adjoint pair, we always put the left adjoint functor in  the upper position.

Denote by $\mathbf{K}_{\rm ac}(\Lambda\mbox{-Mod})$ the homotopy category of acyclic complexes of $\Lambda$-modules. The following recollement is well known.
\begin{equation}\label{rec:cal}
\xymatrix{
\mathbf{K}_{\rm ac}(\Lambda\mbox{-Mod})  \ar[rr]^-{\rm inc} &&  \ar@/_1.5pc/[ll]|{\bf a}  \ar@/^1.5pc/[ll]|{{\bf a}'} \mathbf{K}(\Lambda\mbox{-Mod})  \ar[rr]^{\rm can} && \mathbf{D}(\Lambda\mbox{-Mod}) \ar@/_1.5pc/[ll]|{\bf p}  \ar@/^1.5pc/[ll]|{\bf i}
}
\end{equation}
Here, ``inc'' denotes the inclusion and ``can" denotes the quotient functor. For each $X$, we have a unique exact triangle in $\mathbf{K}(\Lambda\mbox{-Mod})$:
\begin{align}\label{tri:p}
{\bf p}(X)\longrightarrow X\longrightarrow {\bf a}(X)\longrightarrow \Sigma {\bf p}(X)
\end{align}
such that ${\bf p}(X)$ is dg-projective  and ${\bf a}(X)$ is acyclic; consequently, the cochain map ${\bf p}(X)\rightarrow X$ is a quasi-isomorphism. Dually, we have a unique exact triangle in $\mathbf{K}(\Lambda\mbox{-Mod})$:
\begin{align}\label{tri:i}{\bf a}'(X)\longrightarrow X\longrightarrow {\bf i}(X)\longrightarrow \Sigma {\bf a}'(X)
\end{align}
such that ${\bf i}(X)$ is dg-injective and ${\bf a}'(X)$ is acyclic; thus the cochain map $X\rightarrow {\bf i}(X)$ is a quasi-isomorphism.

We call ${\bf p}(X)$ and ${\bf i}(X)$ the \emph{dg-projective resolution} and \emph{dg-injective resolution} of $X$, respectively. Indeed, one should call the quasi-isomorphisms ${\bf p}(X)\rightarrow X$ and $X\rightarrow {\bf i}(X)$ the corresponding resolutions. For details, we refer to \cite[Section~3]{Kel94} and \cite[Section~4.3]{Kra22}.

 We mention that the recollement (\ref{rec:cal}) exists for any ring. In general, the four non-obvious functors are not explicitly given. In what follows, using the semisimple artinian subring $E$ and  the $E$-relative bar resolution $\mathbb{B}$, we describe these functors more explicitly.

 \begin{lem}\label{lem:projres}
Let $X$ be any complex of $\Lambda$-modules. Then as a complex of $\Lambda$-modules, $\mathbb{B}\otimes_\Lambda X$ is dg-projective, and the following map
$$\varepsilon \otimes_\Lambda {\rm Id}_X\colon \mathbb{B}\otimes_\Lambda X\longrightarrow \Lambda\otimes_\Lambda X=X$$
is a quasi-isomorphism. Consequently, we have isomorphisms ${\bf p}\simeq \mathbb{B}\otimes_\Lambda-$ and ${\bf a}\simeq {\rm Cone}(\varepsilon)\otimes_\Lambda-$ of functors.
\end{lem}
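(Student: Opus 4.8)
\textbf{Proof proposal for Lemma~\ref{lem:projres}.}

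The plan is to prove the three assertions in order: first that $\mathbb{B}\otimes_\Lambda X$ is dg-projective, then that $\varepsilon\otimes_\Lambda \id_X$ is a quasi-isomorphism, and finally to deduce the identification of the functors ${\bf p}$ and ${\bf a}$ from the uniqueness built into the triangles \eqref{tri:p} and \eqref{tri:i}.

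For the first assertion, I would argue that $\mathbb{B}\otimes_\Lambda X$ is $\mathbf K$-projective and then check the componentwise projectivity. Since $\mathbb{B}=\Lambda\otimes T(s\bar\Lambda)\otimes\Lambda$ is, as a complex of right $\Lambda$-modules (indeed as a graded right $\Lambda$-module), a direct sum of copies of $\Lambda$ tensored up over $E$ with the $E$-modules $(s\bar\Lambda)^{\otimes n}$, and because $E$ is semisimple artinian so that every $E$-module is projective, each component of $\mathbb{B}\otimes_\Lambda X=\Lambda\otimes T(s\bar\Lambda)\otimes X$ is a direct sum of modules of the form $\Lambda\otimes_E V$ with $V$ an $E$-module, hence a projective $\Lambda$-module. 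For $\mathbf K$-projectivity, the key point is the finite filtration idea via the subcomplexes $\mathbb{B}_{<p}$: the associated graded pieces $\mathbb{B}_{<p}/\mathbb{B}_{<p-1}\cong \Lambda\otimes (s\bar\Lambda)^{\otimes(p-1)}\otimes\Lambda$ (with only the external differential) are, after applying $-\otimes_\Lambda X$, complexes of the form $\Lambda\otimes_E\big((s\bar\Lambda)^{\otimes(p-1)}\otimes X\big)$ where $(s\bar\Lambda)^{\otimes(p-1)}\otimes X$ is a bounded-below (in the relevant sense) complex of $E$-modules; since $E$ is semisimple, such complexes are contractible up to their cohomology and in any case $\Lambda\otimes_E(-)$ sends them to $\mathbf K$-projective complexes (each is a summand of a complex with projective components and split exact, essentially because $E$-complexes decompose as sums of stalks and two-term complexes $E\xrightarrow{\sim}E$). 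Passing to the colimit over $p$ and using that a complex which is the union of a chain of $\mathbf K$-projective subcomplexes with $\mathbf K$-projective subquotients is $\mathbf K$-projective, I obtain that $\mathbb{B}\otimes_\Lambda X$ is $\mathbf K$-projective. (Alternatively, one can cite \cite[Subsection~6.6]{Kel94} or \cite[Section~8.6]{Wei} directly for the fact that $\mathbb{B}$ is a $\mathbf K$-projective resolution of $\Lambda$ as a $\Lambda$-$\Lambda$-bimodule, which is flat as a right module, and then tensor.)

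For the second assertion, note that $\varepsilon\colon\mathbb{B}\to\Lambda$ is a quasi-isomorphism of complexes of $\Lambda$-$\Lambda$-bimodules whose cone ${\rm Cone}(\varepsilon)$ is acyclic; as a complex of \emph{right} $\Lambda$-modules, every component of $\mathbb{B}$ and of $\Lambda$ is projective (again because $E$ is semisimple), so ${\rm Cone}(\varepsilon)$ is an acyclic complex of projective right $\Lambda$-modules, hence flat, so its tensor product with any complex $X$ of left modules stays acyclic — equivalently, $-\otimes_\Lambda X$ sends the quasi-isomorphism $\varepsilon$ to a quasi-isomorphism. Concretely, $\varepsilon\otimes_\Lambda\id_X$ fits into the triangle ${\rm Cone}(\varepsilon)\otimes_\Lambda X[-1]\to \mathbb{B}\otimes_\Lambda X\to X$ and the outer term is acyclic. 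Finally, for the functor identification: the composite $\mathbb{B}\otimes_\Lambda X\xrightarrow{\varepsilon\otimes\id} X$ is a quasi-isomorphism with dg-projective source, and ${\rm Cone}(\varepsilon\otimes_\Lambda\id_X)={\rm Cone}(\varepsilon)\otimes_\Lambda X$ is acyclic, so the triangle
\[
\mathbb{B}\otimes_\Lambda X\longrightarrow X\longrightarrow {\rm Cone}(\varepsilon)\otimes_\Lambda X\longrightarrow \Sigma(\mathbb{B}\otimes_\Lambda X)
\]
in $\mathbf{K}(\Lambda\mbox{-Mod})$ has exactly the shape \eqref{tri:p} characterizing ${\bf p}(X)\to X\to{\bf a}(X)$. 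By the uniqueness of that triangle (and its functoriality in $X$) we get natural isomorphisms ${\bf p}\simeq\mathbb{B}\otimes_\Lambda-$ and ${\bf a}\simeq{\rm Cone}(\varepsilon)\otimes_\Lambda-$.

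The main obstacle is the $\mathbf K$-projectivity of $\mathbb{B}\otimes_\Lambda X$: componentwise projectivity is immediate from $E$ being semisimple, but establishing $\mathbf K$-projectivity requires the filtration-and-colimit argument above, controlling the subquotients $\mathbb{B}_{<p}/\mathbb{B}_{<p-1}$ after tensoring and invoking the stability of $\mathbf K$-projectivity under extensions and transfinite composition. Everything else — the quasi-isomorphism and the functor identification — is then a formal consequence of flatness of $\mathbb{B}$ over $\Lambda$ on the right and the uniqueness of the defining triangles in the recollement \eqref{rec:cal}.
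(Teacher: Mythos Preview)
Your approach is essentially the same as the paper's: the filtration by $\mathbb{B}_{\leq p}\otimes_\Lambda X$ for dg-projectivity (the paper phrases the conclusion as ``property (P)'' in the sense of \cite[Section~3]{Kel94}), then the quasi-isomorphism, then comparison with the defining triangle~\eqref{tri:p}.

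Two small issues are worth fixing. First, in the quasi-isomorphism step, the inference ``acyclic complex of projective right $\Lambda$-modules, hence flat, so its tensor product with any complex $X$ stays acyclic'' is not valid as stated: an unbounded acyclic complex of flat modules need not be $\mathbf{K}$-flat. What saves you here is that ${\rm Cone}(\varepsilon)$ is bounded above, hence (being acyclic with projective right components) contractible as a complex of right $\Lambda$-modules. Equivalently---and this is how the paper argues---$\varepsilon$ is a homotopy equivalence between the underlying complexes of right $\Lambda$-modules, so $\varepsilon\otimes_\Lambda{\rm Id}_X$ is a homotopy equivalence of complexes of abelian groups, in particular a quasi-isomorphism. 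Second, your parenthetical ``bounded-below (in the relevant sense)'' for $(s\bar\Lambda)^{\otimes(p-1)}\otimes X$ is not correct for arbitrary $X$; what you actually want is that over the semisimple ring $E$ any complex splits as a complex with zero differential plus a contractible complex, and $\Lambda\otimes_E-$ applied to the former yields a direct summand of a coproduct of shifts of $\Lambda$. With these adjustments your argument matches the paper's.
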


\begin{proof}
We observe that $\mathbb{B}\otimes_\Lambda X$ is the union of the following ascending chain of subcomplexes
$$\mathbb{B}_{\leq 0}\otimes_\Lambda X\subseteq \mathbb{B}_{\leq 1}\otimes_\Lambda X\subseteq \mathbb{B}_{\leq 2}\otimes_\Lambda X\subseteq \cdots.$$
These inclusions are componentwise split, and the factors are isomorphic to
$$\mathbb{B}^{-p}\otimes_\Lambda X\simeq \Lambda\otimes (s\bar{\Lambda})^{\otimes p}\otimes X.$$
As $E$ is semisimple artinian, we infer that these factors are isomorphic to direct summands of coproducts of $\Sigma^m(\Lambda)$. It follows that $\mathbb{B}\otimes_\Lambda X$ satisfies the property (P) in \cite[Section~3]{Kel94}, and thus is dg-projective.

As $\varepsilon$ is a homotopy equivalence between the underlying complexes of right $\Lambda$-modules, the map $\varepsilon\otimes_\Lambda{\rm Id}_X$ is a homotopy equivalence of complexes of abelian groups. In particular, it is a quasi-isomorphism. Now, applying $-\otimes_\Lambda X$ to the standard triangle (\ref{tri:bar}), we obtain an exact triangle, that is isomorphic to (\ref{tri:p}). Then we infer the required isomorphisms of functors.
\end{proof}

The following dual lemma seems to be less well known.

\begin{lem}\label{lem:injres}
Let $X$ be any complex of $\Lambda$-modules. Then as a complex of $\Lambda$-modules, ${\rm Hom}_\Lambda(\mathbb{B}, X)$ is dg-injective, and the following map
$${\rm Hom}_\Lambda(\varepsilon, X) \colon X={\rm Hom}_\Lambda(\Lambda, X) \longrightarrow {\rm Hom}_\Lambda(\mathbb{B}, X)$$
is a quasi-isomorphism.
 Consequently, we have isomorphisms ${\bf i}\simeq {\rm Hom}_\Lambda(\mathbb{B}, -)$ and ${\bf a}'\simeq {\rm Hom}_\Lambda({\rm Cone}(\varepsilon), -)$ of functors.
\end{lem}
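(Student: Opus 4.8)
The plan is to prove Lemma~\ref{lem:injres} by dualizing the argument of Lemma~\ref{lem:projres}, replacing ``property (P)'' by the corresponding ``property (I)'' of \cite[Section~3]{Kel94} for $\mathbf{K}$-injective complexes. The key structural fact is that $\Hom_\Lambda(\mathbb{B}, X)$ carries a natural \emph{descending} filtration: since $\mathbb{B}=\bigcup_p \mathbb{B}_{\leq p}$ with componentwise-split inclusions $\mathbb{B}_{\leq p-1}\hookrightarrow \mathbb{B}_{\leq p}$, applying the exact functor $\Hom_\Lambda(-,X)$ (exact on componentwise-split short exact sequences) yields surjections $\Hom_\Lambda(\mathbb{B}_{\leq p},X)\twoheadrightarrow \Hom_\Lambda(\mathbb{B}_{\leq p-1},X)$, and $\Hom_\Lambda(\mathbb{B},X)=\varprojlim_p \Hom_\Lambda(\mathbb{B}_{\leq p},X)$. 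The successive kernels are $\Hom_\Lambda(\mathbb{B}^{-p}/\mathbb{B}_{\leq p-1}^{-p}, X)$-type terms, i.e. $\Hom_\Lambda(\Lambda\otimes(s\bar\Lambda)^{\otimes p}\otimes\Lambda, X)\simeq \Hom_E((s\bar\Lambda)^{\otimes p}\otimes\Lambda, X)$ up to a shift by $\Sigma^{-p}$. Because $E$ is semisimple artinian, each $E$-$E$-bimodule $(s\bar\Lambda)^{\otimes p}\otimes\Lambda$ is projective as a right $E$-module (indeed every $E$-module is projective and injective), so $\Hom_E((s\bar\Lambda)^{\otimes p}\otimes\Lambda, X^n)$ is a product of copies of $X^n$-type injective building blocks; more precisely these factors are direct summands of products of shifts $\Sigma^m(\Hom_E(\Lambda, X))$, and $\Hom_E(\Lambda,X)$ is a complex of injective $\Lambda$-modules which is $\mathbf{K}$-injective (it is the injective-adjoint image $\Hom_E(\Lambda,-)$ applied to a complex, hence dg-injective whenever $X$ is, but we need it for arbitrary $X$).

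Concretely, I would argue as follows. First, establish that $\Hom_E(\Lambda, X)$ — equivalently $\mathbb{B}_{<1}$ applied, the ``degree-zero layer'' — is dg-injective for \emph{every} complex $X$: its components $\Hom_E(\Lambda, X^n)$ are coinduced, hence injective $\Lambda$-modules, and $\mathbf{K}$-injectivity follows from the adjunction $\Hom_\Lambda(Z, \Hom_E(\Lambda, X))\simeq \Hom_E(Z, X)$ together with the fact that $\Hom_E(Z,X)$ is acyclic for acyclic $Z$ (as $E$ is semisimple, $\Hom_E(-,X^n)$ is exact, so $\Hom_E(Z,X)$ computes the cohomology correctly, and for acyclic $Z$ of $E$-modules it is contractible since over a semisimple ring every acyclic complex is contractible). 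Next, dualize the ``union of split subcomplexes'' argument: $\Hom_\Lambda(\mathbb{B},X)$ is an inverse limit of a tower whose transition maps are componentwise-split epimorphisms and whose kernels are shifts of coproducts of summands of $\Hom_E(\Lambda,X)$, hence dg-injective; this is exactly property (I), so $\Hom_\Lambda(\mathbb{B},X)$ is dg-injective. Finally, for the quasi-isomorphism: $\varepsilon\colon \mathbb{B}\to\Lambda$ is a homotopy equivalence of complexes of \emph{left} $\Lambda$-modules (the contracting homotopy on the bar complex is built from the unit $E\to\Lambda$ and is left $\Lambda$-linear on $\mathbb{B}$ viewed with its left module structure — here one must be slightly careful which side, but the standard bar homotopy $a_0\otimes s\bar a_{1,n}\otimes a_{n+1}\mapsto 1\otimes s\bar a_{0,n}\otimes a_{n+1}$ is left $\Lambda$-linear), so $\Hom_\Lambda(\varepsilon, X)$ is a homotopy equivalence of complexes of abelian groups, in particular a quasi-isomorphism.

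Having both halves, the consequence about functors follows just as in Lemma~\ref{lem:projres}: applying $\Hom_\Lambda(-, X)$ to the standard triangle \eqref{tri:bar} gives an exact triangle
\[
\Hom_\Lambda({\rm Cone}(\varepsilon), X)\longrightarrow \Hom_\Lambda(\Lambda, X)=X \longrightarrow \Hom_\Lambda(\mathbb{B}, X)\longrightarrow \Sigma \Hom_\Lambda({\rm Cone}(\varepsilon), X)
\]
in $\mathbf{K}(\Lambda\mbox{-Mod})$, where the contravariance turns the triangle around. Since $\Hom_\Lambda(\mathbb{B},X)$ is dg-injective and $\Hom_\Lambda(\varepsilon,X)$ is a quasi-isomorphism, $\Hom_\Lambda({\rm Cone}(\varepsilon),X)$ is acyclic, so this triangle is isomorphic to \eqref{tri:i}. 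By uniqueness of that triangle we obtain ${\bf i}\simeq \Hom_\Lambda(\mathbb{B},-)$ and ${\bf a}'\simeq \Hom_\Lambda({\rm Cone}(\varepsilon),-)$.

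The main obstacle I anticipate is the ``property (I)'' verification, i.e.\ correctly formulating and checking the dual of Keller's property (P) for inverse limits: one needs that an inverse limit of a tower of complexes with componentwise-split surjective transition maps and dg-injective kernels (equivalently, dg-injective layers) is again dg-injective, and that $\Hom_\Lambda(\mathbb{B},X)$ genuinely has this form with the grading/shift bookkeeping $\Hom_\Lambda(\Sigma^{-p}(\cdots),X)=\Sigma^{p}\Hom_\Lambda(\cdots,X)$ handled correctly. A secondary subtlety is the left-versus-right module structure on $\mathbb{B}$ when asserting that $\varepsilon$ is a homotopy equivalence of \emph{left} $\Lambda$-modules (in Lemma~\ref{lem:projres} the relevant side was the right one); one must use the contracting homotopy that is linear over the correct side, but this is standard for the normalized bar resolution and causes no real difficulty.
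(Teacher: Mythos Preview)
Your proposal is essentially correct, but it differs from the paper's proof in how $\mathbf{K}$-injectivity of $\Hom_\Lambda(\mathbb{B},X)$ is established, and there are two small slips worth flagging.

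\textbf{Comparison with the paper.} The paper does \emph{not} dualise the filtration argument of Lemma~\ref{lem:projres}. Instead it uses the global Hom--tensor adjunction
\[
\Hom_\Lambda\bigl(Z,\Hom_\Lambda(\mathbb{B},X)\bigr)\;\simeq\;\Hom_\Lambda(\mathbb{B}\otimes_\Lambda Z,\,X)
\]
and then invokes Lemma~\ref{lem:projres} directly: for acyclic $Z$, the complex $\mathbb{B}\otimes_\Lambda Z$ is dg-projective and acyclic, hence contractible, so the right-hand side is acyclic. This bootstraps $\mathbf{K}$-injectivity from the already-proved $\mathbf{K}$-projectivity in one line, avoiding any verification of property~(I). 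Your route---building the inverse-limit tower $\Hom_\Lambda(\mathbb{B},X)=\varprojlim_p\Hom_\Lambda(\mathbb{B}_{\leq p},X)$, checking that the layers $\Hom_E((s\bar\Lambda)^{\otimes p}\otimes\Lambda,X)$ are dg-injective via the coinduction adjunction, and appealing to the dual of Keller's property~(P)---is valid and entirely self-contained, but it repeats work that Lemma~\ref{lem:projres} has already done. The paper's argument buys brevity; yours buys independence from Lemma~\ref{lem:projres}.

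\textbf{Two small corrections.} First, in your description of the layers you write ``coproducts of summands of $\Hom_E(\Lambda,X)$''; dualising correctly gives \emph{products}, since $\Hom_E(\bigoplus_j M_j,X)\simeq\prod_j\Hom_E(M_j,X)$. Second, the explicit contracting homotopy you write down, $a_0\otimes s\bar a_{1,n}\otimes a_{n+1}\mapsto 1\otimes s\bar a_{0,n}\otimes a_{n+1}$, is \emph{right} $\Lambda$-linear, not left; the left-linear homotopy inserts $1$ on the right, $a_0\otimes s\bar a_{1,n}\otimes a_{n+1}\mapsto \pm\,a_0\otimes s\bar a_{1,n+1}\otimes 1$. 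You anticipated this subtlety (``one must be slightly careful which side''), and the correct homotopy exists, so the conclusion that $\Hom_\Lambda(\varepsilon,X)$ is a homotopy equivalence of abelian groups stands. The remaining parts of your argument---componentwise injectivity, the quasi-isomorphism, and the deduction of ${\bf i}\simeq\Hom_\Lambda(\mathbb{B},-)$ and ${\bf a}'\simeq\Hom_\Lambda({\rm Cone}(\varepsilon),-)$ from the triangle~\eqref{tri:bar}---match the paper exactly.
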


\begin{proof}
The $p$-th component of ${\rm Hom}_\Lambda(\mathbb{B}, X)$ is an infinite product $\prod_{n\geq 0} {\rm Hom}_\Lambda(\mathbb{B}^{-n}, X^{p-n})$. We observe the following canonical isomorphism
$$ {\rm Hom}_\Lambda(\mathbb{B}^{-n}, X^{p-n})\simeq {\rm Hom}(\bar{\Lambda}^{\otimes n}\otimes \Lambda, X^{p-n}).$$
As $\bar{\Lambda}^{\otimes n}\otimes \Lambda$ is a projective right $\Lambda$-module, we infer by the Hom-tensor adjunction that ${\rm Hom}(\bar{\Lambda}^{\otimes n}\otimes \Lambda, X^{p-n})$ is an injective $\Lambda$-module. This proves that each component of  ${\rm Hom}_\Lambda(\mathbb{B}, X)$  is injective.

Take any acyclic complex $Z$ of $\Lambda$-module. We have the following canonical isomorphism of complexes
$${\rm Hom}_\Lambda (Z, {\rm Hom}_\Lambda(\mathbb{B}, X)) \simeq {\rm Hom}_\Lambda(\mathbb{B}\otimes_\Lambda Z, X).$$
By Lemma~\ref{lem:projres}, $\mathbb{B}\otimes_\Lambda Z$ is dg-projective and acyclic, as it is quasi-isomorphic to $Z$. Then it is contractible. It follows that the Hom-complex ${\rm Hom}_\Lambda(\mathbb{B}\otimes_\Lambda Z, X)$ is acyclic; compare (\ref{iso:keyfor}).  Therefore, ${\rm Hom}_\Lambda (Z, {\rm Hom}_\Lambda(\mathbb{B}, X))$ is also acyclic. In summary, we have proved that ${\rm Hom}_\Lambda(\mathbb{B}, X)$ is $\mathbf{K}$-injective and thus  dg-injective.

As $\varepsilon$ is a homotopy equivalence between the underlying complexes of left $\Lambda$-modules, it follows that ${\rm Hom}_\Lambda(\varepsilon, X)$ is a homotopy equivalence between the Hom-complexes of abelian groups. We infer that it is a quasi-isomorphism.

The required isomorphisms of functors follow by applying ${\rm Hom}_\Lambda(-, X)$ to (\ref{tri:bar}) and comparing the resulting triangle with (\ref{tri:i}).
\end{proof}

\begin{rem}\label{rem:homo}
 Assume that $X$ is a complex of $\Lambda$-$\Lambda$-bimodules. Then  by the same reasoning in the third paragraph of the above proof, the quasi-isomorphism ${\rm Hom}_\Lambda(\varepsilon, X)$ is even a homotopy equivalence between the complexes of right $\Lambda$-modules.
\end{rem}

In summary, we infer from Lemmas~\ref{lem:projres} and \ref{lem:injres} that the recollement \eqref{rec:cal} may be rewritten as follows.
\begin{equation}\label{rec:calrevised}
\xymatrix@C=3.5pc{
\mathbf{K}_{\rm ac}(\Lambda\mbox{-Mod})  \ar[rr]^-{\rm inc} &&  \ar@/_1.5pc/[ll]|{\mathbf{a}={\rm Cone}(\varepsilon)\otimes_\Lambda-}  \ar@/^1.5pc/[ll]|{\mathbf{a}'= {\rm Hom}_\Lambda({\rm Cone}(\varepsilon), -)} \mathbf{K}(\Lambda\mbox{-Mod})  \ar[rr]^{\rm can} && \mathbf{D}(\Lambda\mbox{-Mod}) \ar@/_1.5pc/[ll]|{\mathbf{p}=\mathbb{B}\otimes_\Lambda-}  \ar@/^1.5pc/[ll]|{\mathbf{i}={\rm Hom}_\Lambda(\mathbb{B}, -)}
}
\end{equation}

\section{The Yoneda dg category and dg-injective resolutions}
We will describe  dg-injective resolutions of complexes via the Yoneda dg category introduced in \cite{CW}.

\subsection{Preliminaries on dg categories}
We will recall two basic results on dg categories. The main references for dg categories are \cite{Kel94, Kel06}.

Let $\mathcal{C}$ be a dg category.  For two objects $A$ and $B$, the Hom set is usually denoted by $\mathcal{C}(A, B)$, which is a complex of abelian groups. A homogeneous morphism $f\colon A\rightarrow B$ is \emph{closed}, if $d_\mathcal{C}(f)=0$.

\begin{lem}\label{lem:contra}
For an object $A$ in a dg category $\mathcal{C}$, the following statements are equivalent:
\begin{enumerate}
\item[(1)] $H^0(\mathcal{C}(A, A))=0$;
\item[(2)] ${\rm Id}_A\in \mathcal{C}(A, A)$ is a coboundary;
\item[(3)]  for each object $X$, the Hom complex $\mathcal{C}(A, X)$ is acyclic.
\end{enumerate}
\end{lem}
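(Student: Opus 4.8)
The statement is the standard ``contractible object'' lemma, and the plan is to prove the cyclic chain of implications $(2)\Rightarrow(3)\Rightarrow(1)\Rightarrow(2)$, since $(2)\Rightarrow(1)$ is immediate once $(3)$ is available. The whole argument rests on the identity $\mathcal{C}(A,X)$ being a module over the dg endomorphism ring $\mathcal{C}(A,A)$ via composition, together with the Leibniz rule for $d_\mathcal{C}$.

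First I would treat $(2)\Rightarrow(3)$. Suppose $\mathrm{Id}_A = d_\mathcal{C}(h)$ for some $h\in\mathcal{C}(A,A)^{-1}$. Given any closed homogeneous $f\in\mathcal{C}(A,X)$, I want to exhibit it as a coboundary. The natural candidate is $f\circ h$: using the Leibniz rule $d_\mathcal{C}(f\circ h)=d_\mathcal{C}(f)\circ h\pm f\circ d_\mathcal{C}(h)$, the first term vanishes because $f$ is closed, and the second is $\pm f\circ\mathrm{Id}_A=\pm f$. Absorbing the sign into the choice of homotopy (i.e. using $\mp h$ or noting $f = \pm d_\mathcal{C}(f\circ h)$), this shows every cocycle in $\mathcal{C}(A,X)$ is a coboundary, so $\mathcal{C}(A,X)$ is acyclic; this holds for every object $X$.

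The implication $(3)\Rightarrow(1)$ is trivial: taking $X=A$ in $(3)$ gives that $\mathcal{C}(A,A)$ is acyclic, and in particular $H^0(\mathcal{C}(A,A))=0$. Finally, for $(1)\Rightarrow(2)$: since $\mathrm{Id}_A$ is a closed morphism of degree $0$ (closedness holds because $d_\mathcal{C}(\mathrm{Id}_A)=d_\mathcal{C}(\mathrm{Id}_A\circ\mathrm{Id}_A)=d_\mathcal{C}(\mathrm{Id}_A)\circ\mathrm{Id}_A+\mathrm{Id}_A\circ d_\mathcal{C}(\mathrm{Id}_A)=2\,d_\mathcal{C}(\mathrm{Id}_A)$, forcing $d_\mathcal{C}(\mathrm{Id}_A)=0$), it defines a class in $H^0(\mathcal{C}(A,A))$. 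The hypothesis $(1)$ says this class is zero, i.e.\ $\mathrm{Id}_A$ is a coboundary, which is exactly $(2)$. There is no real obstacle here; the only point requiring a moment's care is bookkeeping the Koszul signs in the Leibniz rule in step $(2)\Rightarrow(3)$, but since $h$ has degree $-1$ and $f$ is closed, the sign only affects which scalar multiple of $h$ serves as the contracting homotopy and never obstructs the conclusion.
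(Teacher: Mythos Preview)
Your proof is correct and follows essentially the same route as the paper: the paper proves $(3)\Rightarrow(1)$ trivially, $(1)\Rightarrow(2)$ by noting that $\mathrm{Id}_A$ is always a cocycle, and $(2)\Rightarrow(3)$ via the Leibniz rule applied to $f\circ u$ with $d_\mathcal{C}(u)=\mathrm{Id}_A$, exactly as you do. The only cosmetic difference is that you supply the short argument for $d_\mathcal{C}(\mathrm{Id}_A)=0$, which the paper simply asserts.
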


We will say that such an object $A$ is \emph{contractible} in $\mathcal{C}$. Thanks to (2), any dg functor sends contractible objects to contractible objects.

\begin{proof}
The implication ``$(3)\Rightarrow (1)$" is trivial. We observe that ${\rm Id}_A$ is always closed, that is, a cocycle in $\mathcal{C}(A, A)$. Then we infer ``$(1)\Rightarrow (2)$".

It remains to show ``$(2)\Rightarrow (3)$". We fix $u\in \mathcal{C}(A, A)$ of degree $-1$ satisfying $d_\mathcal{C}(u)={\rm Id}_A$. For any cocycle $f\in \mathcal{C}(A, X)$, using the graded Leibniz rule we have
$$d_\mathcal{C}(f\circ u)=d_\mathcal{C}(f)\circ u+(-1)^{|f|}f\circ d_\mathcal{C}(u)=(-1)^{|f|}f.$$
It implies that $f$ is a coboundary, as required.
\end{proof}

Let $\mathcal{C}$ be a dg category.  The \emph{homotopy category} $H^0(\mathcal{C})$ is a pre-additive category with the same objects as $\mathcal{C}$ such that $H^0(\mathcal{C})(A, B)=H^0(\mathcal{C}(A, B))$. A closed morphism $f\colon A\rightarrow B$ of degree zero is called a \emph{homotopy equivalence} in $\mathcal{C}$, if its image in $H^0(\mathcal{C})$ is an isomorphism. It is equivalent to the following condition: there is a closed morphism $g\colon B\rightarrow A$ of degree zero such that both $g\circ f-{\rm Id}_A$ and $f\circ g-{\rm Id}_B$ are coboundaries; such a morphism $g$ is called a \emph{homotopy inverse} of $f$.

 \begin{lem}\label{lem:homo-inv}
For a closed morphism $f\colon A\rightarrow B$ of degree zero in a dg category $\mathcal{C}$, the following statements are equivalent:
\begin{enumerate}
\item[(1)]  for any object $X$, the cochain map $\mathcal{C}(f, X)\colon \mathcal{C}(B, X)\rightarrow \mathcal{C}(A, X)$ induces an isomorphism between $H^0(\mathcal{C}(B, X))$ and $H^0(\mathcal{C}(A, X))$;
\item[(2)]  $f$ is a homotopy equivalence in $\mathcal{C}$;
\item[(3)]  for any object $X$, the cochain map $\mathcal{C}(f, X)\colon \mathcal{C}(B, X)\rightarrow \mathcal{C}(A, X)$ is an quasi-isomorphism.
\end{enumerate}
\end{lem}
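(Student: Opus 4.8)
The plan is to prove the cyclic chain of implications $(2)\Rightarrow(3)\Rightarrow(1)\Rightarrow(2)$, since $(3)\Rightarrow(1)$ is immediate (a quasi-isomorphism of complexes induces isomorphisms on all cohomology, in particular $H^0$). The only genuine content is therefore $(2)\Rightarrow(3)$ and $(1)\Rightarrow(2)$.

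For $(2)\Rightarrow(3)$: suppose $f$ is a homotopy equivalence with homotopy inverse $g\colon B\to A$, so that $g\circ f-\mathrm{Id}_A=d_\mathcal{C}(u)$ and $f\circ g-\mathrm{Id}_B=d_\mathcal{C}(v)$ for some degree $-1$ morphisms $u,v$. Applying $\mathcal{C}(-,X)$ is a dg functor into complexes of abelian groups, so it carries the closed degree-zero maps $f,g$ to cochain maps and carries the homotopy data to homotopy data: $\mathcal{C}(g,X)$ is a cochain map with $\mathcal{C}(f,X)\circ\mathcal{C}(g,X)-\mathrm{Id}$ and $\mathcal{C}(g,X)\circ\mathcal{C}(f,X)-\mathrm{Id}$ both null-homotopic (the homotopies being given by precomposition with $u$ and $v$, up to sign, via the graded Leibniz rule exactly as in the proof of Lemma~\ref{lem:contra}). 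Hence $\mathcal{C}(f,X)$ is a homotopy equivalence of complexes, therefore a quasi-isomorphism.

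For $(1)\Rightarrow(2)$: this is the Yoneda-type argument, and I expect it to be the main point. Assume (1). Specializing $X=A$, the map $\mathcal{C}(f,A)\colon\mathcal{C}(B,A)\to\mathcal{C}(A,A)$ is an isomorphism on $H^0$; since $\mathrm{Id}_A$ is a degree-zero cocycle, there is a degree-zero cocycle $g\colon B\to A$ with $g\circ f-\mathrm{Id}_A$ a coboundary in $\mathcal{C}(A,A)$. It remains to see that $f\circ g-\mathrm{Id}_B$ is a coboundary in $\mathcal{C}(B,B)$, i.e.\ that $g$ is also a right homotopy inverse. Consider the degree-zero cocycle $h:=f\circ g-\mathrm{Id}_B\in\mathcal{C}(B,B)$; I want its class in $H^0(\mathcal{C}(B,B))$ to vanish. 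Apply the hypothesis with $X=B$: $\mathcal{C}(f,B)$ is injective on $H^0$, so it suffices to show $[h\circ f]=0$ in $H^0(\mathcal{C}(A,B))$. But $h\circ f=f\circ g\circ f-f=f\circ(g\circ f-\mathrm{Id}_A)$, and $g\circ f-\mathrm{Id}_A=d_\mathcal{C}(u)$, so $h\circ f=f\circ d_\mathcal{C}(u)=d_\mathcal{C}(f\circ u)$ (using that $f$ is closed), hence $[h\circ f]=0$. Therefore $[h]=0$, i.e.\ $f\circ g-\mathrm{Id}_B$ is a coboundary, and $g$ is a two-sided homotopy inverse, proving (2).

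The only subtlety worth flagging is the bookkeeping of signs from the graded Leibniz rule when transporting homotopies through $\mathcal{C}(-,X)$, but these are routine and parallel to the computation already carried out in the proof of Lemma~\ref{lem:contra}; no new idea is needed. Everything else is a direct application of the hypotheses to the two test objects $X=A$ and $X=B$.
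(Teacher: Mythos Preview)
Your proof is correct and follows essentially the same route as the paper: the same cycle of implications, the same argument for $(2)\Rightarrow(3)$ via transporting the homotopy inverse along $\mathcal{C}(-,X)$, and the same Yoneda argument for $(1)\Rightarrow(2)$. The only cosmetic difference is that the paper dispatches $(1)\Rightarrow(2)$ in one line by invoking the Yoneda Lemma in $H^0(\mathcal{C})$, whereas you unpack that lemma by hand (choosing $X=A$ to produce $g$, then $X=B$ with injectivity on $H^0$ to upgrade $g$ to a two-sided inverse); these are the same argument at different levels of detail.
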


\begin{proof}
Since $f$ is closed of degree zero, the map $\mathcal{C}(f, X)$ is indeed a cochain map. For ``$(1)\Rightarrow (2)$", we have that $H^0(\mathcal{C})(f, X)$ is an isomorphism for any object $X$. By Yoneda Lemma, we infer that $f$ is an isomorphism in $H^0(\mathcal{C})$.

The implication ``$(3)\Rightarrow (1)$" is trivial. It remains to show ``$(2)\Rightarrow (3)$". For this, we take a homotopy inverse $g$ of $f$. Then it is direct to verify that $\mathcal{C}(g, X)$ is homotopy inverse of $\mathcal{C}(f, X)$ in the category of complexes of abelian groups. In particular, $\mathcal{C}(f, X)$ is a quasi-isomorphism.
\end{proof}

The main example of a dg category is the dg category $C_{\rm dg}(\Lambda\mbox{-Mod})$ formed by complexes of $\Lambda$-modules: the Hom sets are given by the corresponding Hom-complexes; see Subsection~\ref{subs:2.2}. We observe that the homotopy category $H^0(C_{\rm dg}(\Lambda\mbox{-Mod}))$ coincides with the usual homotopy category $\mathbf{K}(\Lambda\mbox{-Mod})$. The contractible objects in $C_{\rm dg}(\Lambda\mbox{-Mod})$ are precisely the usual contractible complexes. Similarly, homotopy equivalences in $C_{\rm dg}(\Lambda\mbox{-Mod})$ are precisely the usual homotopy equivalences between complexes.

\subsection{The Yoneda dg category}

Following \cite[Section~7]{CW}, we define the \emph{$E$-relative Yoneda dg category} $\mathcal{Y}=\mathcal{Y}_{\Lambda/E}$ of $\Lambda$ as follows.  It has the same objects as $C_{\rm dg}(\Lambda\mbox{-Mod})$. For two complexes $X$ and $Y$ of $\Lambda$-modules, the underlying graded group of $\mathcal{Y}(X, Y)$ is given by an infinite product
$$\mathcal{Y}(X, Y)=\prod_{p\geq 0}{\rm Hom}((s\bar{\Lambda})^{\otimes p}\otimes X, Y).$$
Set
$$\mathcal{Y}_p(X, Y):={\rm Hom}((s\bar{\Lambda})^{\otimes p}\otimes X, Y).$$
 We observe that $\mathcal{Y}_0(X, Y)={\rm Hom}(X, Y)$. Elements in $\mathcal{Y}_p(X, Y)$ is said to have \emph{filtration-degree} $p$. As usual, for a graded map $f\in \mathcal{Y}(X, Y)$, we denote by $|f|$ its cohomological degree.   The differential $\delta$ of $\mathcal{Y}(X, Y)$ is determined by
$$\begin{pmatrix}\delta_{\rm in}\\
\delta_{\rm ex}\end{pmatrix}\colon \mathcal{Y}_p(X, Y)\longrightarrow \mathcal{Y}_p(X, Y)\oplus \mathcal{Y}_{p+1}(X, Y),$$
where the internal one is given by
$$\delta_{\rm in}(f)(s\bar{a}_{1,p}\otimes x)=d_Y (f(s\bar{a}_{1,p}\otimes x))-(-1)^{|f|+p}f(s\bar{a}_{1,p}\otimes d_X(x))$$
and the external one is given by
\begin{align*}
\delta_{\rm ex}(f)(s\bar{a}_{1,p+1}\otimes x) = & (-1)^{|f|+1}a_1 f(s\bar{a}_{2,p+1}\otimes x)+(-1)^{|f|+p}f(s\bar{a}_{1,p}\otimes a_{p+1}x)\\
&+\sum_{i=1}^{p}(-1)^{|f|+i+1} f(s\bar{a}_{1, i-1}\otimes s\overline{a_ia_{i+1}}\otimes s\bar{a}_{i+2, p+1}\otimes x).
\end{align*}
Here, as in Subsection~\ref{subs:2.1}, the expressions $s\bar{a}_{1, 0}\otimes$ for $i=1$, and $s\bar{a}_{p+2, p+1}\otimes$ for $i=p$, are understood as the empty word.

The composition $\odot$ of morphisms in $\mathcal{Y}$ is defined as follows: for $f\in \mathcal{Y}_p(X, Y)$ and $g\in \mathcal{Y}_q(Y, Z)$, their composition $g\odot f\in \mathcal{Y}_{p+q}(X, Z)$ is given such that
\begin{equation}\label{equation:yonedaproduct}
(g\odot f)(s\bar{a}_{1, p+q}\otimes x)=(-1)^{q\cdot|f|} g(s\bar{a}_{1, q}\otimes f(s\bar{a}_{q+1, p+q}\otimes x)).
\end{equation}
The identity endomorphism in $\mathcal{Y}(X, X)$ is given by the identity map ${\rm Id}_X\in \mathcal{Y}_0(X, X)$. We mention that $\mathcal{Y}$ is implicit in \cite[Subsection~6.6]{Kel94}.

By \cite[the proof of Lemma~7.1]{CW}, we have a canonical isomorphism of complexes
\begin{align}\label{iso:Y-B}
\alpha_{X, Y}\colon \mathcal{Y}(X, Y) \longrightarrow {\rm Hom}_\Lambda(\mathbb{B}\otimes_\Lambda X, Y), \quad f\mapsto \tilde{f}.
\end{align}
 The isomorphism sends $f\in \mathcal{Y}_p(X, Y)$ to $\tilde{f}\colon (\Lambda\otimes (s\bar{\Lambda})^{\otimes p}\otimes \Lambda) \otimes_\Lambda X\rightarrow Y $ given by
$$\tilde{f}((a\otimes s\bar{a}_{1, p} \otimes b)\otimes_\Lambda x)=a f(s\bar{a}_{1, p}\otimes bx).$$
By Lemma~\ref{lem:projres}, the Hom-complex  ${\rm Hom}_\Lambda(\mathbb{B}\otimes_\Lambda X, Y)$ computes the Hom groups in the derived category $\mathbf{D}(\Lambda\mbox{-Mod})$. Consequently,  we have isomorphisms
\begin{align}\label{iso:Y-D}
H^n(\mathcal{Y}(X, Y))\simeq {\rm Hom}_{\mathbf{D}(\Lambda\mbox{-}{\rm Mod})}(X, \Sigma^n(Y))
\end{align}
for all $n\in \mathbb{Z}$.

\begin{rem}\label{rem:c}
Assume that $X$ is a complex of $\Lambda$-$\Lambda$-bimodules. Then both $\mathcal{Y}(X, Y)$ and $ {\rm Hom}_\Lambda(\mathbb{B}\otimes_\Lambda X, Y)$ are  complexes of  $\Lambda$-modules. Then the isomorphism $\alpha_{X, Y}$ becomes an isomorphism of complexes of $\Lambda$-modules. Taking $X=\Lambda$, we infer that $\mathcal{Y}(\Lambda, Y)$ is a complex of $\Lambda$-modules; moreover, by applying $\alpha_{\Lambda, Y}$ and Lemma~\ref{lem:injres}, it is even dg-injective.
\end{rem}

The natural inclusion 
\begin{align}\label{align:naturalinclusion}
{\rm Hom}_\Lambda(X, Y)\subseteq \mathcal{Y}_0(X, Y)\subseteq \mathcal{Y}(X, Y)
\end{align} 
makes ${\rm Hom}_\Lambda(X, Y)$ a subcomplex of $\mathcal{Y}(X, Y)$. Therefore, we view $C_{\rm dg}(\Lambda\mbox{-Mod})$ as a non-full dg subcategory of $\mathcal{Y}$. In particular, cochain maps between complexes are viewed as morphisms in $\mathcal{Y}$, that have filtration-degree zero.

\begin{prop}\label{cor:Y}
Keep the notation as above. Then the following two statements hold.
\begin{enumerate}
\item  Any acyclic complex $X$ is a contractible object in $\mathcal{Y}$.
\item  Any quasi-isomorphism $f\colon X\rightarrow Y$ between complexes is a homotopy equivalence in $\mathcal{Y}$.
\end{enumerate}
\end{prop}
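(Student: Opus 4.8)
The plan is to reduce both statements to elementary facts about the derived category $\mathbf{D}(\Lambda\mbox{-Mod})$, by exploiting the description of the Hom complexes of $\mathcal{Y}$ given by the isomorphism $\alpha_{X,Y}$ of \eqref{iso:Y-B} — and, on cohomology, by \eqref{iso:Y-D} — together with the contractibility criterion of Lemma~\ref{lem:contra} and the homotopy-equivalence criterion of Lemma~\ref{lem:homo-inv}.

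For (1), I would apply Lemma~\ref{lem:contra}: it suffices to check that $H^0(\mathcal{Y}(X,X))=0$. Since $X$ is acyclic, it is a zero object of $\mathbf{D}(\Lambda\mbox{-Mod})$, so $\Hom_{\mathbf{D}(\Lambda\mbox{-Mod})}(X,X)=0$, and then \eqref{iso:Y-D} gives $H^0(\mathcal{Y}(X,X))=0$; hence $X$ is contractible in $\mathcal{Y}$. Alternatively, $\alpha_{X,X}$ identifies $\mathcal{Y}(X,X)$ with $\Hom_\Lambda(\mathbb{B}\otimes_\Lambda X,X)$; by Lemma~\ref{lem:projres} the complex $\mathbb{B}\otimes_\Lambda X$ is dg-projective and acyclic, hence contractible, so every Hom complex out of it — in particular $\mathcal{Y}(X,X)$ — is contractible.

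For (2), I would apply Lemma~\ref{lem:homo-inv}: it is enough to show that the cochain map $\mathcal{Y}(f,Z)\colon\mathcal{Y}(Y,Z)\to\mathcal{Y}(X,Z)$ is a quasi-isomorphism for every complex $Z$. Since $\alpha_{-,Z}$ is natural in the first variable through the functor $\mathbb{B}\otimes_\Lambda-$, the identification \eqref{iso:Y-D} turns the map induced by $\mathcal{Y}(f,Z)$ on $H^n$ into $\Hom_{\mathbf{D}(\Lambda\mbox{-Mod})}(f,\Sigma^n Z)$, which is an isomorphism because $f$, being a quasi-isomorphism, is invertible in $\mathbf{D}(\Lambda\mbox{-Mod})$; hence $\mathcal{Y}(f,Z)$ is a quasi-isomorphism and $f$ is a homotopy equivalence in $\mathcal{Y}$. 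The same conclusion can be reached more concretely: the naturality square of $\varepsilon\otimes_\Lambda{\rm Id}$ and the two-out-of-three property show that $\mathbb{B}\otimes_\Lambda f$ is a quasi-isomorphism between dg-projective complexes, hence a homotopy equivalence (its mapping cone is acyclic and dg-projective, so contractible), so that $\Hom_\Lambda(\mathbb{B}\otimes_\Lambda f,Z)$, and thus $\mathcal{Y}(f,Z)$, is a homotopy equivalence. One could also derive (2) from (1) by applying the cohomological functor $\mathcal{Y}(-,Z)$ to the standard triangle \eqref{tri:standard} determined by $f$, whose third term ${\rm Cone}(f)$ is acyclic and hence contractible in $\mathcal{Y}$.

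I do not expect a serious obstacle here: the argument is essentially formal once Lemmas~\ref{lem:contra}, \ref{lem:homo-inv} and \ref{lem:projres} and the isomorphisms \eqref{iso:Y-B}--\eqref{iso:Y-D} are in hand. The only bookkeeping is to confirm the naturality of $\alpha_{X,Y}$ — and hence of \eqref{iso:Y-D} — in the variable being varied, which is immediate from the explicit formulas but requires tracking the Koszul signs in the composition $\odot$ of \eqref{equation:yonedaproduct}, and to quote the two standard facts used above: a Hom complex out of a contractible complex is contractible, and a quasi-isomorphism between dg-projective (indeed $\mathbf{K}$-projective) complexes is a homotopy equivalence.
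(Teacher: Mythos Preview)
Your proposal is correct and follows essentially the same route as the paper: both parts reduce to elementary facts about $\mathbf{D}(\Lambda\mbox{-Mod})$ via the isomorphism \eqref{iso:Y-D}, and then invoke Lemma~\ref{lem:contra} and Lemma~\ref{lem:homo-inv} respectively. The paper's write-up is terser (it verifies condition~(3) of Lemma~\ref{lem:contra} rather than~(1), but this is immaterial since the conditions are equivalent), and your alternative arguments via Lemma~\ref{lem:projres} and dg-projectivity are also sound but not needed.
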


\begin{proof}
Recall that any acyclic complex is a zero object in $\mathbf{D}(\Lambda\mbox{-}{\rm Mod})$, and that any quasi-isomorphism between complexes becomes an isomorphism in $\mathbf{D}(\Lambda\mbox{-}{\rm Mod})$. Combining the isomorphism (\ref{iso:Y-D}) with  Lemma~\ref{lem:contra}(3), we infer (1). Similarly, using (\ref{iso:Y-D}) and Lemma~\ref{lem:homo-inv}(3), we infer (2).
\end{proof}

 Consequently, we have a dg functor
$$\mathcal{Y}(\Lambda, -)\colon C_{\rm dg}(\Lambda\mbox{-Mod})\longrightarrow C_{\rm dg}(\Lambda\mbox{-Mod}),$$
which induces a triangle endofunctor
$$\mathcal{Y}(\Lambda, -)\colon \mathbf{K}(\Lambda\mbox{-Mod})\longrightarrow \mathbf{K}(\Lambda\mbox{-Mod})$$
 between the homotopy categories. By the following lemma, we have an induced triangle functor
$$\mathcal{Y}(\Lambda, -)\colon \mathbf{D}(\Lambda\mbox{-Mod})\longrightarrow \mathbf{K}(\Lambda\mbox{-Mod}).$$

\begin{lem}\label{lem:Y-quasi-iso}
For any quasi-isomorphism $g\colon Y\rightarrow Y'$ of complexes of $\Lambda$-modules, we have that $\mathcal{Y}(\Lambda, g)\colon \mathcal{Y}(\Lambda, Y)\rightarrow \mathcal{Y}(\Lambda, Y')$ is an isomorphism in $\mathbf{K}(\Lambda\mbox{-}{\rm Mod})$.
\end{lem}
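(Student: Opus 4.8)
The plan is to deduce the statement directly from Proposition~\ref{cor:Y}(2) via the familiar principle that the covariant representable functor of a dg category preserves homotopy equivalences. Since $g\colon Y\to Y'$ is a quasi-isomorphism, Proposition~\ref{cor:Y}(2) supplies a homotopy inverse $h\colon Y'\to Y$ in $\mathcal{Y}$: it is closed of degree $0$, and there are $u\in\mathcal{Y}(Y,Y)$ and $v\in\mathcal{Y}(Y',Y')$, both of degree $-1$, with $h\odot g-{\rm Id}_Y=\delta(u)$ and $g\odot h-{\rm Id}_{Y'}=\delta(v)$. I would then show that post-composition with $h$, namely $f\mapsto h\odot f$, defines a morphism $\mathcal{Y}(\Lambda,h)\colon\mathcal{Y}(\Lambda,Y')\to\mathcal{Y}(\Lambda,Y)$ in $C_{\rm dg}(\Lambda\mbox{-Mod})$ which is a homotopy inverse of $\mathcal{Y}(\Lambda,g)$; passing to $H^0(C_{\rm dg}(\Lambda\mbox{-Mod}))=\mathbf{K}(\Lambda\mbox{-Mod})$ then gives the assertion.

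In more detail, the steps are as follows. First, because $h$ is closed of degree $0$, the graded Leibniz rule for the Yoneda composition $\odot$ gives $\delta(h\odot f)=h\odot\delta(f)$, so $f\mapsto h\odot f$ is a cochain map of degree $0$; it is $\Lambda$-linear since, by Remark~\ref{rem:c}, the left $\Lambda$-action on $\mathcal{Y}(\Lambda,-)$ is precomposition with the right $\Lambda$-action on the bimodule $\Lambda$, which commutes with post-composition by $h$. Second, associativity of $\odot$ gives $\mathcal{Y}(\Lambda,h)\circ\mathcal{Y}(\Lambda,g)=(h\odot g)\odot(-)$ and $\mathcal{Y}(\Lambda,g)\circ\mathcal{Y}(\Lambda,h)=(g\odot h)\odot(-)$, while ${\rm Id}_Y\odot(-)={\rm Id}$ on $\mathcal{Y}(\Lambda,Y)$. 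Third, applying the Leibniz rule to $u$ (of degree $-1$) yields $\delta(u)\odot f=\delta(u\odot f)+u\odot\delta(f)$, i.e. $(h\odot g-{\rm Id}_Y)\odot(-)$ is null-homotopic, with homotopy $f\mapsto u\odot f$, and likewise for $g\odot h-{\rm Id}_{Y'}$ using $v$. Hence $\mathcal{Y}(\Lambda,h)$ and $\mathcal{Y}(\Lambda,g)$ become mutually inverse in $\mathbf{K}(\Lambda\mbox{-Mod})$, which is exactly the claim.

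I do not expect a genuine obstacle here: the whole argument is formal, and the only care needed is the bookkeeping of signs and of filtration-degrees (e.g. $|u\odot f|=|f|-1$) when invoking the graded Leibniz rule for $\odot$. If one prefers to avoid the Yoneda calculus, an alternative route is to use the natural isomorphism $\alpha_{\Lambda,Y}\colon\mathcal{Y}(\Lambda,Y)\to{\rm Hom}_\Lambda(\mathbb{B},Y)$ of complexes of $\Lambda$-modules from \eqref{iso:Y-B} and Remark~\ref{rem:c}: by Lemma~\ref{lem:injres} both ${\rm Hom}_\Lambda(\mathbb{B},Y)$ and ${\rm Hom}_\Lambda(\mathbb{B},Y')$ are dg-injective, the commuting square built from the quasi-isomorphisms ${\rm Hom}_\Lambda(\varepsilon,Y)$ and ${\rm Hom}_\Lambda(\varepsilon,Y')$ forces ${\rm Hom}_\Lambda(\mathbb{B},g)$ to be a quasi-isomorphism by two-out-of-three, and a quasi-isomorphism between $\mathbf{K}$-injective complexes is a homotopy equivalence; transporting back along $\alpha$ gives the statement once more.
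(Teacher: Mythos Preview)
Your proposal is correct. Your main argument---using Proposition~\ref{cor:Y}(2) to obtain a homotopy inverse $h$ in $\mathcal{Y}$ and then checking that post-composition by $h$, $u$, $v$ yields a $\Lambda$-linear homotopy inverse and homotopies for $\mathcal{Y}(\Lambda,g)$---is sound; the $\Lambda$-linearity check via the right action on the first argument $\Lambda$ is exactly the point that needs to be made, and it goes through as you say. The paper, however, takes your \emph{alternative} route as its primary proof: it transports along the isomorphisms $\alpha_{\Lambda,Y}$, $\alpha_{\Lambda,Y'}$ to reduce to showing ${\rm Hom}_\Lambda(\mathbb{B},g)$ is a homotopy equivalence, and then invokes the identification ${\rm Hom}_\Lambda(\mathbb{B},-)\simeq{\bf i}$ from Lemma~\ref{lem:injres}. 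Amusingly, the remark immediately following the lemma in the paper notes that one could instead argue via Proposition~\ref{cor:Y}(2) and the covariant analogue of Lemma~\ref{lem:homo-inv}, which is precisely your main approach. What your route buys is that it stays entirely inside the Yoneda calculus and makes the homotopy inverse explicit, at the cost of the bookkeeping you mention; the paper's route is shorter once the machinery of $\alpha$ and $\mathbf{i}$ is in place, and it avoids having to verify $\Lambda$-linearity of the homotopies by hand.
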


\begin{proof}
Using the isomorphisms $\alpha_{\Lambda, Y}$ and $\alpha_{\Lambda, Y'}$, it suffices to show that
$${\rm Hom}_\Lambda(\mathbb{B}\otimes_\Lambda \Lambda, g)={\rm Hom}_\Lambda(\mathbb{B}, g)$$
is a homotopy equivalence.  Recall from Lemma~\ref{lem:injres} the isomorphism ${\rm Hom}_\Lambda(\mathbb{B}, -)\simeq {\bf i}$. In particular, both functors send quasi-isomorphisms to homotopy equivalences. Then the required statement follows.
\end{proof}

\begin{rem}
One might prove the above lemma alternatively by using Proposition~\ref{cor:Y}(2) and the dual of Lemma~\ref{lem:homo-inv}(3).
\end{rem}

We describe the  dg-injective resolution functor ${\bf i}\colon  \mathbf{D}(\Lambda\mbox{-Mod})\rightarrow \mathbf{K}(\Lambda\mbox{-Mod})$  in the recollement (\ref{rec:calrevised}) via the Yoneda dg category.

\begin{prop}\label{prop:Y-i}
There is an isomorphism $\mathcal{Y}(\Lambda, -)\simeq {\bf i}$ of triangle functors.
\end{prop}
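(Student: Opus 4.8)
The plan is to obtain the isomorphism as a composite of two isomorphisms that are, up to bookkeeping, already in hand. On one side there is the canonical isomorphism $\alpha_{X,Y}$ of (\ref{iso:Y-B}); on the other there is the isomorphism ${\rm Hom}_\Lambda(\mathbb{B},-)\simeq {\bf i}$ recorded in Lemma~\ref{lem:injres}. So the task reduces to making the first isomorphism work as a functor in the second variable and checking that it is compatible with the triangulated structure.

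First I would specialise $\alpha_{X,Y}$ to $X=\Lambda$. Since $\mathbb{B}\otimes_\Lambda\Lambda=\mathbb{B}$ canonically, this produces an isomorphism $\alpha_{\Lambda,Y}\colon \mathcal{Y}(\Lambda,Y)\to{\rm Hom}_\Lambda(\mathbb{B},Y)$, which by Remark~\ref{rem:c} is an isomorphism of complexes of $\Lambda$-modules. Next I would check naturality in $Y$: for a cochain map $g\colon Y\to Y'$, the induced maps on $\mathcal{Y}(\Lambda,-)$ and on ${\rm Hom}_\Lambda(\mathbb{B},-)$ are both post-composition with $g$ — for the Yoneda product this is the case $q=0$ of (\ref{equation:yonedaproduct}), where the sign is trivial — and the explicit formula $\tilde f((a\otimes s\bar a_{1,p}\otimes b)\otimes_\Lambda x)=af(s\bar a_{1,p}\otimes bx)$ together with the $\Lambda$-linearity of $g$ shows the relevant square commutes. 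Hence $\alpha_{\Lambda,-}$ is an isomorphism of dg functors on $C_{\rm dg}(\Lambda\mbox{-Mod})$, and in particular of the induced triangle endofunctors of $\mathbf{K}(\Lambda\mbox{-Mod})$. I would then pass to the derived category: both $\mathcal{Y}(\Lambda,-)$ (Lemma~\ref{lem:Y-quasi-iso}) and ${\rm Hom}_\Lambda(\mathbb{B},-)$ (Lemma~\ref{lem:injres}) invert quasi-isomorphisms, so each factors through $\mathbf{D}(\Lambda\mbox{-Mod})$ as a triangle functor into $\mathbf{K}(\Lambda\mbox{-Mod})$, and $\alpha_{\Lambda,-}$ descends to an isomorphism of triangle functors $\mathcal{Y}(\Lambda,-)\simeq{\rm Hom}_\Lambda(\mathbb{B},-)$. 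Composing with ${\rm Hom}_\Lambda(\mathbb{B},-)\simeq{\bf i}$ from Lemma~\ref{lem:injres} finishes the proof.

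The only genuine — though routine — obstacle is the verification that $\alpha_{\Lambda,-}$ respects the dg structure, i.e. that it is natural and commutes with suspension and mapping cones, so that a pointwise isomorphism genuinely upgrades to an isomorphism of triangle functors; this is immediate from the explicit formula for $\alpha$. As an alternative packaging that sidesteps part of this, one can observe that under $\alpha_{\Lambda,-}$ the natural inclusion $Y={\rm Hom}_\Lambda(\Lambda,Y)\hookrightarrow\mathcal{Y}(\Lambda,Y)$ of (\ref{align:naturalinclusion}) corresponds to ${\rm Hom}_\Lambda(\varepsilon,Y)$; by Lemma~\ref{lem:injres} this is a quasi-isomorphism with dg-injective target, so $Y\hookrightarrow\mathcal{Y}(\Lambda,Y)$ is a functorial dg-injective resolution, and the uniqueness of such resolutions — which is exactly what defines ${\bf i}$ via the triangle (\ref{tri:i}) — identifies $\mathcal{Y}(\Lambda,-)$ with ${\bf i}$ as triangle functors.
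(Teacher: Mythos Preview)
Your proposal is correct and follows essentially the same approach as the paper: specialise $\alpha_{X,Y}$ to $X=\Lambda$ to obtain $\mathcal{Y}(\Lambda,-)\simeq{\rm Hom}_\Lambda(\mathbb{B},-)$, then compose with the isomorphism ${\bf i}\simeq{\rm Hom}_\Lambda(\mathbb{B},-)$ from Lemma~\ref{lem:injres}. You simply spell out the naturality and compatibility with the triangulated structure that the paper leaves implicit, and your alternative packaging via the inclusion $Y\hookrightarrow\mathcal{Y}(\Lambda,Y)$ is exactly the content of Remark~\ref{rem:exp-res}.
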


\begin{proof}
The isomorphisms $\alpha_{\Lambda, Y}$ imply that
$$\mathcal{Y}(\Lambda, -)\simeq{\rm Hom}_\Lambda(\mathbb{B}\otimes_\Lambda \Lambda, -)={\rm Hom}_\Lambda(\mathbb{B}, -).$$
Recall the isomorphism  ${\bf i}\simeq {\rm Hom}_\Lambda(\mathbb{B}, -)$ from  Lemma~\ref{lem:injres}. Combining the two isomorphisms, we obtain the required assertion.
\end{proof}

\begin{rem}\label{rem:exp-res}
Let us describe the dg-injective resolution more explicitly. For any complex $Y$, there is an embedding
$$\eta_Y\colon Y\longrightarrow \mathcal{Y}(\Lambda, Y)$$
sending $y\in Y$ to $\eta_Y(y)\in \mathcal{Y}_0(\Lambda, Y)=\Hom(\Lambda, Y)$ given by $a\mapsto ay$. We observe the following commutative triangle in $C(\Lambda\mbox{-Mod})$.
\[\xymatrix{
Y\ar[rr]^-{\eta_Y} \ar[dr]_-{{\rm Hom}_\Lambda(\varepsilon, Y)} && \mathcal{Y}(\Lambda, Y) \ar[dl]^-{\alpha_{\Lambda, Y}}\\
& {\rm Hom}_\Lambda(\mathbb{B}, Y)
}\]
It follows from Lemma~\ref{lem:injres} that $\eta_Y$ is a quasi-isomorphism. In view of Remark~\ref{rem:c}, we infer that $\eta_Y$ is a dg-injective resolution of $Y$.
\end{rem}

\begin{rem}\label{rem:exp-res-bi}
Taking $Y=\Lambda$ in Remark~\ref{rem:exp-res}, we observe that $\mathcal{Y}(\Lambda, \Lambda)$ is a complex of $\Lambda$-$\Lambda$-bimodule, whose right $\Lambda$-module structure is induced by the one on the second entry. Then  the  embedding
$$\eta_\Lambda\colon \Lambda\longrightarrow \mathcal{Y}(\Lambda, \Lambda)$$
is a cochain map between  complexes of bimodules. As $\alpha_{\Lambda, \Lambda}$ is an isomorphism of complexes of bimodules, we infer from Remark~\ref{rem:homo} that $\eta_\Lambda$ is  a homotopy equivalence on the right side; it is a dg-injective resolution of $\Lambda$ on the left side, as shown in Remark~\ref{rem:exp-res}. We mention that $\eta_\Lambda\colon \Lambda \rightarrow \mathcal{Y}(\Lambda, \Lambda)^{\rm op}$ is a homomorphism of dg algebras, and thus a quasi-isomorphism of dg algebras. Here, ``op" means the opposite dg algebra.
\end{rem}

In view of Proposition~\ref{prop:Y-i}, the following result is indicated by \cite[Proposition~7.3]{CW}.

\begin{prop}\label{prop:Y-quasi}
For any complexes $X$ and $Y$ of $\Lambda$-modules, the following map
\[
\mathcal{Y}(X, Y)\longrightarrow \Hom_\Lambda(\mathcal{Y}(\Lambda, X), \mathcal{Y}(\Lambda, Y)), \quad f\longmapsto (g\mapsto f\odot g).
\]
is a quasi-isomorphism.
\end{prop}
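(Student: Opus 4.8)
The plan is to recognize the map in question as the one induced on Hom complexes by the dg functor $\mathcal{Y}(\Lambda,-)$, and to prove it is a quasi-isomorphism by reducing to cohomology and comparing with the derived category. First I would observe that both sides are complexes of abelian groups and the assignment $f\mapsto(g\mapsto f\odot g)$ is precisely the action of the dg functor $\mathcal{Y}(\Lambda,-)\colon C_{\rm dg}(\Lambda\mbox{-Mod})\to C_{\rm dg}(\Lambda\mbox{-Mod})$ on morphism complexes, followed by Remark~\ref{rem:c} which identifies $\mathcal{Y}(\Lambda,X)$ and $\mathcal{Y}(\Lambda,Y)$ as dg-injective complexes of $\Lambda$-modules; in particular the target $\Hom_\Lambda(\mathcal{Y}(\Lambda,X),\mathcal{Y}(\Lambda,Y))$ makes sense and the map is a cochain map.

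Next I would pass to cohomology. On the source, $H^n(\mathcal{Y}(X,Y))\simeq \Hom_{\mathbf{D}(\Lambda\mbox{-Mod})}(X,\Sigma^n Y)$ by \eqref{iso:Y-D}. On the target, since $\mathcal{Y}(\Lambda,X)$ and $\mathcal{Y}(\Lambda,Y)$ are dg-injective and, by Remark~\ref{rem:exp-res}, $\eta_X$ and $\eta_Y$ are dg-injective resolutions of $X$ and $Y$, the complex $\Hom_\Lambda(\mathcal{Y}(\Lambda,X),\mathcal{Y}(\Lambda,Y))$ computes $\Hom_{\mathbf{D}(\Lambda\mbox{-Mod})}(\mathcal{Y}(\Lambda,X),\Sigma^n\mathcal{Y}(\Lambda,Y))\simeq \Hom_{\mathbf{D}(\Lambda\mbox{-Mod})}(X,\Sigma^n Y)$, using that the source need not be K-projective but the target \emph{is} dg-injective, so the Hom complex of a dg-injective target computes morphisms in $\mathbf{D}(\Lambda\mbox{-Mod})$ regardless of the source; here one invokes \eqref{iso:keyfor} together with the fact that a quasi-isomorphism into a dg-injective complex is a homotopy equivalence, applied to $\eta_X$. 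So both sides have the same cohomology.

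The key point is then to check that the induced map on cohomology is the identity under these identifications, i.e.\ that the square
\[
\xymatrix{
H^n(\mathcal{Y}(X,Y)) \ar[r] \ar[d]_-{\wr} & H^n\big(\Hom_\Lambda(\mathcal{Y}(\Lambda,X),\mathcal{Y}(\Lambda,Y))\big) \ar[d]^-{\wr} \\
\Hom_{\mathbf{D}}(X,\Sigma^n Y) \ar@{=}[r] & \Hom_{\mathbf{D}}(X,\Sigma^n Y)
}
\]
commutes, where the left vertical arrow comes from \eqref{iso:Y-D} and the right one from the resolution $\eta_Y$ together with $\eta_X$. Concretely, given a closed $f\in\mathcal{Y}(X,Y)^n$ representing a morphism $\bar f\colon X\to\Sigma^n Y$ in $\mathbf{D}(\Lambda\mbox{-Mod})$, one must verify that $\mathcal{Y}(\Lambda,f)\colon\mathcal{Y}(\Lambda,X)\to\Sigma^n\mathcal{Y}(\Lambda,Y)$ fits into the commutative square with $\eta_X$, $\eta_Y$ and $\bar f$ — this is essentially the statement that $\mathcal{Y}(\Lambda,-)$, being isomorphic to ${\bf i}$ by Proposition~\ref{prop:Y-i}, agrees with the dg-injective resolution functor, so applying it to a morphism and post-composing with $\eta$ reproduces the original morphism up to homotopy. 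This bookkeeping — tracking $f$, $\tilde f$ under $\alpha$, and the comparison of resolutions — is the main obstacle, but it is routine once one uses Remarks~\ref{rem:exp-res} and \ref{rem:c} and the naturality of $\alpha_{X,Y}$ in both variables; alternatively one can cite \cite[Proposition~7.3]{CW} as the excerpt already suggests and only supply the translation from that statement to the present formulation via Proposition~\ref{prop:Y-i}.
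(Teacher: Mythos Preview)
Your approach is correct in outline but differs from the paper's. You propose to compute cohomology on both sides separately---using \eqref{iso:Y-D} for the source and dg-injectivity of $\mathcal{Y}(\Lambda,Y)$ together with the resolution $\eta_X$ for the target---and then verify that the map $\Psi$ induces the identity under these two a priori different identifications with $\Hom_{\mathbf{D}}(X,\Sigma^n Y)$. You correctly flag this compatibility as the crux and call it routine, but you do not actually carry it out; doing so means tracking a closed $f$ through $\alpha_{X,Y}$ and the projective resolution on one side, and through the $\odot$-action followed by $\eta_X$, $\eta_Y$ on the other, and confirming agreement in $\mathbf{D}(\Lambda\mbox{-Mod})$. (A minor inaccuracy: the assignment $f\mapsto(g\mapsto f\odot g)$ is the action on Hom complexes of a dg functor with source $\mathcal{Y}$, not $C_{\rm dg}(\Lambda\mbox{-Mod})$; the latter sees only filtration-degree-zero morphisms, whereas $\Psi$ is defined on all of $\mathcal{Y}(X,Y)$. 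Also, invoking \cite[Proposition~7.3]{CW} as a shortcut is circular here, since the paper presents the present proposition precisely as the incarnation of that result.)

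The paper avoids the compatibility check altogether. It constructs an explicit cochain map $\Phi$ in the reverse direction by composing the quasi-isomorphism $\Hom_\Lambda(\eta_X,\mathcal{Y}(\Lambda,Y))$ (valid because $\mathcal{Y}(\Lambda,Y)$ is dg-injective) with the chain of isomorphisms
\[
\Hom_\Lambda(X,\mathcal{Y}(\Lambda,Y))\simeq\Hom_\Lambda(X,\Hom_\Lambda(\mathbb{B},Y))\simeq\Hom_\Lambda(\mathbb{B}\otimes_\Lambda X,Y)\simeq\mathcal{Y}(X,Y),
\]
and then verifies by a one-line computation with \eqref{equation:yonedaproduct} that $\Phi\circ\Psi$ equals the identity on $\mathcal{Y}(X,Y)$ on the nose. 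Since $\Phi$ is a quasi-isomorphism, so is $\Psi$. This buys a strictly shorter argument: an explicit retraction replaces the abstract square you leave as bookkeeping. Your route, on the other hand, makes more transparent the conceptual reason the result holds---namely that $\mathcal{Y}(\Lambda,-)\simeq\mathbf{i}$ is a dg-injective resolution functor and hence fully faithful at the derived level.
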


\begin{proof}
Denote the above map by $\Psi$. Since $\mathcal{Y}(\Lambda, Y)$ is dg-injective,  the natural map induced by the quasi-isomorphism $\eta_X$ in Remark~\ref{rem:exp-res}
  $$\Hom_\Lambda(\mathcal{Y}(\Lambda, X), \mathcal{Y}(\Lambda, Y)) \longrightarrow  \Hom_\Lambda(X, \mathcal{Y}(\Lambda, Y))$$
is a quasi-isomorphism.  We have a sequence of isomorphisms of complexes
$$\Hom_\Lambda(X, \mathcal{Y}(\Lambda, Y))  \simeq \Hom_\Lambda(X, \Hom_\Lambda(\mathbb{B}, Y)) \simeq \Hom_\Lambda(\mathbb{B} \otimes_\Lambda X, Y)\simeq \mathcal{Y}(X, Y),$$
where the first and third isomorphisms use the isomorphism \eqref{iso:Y-B}, and the second one follows from the Hom-tensor adjunction.  Combining the above quasi-isomorphism with the composite isomorphism, we
 obtain an explicit quasi-isomorphism
 $$\Phi\colon \Hom_\Lambda(\mathcal{Y}(\Lambda, X), \mathcal{Y}(\Lambda, Y)) \longrightarrow   \mathcal{Y}(X, Y).$$
 We observe that $\Phi$ sends $\phi\colon \mathcal{Y}(\Lambda, X)\rightarrow \mathcal{Y}(\Lambda, Y)$ to an element in $\mathcal{Y}(X, Y)$, whose component in $\mathcal{Y}_p(X, Y)$ is described as follows: $$s\bar{a}_{1, p}\otimes x\longmapsto (-1)^{p\cdot |x|}(\phi\circ \eta_X)(x)(s\bar{a}_{1, p}\otimes 1_\Lambda).$$
Here, we abuse  $(\phi\circ \eta_X)(x)\in \mathcal{Y}(\Lambda, Y)$ with its component in $\mathcal{Y}_p(\Lambda, Y)$. Using (\ref{equation:yonedaproduct}), it is direct to verify that $\Phi\circ \Psi$ equals the identity map on $\mathcal{Y}(X, Y)$. Then we are done.
\end{proof}

\section{An explicit quasi-isomorphism}\label{sec:epsilon}

In this section, we study an explicit quasi-isomorphism $\epsilon_X$ and a related triangulated subcategory $\mathcal{K}$ of $\mathbf{K}(\Lambda\mbox{-Mod})$. The results  will be used in Section~\ref{sec:8}.

For each complex $X$, we consider the following explicit map between complexes of $\Lambda$-modules:
\begin{align}\label{equ:epsilon}
\epsilon_X\colon \mathcal{Y}(\Lambda, \Lambda)\otimes_\Lambda X\longrightarrow \mathcal{Y}(\Lambda, X).
\end{align}
For  any $f\in \mathcal{Y}_p(\Lambda, \Lambda)$ and $x\in X$,  the element $\epsilon_X(f\otimes_\Lambda x)\in \mathcal{Y}_p(\Lambda, X)$ is described as follows: it sends $s\bar{a}_{1, p}\otimes b\in s\bar{\Lambda}^{\otimes p}\otimes \Lambda$ to $f(s\bar{a}_{1, p}\otimes b)x\in X$. The map is essentially induced by the composition $\odot$ in $\mathcal{Y}$: for each $x$, we have $\eta_X(x)\in \mathcal{Y}_0(\Lambda, X)$ as in Remark~\ref{rem:exp-res}, and then
$$\epsilon_X(f\otimes_\Lambda x)= (-1)^{|x|\cdot |f|} \eta_X(x)\odot f.$$

We claim that $\epsilon_X$ is a quasi-isomorphism. Indeed, we observe the following commutative triangle in $C(\Lambda\mbox{-Mod})$.
\[\xymatrix{
\mathcal{Y}(\Lambda, \Lambda)\otimes_\Lambda X\ar[rr]^-{\epsilon_X} && \mathcal{Y}(\Lambda, X)\\
& \Lambda\otimes_\Lambda X = X  \ar[lu]^-{\eta_\Lambda \otimes_\Lambda {\rm Id}_X} \ar[ru]_-{\eta_X}
}\]
By Remark~\ref{rem:exp-res-bi}, $\eta_\Lambda$ is a homotopy equivalence on the right side. It follows that $\eta_\Lambda\otimes_\Lambda {\rm Id}_X$ is a quasi-isomorphism. Since $\eta_X$ is also a quasi-isomorphism, we infer the claim.

In general, the quasi-isomorphism $\epsilon_X$ may  not be a homotopy equivalence. Therefore, we consider the following full subcategory of $\mathbf{K}(\Lambda\mbox{-Mod})$
\begin{align}\label{defn:K}
\mathcal{K}:=\{X\in \mathbf{K}(\Lambda\mbox{-Mod})\; |\; \epsilon_X \mbox{ is a homotopy equivalence}\}.
\end{align}
We observe that $\mathcal{K}$ is a thick triangulated subcategory and that $\Lambda \in \mathcal{K}$.

\begin{lem}\label{lem:K}
A complex $X$ lies in $\mathcal{K}$ if and only if $\mathcal{Y}(\Lambda, \Lambda)\otimes_\Lambda X$ is $\mathbf{K}$-injective.
\end{lem}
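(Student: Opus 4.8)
The plan is to deduce the equivalence from two observations already recorded in this section — that $\epsilon_X$ is always a quasi-isomorphism, and that its target $\mathcal{Y}(\Lambda, X)$ is dg-injective by Remark~\ref{rem:c}, hence in particular $\mathbf{K}$-injective — together with two standard facts about $\mathbf{K}$-injective complexes. Write $A_X:=\mathcal{Y}(\Lambda, \Lambda)\otimes_\Lambda X$ and $I_X:=\mathcal{Y}(\Lambda, X)$, so that by the preceding discussion $\epsilon_X\colon A_X\to I_X$ is a quasi-isomorphism with $I_X$ dg-injective.

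For the ``only if'' direction, assume $X\in \mathcal{K}$, so that $\epsilon_X$ is a homotopy equivalence and $A_X$ is homotopy equivalent to the $\mathbf{K}$-injective complex $I_X$. Here I would invoke the homotopy-invariance of $\mathbf{K}$-injectivity: for any acyclic complex $Z$ of $\Lambda$-modules, a homotopy equivalence $A_X\simeq I_X$ induces a homotopy equivalence, hence a quasi-isomorphism, ${\rm Hom}_\Lambda(Z, A_X)\to {\rm Hom}_\Lambda(Z, I_X)$; as the target is acyclic, so is the source, and therefore $A_X$ is $\mathbf{K}$-injective. For the ``if'' direction, assume $A_X$ is $\mathbf{K}$-injective. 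Then $\epsilon_X$ is a quasi-isomorphism between two $\mathbf{K}$-injective complexes, and I would conclude that it is a homotopy equivalence by the standard fact that the quotient functor $\mathbf{K}(\Lambda\mbox{-Mod})\to \mathbf{D}(\Lambda\mbox{-Mod})$ restricts to a fully faithful functor on the full subcategory of $\mathbf{K}$-injective complexes (equivalently, ${\rm Hom}_{\mathbf{K}(\Lambda\mbox{-Mod})}(Y, I)\to {\rm Hom}_{\mathbf{D}(\Lambda\mbox{-Mod})}(Y, I)$ is bijective for $\mathbf{K}$-injective $I$ and arbitrary $Y$); a quasi-isomorphism between $\mathbf{K}$-injective complexes becomes an isomorphism in $\mathbf{D}(\Lambda\mbox{-Mod})$, and is therefore already invertible in $\mathbf{K}(\Lambda\mbox{-Mod})$. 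Thus $\epsilon_X$ is a homotopy equivalence and $X\in \mathcal{K}$.

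There is no substantive obstacle here; the argument is routine bookkeeping with $\mathbf{K}$-injectivity once one has at hand (a) that $\epsilon_X$ is a quasi-isomorphism, (b) that $\mathcal{Y}(\Lambda, X)$ is dg-injective, and (c) the two standard lemmas above, for which one may refer to \cite[Section~3]{Kel94} or \cite[Section~4.3]{Kra22}. The only point meriting a word of care is that the ``only if'' direction must use homotopy-invariance of $\mathbf{K}$-injectivity rather than of dg-injectivity, since $A_X$ need not have injective components; but this matches precisely what the statement asserts.
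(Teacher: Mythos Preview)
Your proof is correct and follows exactly the same approach as the paper's: the ``only if'' direction uses that $\mathcal{Y}(\Lambda, X)$ is dg-injective (hence $\mathbf{K}$-injective) and that $\mathbf{K}$-injectivity is preserved under homotopy equivalence, while the ``if'' direction uses that a quasi-isomorphism between $\mathbf{K}$-injective complexes is a homotopy equivalence. The paper's proof is simply a two-sentence compression of your argument.
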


\begin{proof}
The ``only if" part is clear, since $\mathcal{Y}(\Lambda, X)$ is dg-injective. The ``if" part holds, since any quasi-isomorphism between $\mathbf{K}$-injective complexes is necessarily a homotopy equivalence.
\end{proof}

In what follows, we assume that $\Lambda$ is left noetherian. Then coproducts of injective $\Lambda$-modules are still injective. Recall that a complex $X$ is called cohomologically bounded below, if $H^n(X)=0$ for $n\ll 0$.

\begin{prop}\label{prop:bounded-below}
Assume that $X$ is cohomologically bounded below. Then $\mathbb{B}_{\leq p}\otimes_\Lambda X$ belongs to $\mathcal{K}$ for any $p\geq 0$.
\end{prop}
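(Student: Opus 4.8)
The goal is to show $\mathbb{B}_{\leq p}\otimes_\Lambda X \in \mathcal{K}$ for all $p\geq 0$ when $X$ is cohomologically bounded below. By Lemma~\ref{lem:K}, this is equivalent to showing that $\mathcal{Y}(\Lambda, \Lambda)\otimes_\Lambda (\mathbb{B}_{\leq p}\otimes_\Lambda X)$ is $\mathbf{K}$-injective. My plan is to argue by induction on $p$, using the filtration of $\mathbb{B}$ by the subcomplexes $\mathbb{B}_{<p}$ together with the short exact sequences of complexes of $\Lambda$-$\Lambda$-bimodules
\[
0\longrightarrow \mathbb{B}_{<p}\longrightarrow \mathbb{B}_{\leq p}\longrightarrow \Lambda\otimes (s\bar{\Lambda})^{\otimes p}\otimes \Lambda\longrightarrow 0,
\]
which are componentwise split (as $E$ is semisimple artinian, all the $E$-module pieces are projective), hence remain exact after applying $-\otimes_\Lambda X$ and then $\mathcal{Y}(\Lambda, \Lambda)\otimes_\Lambda-$. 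Since $\mathcal{K}$ is a thick triangulated subcategory of $\mathbf{K}(\Lambda\mbox{-Mod})$ and $\Lambda\in\mathcal{K}$, it suffices to handle the graded pieces $\Lambda\otimes (s\bar{\Lambda})^{\otimes p}\otimes \Lambda \otimes_\Lambda X \simeq (s\bar{\Lambda})^{\otimes p}\otimes X$ (as complexes of left $\Lambda$-modules, induced up from an $E$-complex), and then patch via the triangles.

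\textbf{Reduction to the single graded piece.} The key reduction is therefore: for $X$ cohomologically bounded below, the complex $(s\bar{\Lambda})^{\otimes p}\otimes X$ lies in $\mathcal{K}$. Writing $W := (s\bar{\Lambda})^{\otimes p}$, which is a bounded complex of projective (indeed free, up to summands) $E$-modules concentrated in degree $-p$, the complex $W\otimes X$ is a shift of a complex built from copies of $X$ and is again cohomologically bounded below. So the heart of the matter is to prove:
\begin{center}
\emph{if $X$ is cohomologically bounded below, then $\mathcal{Y}(\Lambda, \Lambda)\otimes_\Lambda X$ is $\mathbf{K}$-injective.}
\end{center}
For this I would use the isomorphism $\mathcal{Y}(\Lambda,\Lambda)\simeq {\rm Hom}_\Lambda(\mathbb{B},\Lambda)$ of complexes of $\Lambda$-$\Lambda$-bimodules (Remark~\ref{rem:c} and Lemma~\ref{lem:injres}) and analyze $\mathrm{Hom}_\Lambda(\mathbb{B},\Lambda)\otimes_\Lambda X$. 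The crucial point is that $\mathbb{B} = \Lambda\otimes T(s\bar\Lambda)\otimes\Lambda$, so $\mathrm{Hom}_\Lambda(\mathbb{B},\Lambda)^{-n}\simeq \mathrm{Hom}(\bar\Lambda^{\otimes n}, \Lambda)$ as an $E$-$\Lambda$-bimodule; crucially, $\Lambda$ is noetherian, so $\mathrm{Hom}(\bar\Lambda^{\otimes n},\Lambda)$ is an injective $\Lambda$-module on the left and a finitely generated-related object on the right — the noetherian hypothesis is exactly what makes arbitrary coproducts of injectives injective, and it is what lets the product-of-injectives in $\mathrm{Hom}_\Lambda(\mathbb{B},-)$ interact well with $-\otimes_\Lambda X$. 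The strategy is to show that for cohomologically bounded below $X$, the natural map $\mathcal{Y}(\Lambda,\Lambda)\otimes_\Lambda X \to \mathcal{Y}(\Lambda, X)$ is in fact a homotopy equivalence by exhibiting both sides as dg-injective resolutions of $X$: the right side always is (Remark~\ref{rem:c}), and for the left side one uses that $\mathcal{Y}(\Lambda,\Lambda)$ is a bounded-above complex of injectives with a bounded-above-friendly structure, so tensoring with a bounded-below complex keeps $\mathbf{K}$-injectivity via a brutal-truncation/limit argument — the totalization of a bicomplex of injectives that is bounded below in one direction is $\mathbf{K}$-injective.

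\textbf{Main obstacle.} The delicate point is the interaction between the infinite product defining $\mathcal{Y}(\Lambda,\Lambda)$ (equivalently the componentwise infinite product in ${\rm Hom}_\Lambda(\mathbb{B},-)$) and the tensor $-\otimes_\Lambda X$: tensor does not commute with infinite products in general, so $\mathcal{Y}(\Lambda,\Lambda)\otimes_\Lambda X$ is \emph{not} literally $\mathcal{Y}(\Lambda,X)$, and one must control the discrepancy. This is precisely why the boundedness hypothesis on $X$ enters: when $X$ is cohomologically bounded below one can replace $X$ by a dg-projective resolution bounded below (the truncated bar resolution), and then in each cohomological degree only finitely many filtration-degrees contribute, so the product becomes effectively finite and the map $\epsilon_X$ becomes a quasi-isomorphism between $\mathbf{K}$-injective complexes, hence a homotopy equivalence. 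Thus the real work is a careful degreewise bookkeeping showing that, after replacing $X$ by $\mathbb{B}_{\leq q}\otimes_\Lambda X$ for all $q$ and passing to the colimit — which is legitimate inside $\mathcal{K}$ since $\mathcal{K}$ is thick and closed under the relevant operations — the complex $\mathcal{Y}(\Lambda,\Lambda)\otimes_\Lambda X$ has $\mathbf{K}$-injective components fitting into a bounded-below bicomplex, and is therefore $\mathbf{K}$-injective; I expect this truncation-and-bookkeeping step to be the main obstacle, and the noetherian hypothesis plus semisimplicity of $E$ to be the two facts doing all the heavy lifting.
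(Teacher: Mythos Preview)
Your reduction to the graded pieces $\mathbb{B}^{-q}\otimes_\Lambda X$ via the filtration and the fact that $\mathcal{K}$ is a triangulated subcategory is exactly how the paper proceeds. The divergence---and the gap---appears at the next step.

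You declare the ``heart of the matter'' to be the general claim that $X$ cohomologically bounded below implies $\mathcal{Y}(\Lambda,\Lambda)\otimes_\Lambda X$ is $\mathbf{K}$-injective, i.e.\ that every such $X$ already lies in $\mathcal{K}$. This is stronger than what is needed and you do not actually prove it. Your sketch appeals to a ``bicomplex of injectives bounded below in one direction'', but for an arbitrary complex $X$ the components $\mathcal{Y}(\Lambda,\Lambda)^n\otimes_\Lambda X^m$ have no reason to be injective $\Lambda$-modules: an injective left module tensored over $\Lambda$ with an arbitrary module need not be injective. The later suggestion to replace $X$ by $\mathbb{B}_{\leq q}\otimes_\Lambda X$ is circular (that is precisely the class of complexes you are trying to handle), and replacing $X$ by a quasi-isomorphic complex does not help either, since $\mathbf{K}$-injectivity is not invariant under quasi-isomorphism.

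The paper avoids this overreach by staying with the graded pieces themselves. The crucial observation is that $\mathbb{B}^{-q}\otimes_\Lambda X\simeq \Lambda\otimes_E V$ with $V=(s\bar\Lambda)^{\otimes q}\otimes X$, so that
\[
\mathcal{Y}(\Lambda,\Lambda)\otimes_\Lambda(\mathbb{B}^{-q}\otimes_\Lambda X)\;\simeq\;\mathcal{Y}(\Lambda,\Lambda)\otimes_E V,
\]
and now every component \emph{is} injective (a summand of a coproduct of the injective modules $\mathcal{Y}(\Lambda,\Lambda)^n$, using that $\Lambda$ is noetherian). The remaining issue is $\mathbf{K}$-injectivity, and here the paper uses a move you are missing entirely: since $E$ is semisimple, the $E$-complex $V$ is homotopy equivalent to $\bigoplus_i \Sigma^{-i}H^i(V)$, and the hypothesis on $X$ forces $H^i(V)=0$ for $i\ll 0$. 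Thus $\mathcal{Y}(\Lambda,\Lambda)\otimes_E V$ is homotopy equivalent to a \emph{bounded-below} complex of injectives, hence dg-injective. This splitting-into-cohomology step is the genuine input from semisimplicity of $E$; your proposal invokes semisimplicity only for projectivity of pieces, which is not where the leverage lies.
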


\begin{proof}
We claim  $\mathcal{Y}(\Lambda, \Lambda)\otimes_\Lambda (\mathbb{B}^{-q}\otimes_\Lambda X)$ is $\mathbf{K}$-injective for each $q\geq 0$. Here, $\mathbb{B}^{-q} = \Lambda\otimes (s\bar{\Lambda})^{\otimes q}\otimes \Lambda$. Then by Lemma~\ref{lem:K}, each $\mathbb{B}^{-q}\otimes_\Lambda X$ belongs to $\mathcal{K}$. We observe that  $\mathbb{B}_{\leq p}\otimes_\Lambda X$ is an iterated extension of those complexes $\mathbb{B}^{-q}\otimes_\Lambda X$ in $\mathbf{K}(\Lambda\mbox{-Mod})$,  and recall that $\mathcal{K}$ is a triangulated subcategory of $\mathbf{K}(\Lambda\mbox{-Mod})$. Then we deduce the required statement.

We will actually prove that each $\mathcal{Y}(\Lambda, \Lambda)\otimes_\Lambda (\mathbb{B}^{-q}\otimes_\Lambda X)$ is dg-injective.  We have a canonical isomorphism of complexes
 $$\mathcal{Y}(\Lambda, \Lambda)\otimes_\Lambda (\mathbb{B}^{-q}\otimes_\Lambda X)\simeq \mathcal{Y}(\Lambda, \Lambda)\otimes ((s\bar{\Lambda})^{\otimes q}\otimes X).$$
Since $E$ is semisimple artianin and coproducts of injective $\Lambda$-modules are injective, we infer that $\mathcal{Y}(\Lambda, \Lambda)\otimes_\Lambda (\mathbb{B}^{-q}\otimes_\Lambda X)$ is a complex of injective $\Lambda$-modules.

We observe that as a complex of $E$-modules,  $V:= (s\bar{\Lambda})^{\otimes q}\otimes X$ is homotopy equivalent to $\bigoplus_{i\in \mathbb{Z}} \Sigma^{-i} H^i(V)$. Therefore, we deduce a homotopy equivalence
$$\mathcal{Y}(\Lambda, \Lambda)\otimes_\Lambda (\mathbb{B}^{-q}\otimes_\Lambda X)\simeq \bigoplus_{i\in \mathbb{Z}} \Sigma^{-i} \mathcal{Y}(\Lambda, \Lambda)\otimes  H^i(V).$$
Since $X$ is cohomologically bounded below, we infer that $H^i(V)=0$ for $i\ll 0$. This implies that the complex $\bigoplus_{i\in \mathbb{Z}} \Sigma^{-i} \mathcal{Y}(\Lambda, \Lambda)\otimes H^i(V)$ is bounded below. Then the claim follows, as any bounded below complex of injective modules is dg-injective.
\end{proof}

The proof of the following result is similar to the one in \cite[Proposition~5.2]{Ric}.

\begin{lem}\label{lem:generating}
Let $(P^n)_{n\in \mathbb{Z}}$ be a family of projective $\Lambda$-modules. Then the following canonical embedding of complexes of $\Lambda$-modules
$${\rm emb}\colon \bigoplus_{n\in \mathbb{Z}} \Sigma^{-n} \mathcal{Y}(\Lambda, P^n)\hookrightarrow \prod_{n\in \mathbb{Z}} \Sigma^{-n}\mathcal{Y}(\Lambda, P^n)$$
is a homotopy equivalence if and only if for each finitely generated $\Lambda$-module $M$ and $d\in \mathbb{Z}$,  the set $\{ n\in \mathbb{Z} \; |\; {\rm Ext}_\Lambda^{d-n}(M, P^n)\neq 0\}$ is finite.
\end{lem}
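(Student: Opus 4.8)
My plan is to deduce the equivalence from Krause's theorem that, for a left noetherian ring $\Lambda$, the category $\mathbf{K}(\Lambda\mbox{-Inj})$ is compactly generated. As a preliminary remark, ${\rm emb}$ is automatically a quasi-isomorphism: products and coproducts of modules are exact and each $\mathcal{Y}(\Lambda, P^n)$ is a resolution of the module $P^n$, so both the source and the target of ${\rm emb}$ have cohomology in each degree $n$ equal to $P^n$, with ${\rm emb}$ inducing the identity there. The content of the lemma is thus the refinement of ``quasi-isomorphism'' to ``homotopy equivalence''.

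The first step is to observe that ${\rm emb}$ is a morphism in $\mathbf{K}(\Lambda\mbox{-Inj})$: by Remark~\ref{rem:c} each $\mathcal{Y}(\Lambda, P^n)$, hence each $\Sigma^{-n}\mathcal{Y}(\Lambda, P^n)$, is dg-injective; since $\Lambda$ is left noetherian, a coproduct of injectives is injective, so the source of ${\rm emb}$ is a complex of injectives, and the target always is. As $\mathbf{K}(\Lambda\mbox{-Inj})$ is a full subcategory of $\mathbf{K}(\Lambda\mbox{-Mod})$, the map ${\rm emb}$ is a homotopy equivalence if and only if it is an isomorphism in $\mathbf{K}(\Lambda\mbox{-Inj})$. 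By \cite{Kra}, this category is compactly generated by the set $\{{\bf i}(M)\mid M\in \Lambda\mbox{-mod}\}$ of dg-injective resolutions of finitely generated modules, and all of these objects are compact. Consequently ${\rm emb}$ is an isomorphism in $\mathbf{K}(\Lambda\mbox{-Inj})$ if and only if ${\rm Hom}_{\mathbf{K}(\Lambda\mbox{-Inj})}({\bf i}(M), \Sigma^d\,{\rm emb})$ is bijective for every $M\in \Lambda\mbox{-mod}$ and every $d\in\mathbb{Z}$.

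The second step is to compute the two sides of this last map. Since ${\bf i}(M)$ is compact, ${\rm Hom}_{\mathbf{K}(\Lambda\mbox{-Inj})}({\bf i}(M), -)$ distributes over the coproduct, so the source equals $\bigoplus_{n} {\rm Hom}_{\mathbf{K}(\Lambda\mbox{-Inj})}({\bf i}(M), \Sigma^{d-n}\mathcal{Y}(\Lambda, P^n))$; by the universal property of the product the target equals the corresponding $\prod_{n}$, and $\Sigma^d\,{\rm emb}$ induces the canonical inclusion of the coproduct into the product. For each $n$, the complex $\Sigma^{d-n}\mathcal{Y}(\Lambda, P^n)$ is dg-injective and, via the quasi-isomorphism $\eta_{P^n}$ of Remark~\ref{rem:exp-res}, is isomorphic in $\mathbf{D}(\Lambda\mbox{-Mod})$ to $\Sigma^{d-n}P^n$; combining this with ${\bf i}(M)\cong M$ in $\mathbf{D}(\Lambda\mbox{-Mod})$ and the fact that morphisms into a $\mathbf{K}$-injective complex are computed in the derived category, one gets
$$ {\rm Hom}_{\mathbf{K}(\Lambda\mbox{-Inj})}({\bf i}(M), \Sigma^{d-n}\mathcal{Y}(\Lambda, P^n)) \;\cong\; {\rm Hom}_{\mathbf{D}(\Lambda\mbox{-Mod})}(M, \Sigma^{d-n}P^n) \;=\; {\rm Ext}^{d-n}_\Lambda(M, P^n). $$
Hence ${\rm Hom}_{\mathbf{K}(\Lambda\mbox{-Inj})}({\bf i}(M), \Sigma^d\,{\rm emb})$ is identified with the canonical inclusion $\bigoplus_{n} {\rm Ext}^{d-n}_\Lambda(M, P^n)\hookrightarrow \prod_{n} {\rm Ext}^{d-n}_\Lambda(M, P^n)$, which is bijective precisely when all but finitely many of the groups ${\rm Ext}^{d-n}_\Lambda(M, P^n)$ vanish, that is, when $\{n\in\mathbb{Z}\mid {\rm Ext}^{d-n}_\Lambda(M, P^n)\neq 0\}$ is finite. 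Letting $M$ and $d$ range over all finitely generated modules and all integers gives the claimed equivalence.

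The point demanding the most care is the asymmetric treatment of the two sides in the second step: the coproduct can be pulled out of ${\rm Hom}$ only because ${\bf i}(M)$ is compact, whereas the product comes out for free from its universal property, and this asymmetry is exactly what manufactures the finiteness condition. One must therefore invoke \cite{Kra} with care, using both that $\mathbf{K}(\Lambda\mbox{-Inj})$ is compactly generated by the objects ${\bf i}(M)$ and that these objects are themselves compact. This is the shape of the argument of \cite[Proposition~5.2]{Ric}.
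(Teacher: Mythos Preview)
Your proof is correct and follows essentially the same route as the paper's: both reduce the question to testing $\mathrm{emb}$ against the compact generators ${\bf i}(M)$ of $\mathbf{K}(\Lambda\mbox{-Inj})$, use compactness to commute ${\rm Hom}$ past the coproduct, and then identify the resulting terms with ${\rm Ext}^{d-n}_\Lambda(M,P^n)$. The only cosmetic difference is that the paper phrases the middle step in terms of Hom-complexes and their cohomology rather than Hom groups in the homotopy category, and omits your preliminary remark that $\mathrm{emb}$ is automatically a quasi-isomorphism.
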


\begin{proof}
In this proof, we identify $\mathcal{Y}(\Lambda, -)$ with ${\bf i}$; see Proposition~\ref{prop:Y-i}.  We will identify ${\rm Ext}_\Lambda^{d-n}(M, P^n)$ with $H^d (\Sigma^{-n}{\rm Hom}_\Lambda({\bf i}(M), {\bf i}(P^n)))\simeq H^d ({\rm Hom}_\Lambda({\bf i}(M), \Sigma^{-n}{\bf i}(P^n)))$.  We will view ``emb" as  a morphism in $\mathbf{K}(\Lambda\mbox{-Inj})$.

Recall from \cite[Proposition~2.3]{Kra} that $\mathbf{K}(\Lambda\mbox{-Inj})$ is generated by ${\bf i}(M)$ for all finitely generated $\Lambda$-modules $M$. Therefore, the above embedding is a homotopy equivalence if and only if the following canonical map
 $${\rm Hom}_\Lambda ({\bf i}(M), \bigoplus_{n\in \mathbb{Z}} \Sigma^{-n}{\bf i}(P^n))\longrightarrow {\rm Hom}_\Lambda ({\bf i}(M),  \prod_{n\in \mathbb{Z}} \Sigma^{-n}{\bf i}(P^n))$$
 is a quasi-isomorphism for each $M$. Since each ${\bf i}(M)$ is compact, that is, the Hom functor ${\rm Hom}({\bf i}(M), -)$  commutes with infinite coproducts, the canonical embedding
 $$\bigoplus_{n\in \mathbb{Z}} {\rm Hom}_\Lambda ({\bf i}(M), \Sigma^{-n}{\bf i}(P^n))\longrightarrow {\rm Hom}_\Lambda ({\bf i}(M), \bigoplus_{n\in \mathbb{Z}} \Sigma^{-n}{\bf i}(P^n))$$
is always a quasi-isomorphism. We conclude that ``emb'' is a homotopy equivalence if and only if the following canonical map from a coproduct to  a product
 $$\bigoplus_{n\in \mathbb{Z}} {\rm Hom}_\Lambda ({\bf i}(M), \Sigma^{-n}{\bf i}(P^n))\longrightarrow \prod_{n\in \mathbb{Z}} {\rm Hom}_\Lambda ({\bf i}(M),  \Sigma^{-n}{\bf i}(P^n))$$
 is a quasi-isomorphism for each $M$. However, the latter condition is equivalent to the following finiteness one:  for each integer $d$, there are only finitely many $n$ with $H^d({\rm Hom}_\Lambda({\bf i}(M), \Sigma^{-n}{\bf i}(P^n))\neq 0$. Then we are done.
\end{proof}

We refer to  \eqref{defn:K} for the category $\mathcal{K}$.

\begin{prop}\label{prop:coprod}
Assume that $\Lambda$ is left noetherian. Then the following statements are equivalent.
\begin{enumerate}
\item Any complex $P$ of projective $\Lambda$-modules with zero differential belongs to $\mathcal{K}$.
\item The complex $\bigoplus_{n\in \mathbb{Z}} \Sigma^n(\Lambda)$ belongs to $\mathcal{K}$.
\item The complex $\bigoplus_{n\in \mathbb{Z}} \Sigma^n\mathcal{Y}(\Lambda, \Lambda)$ is dg-injective.
\item For each finitely generated $\Lambda$-module $M$, the set $\{ n\geq 0 \; |\; {\rm Ext}_\Lambda^n(M, \Lambda)\neq 0\}$ is finite.
\end{enumerate}
\end{prop}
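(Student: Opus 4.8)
The plan is to prove the cycle of implications $(4)\Rightarrow(3)\Rightarrow(2)\Rightarrow(1)\Rightarrow(4)$, with the bridge between the ``$\Ext$-finiteness'' side and the ``$\mathcal{K}$/dg-injectivity'' side supplied by Lemma~\ref{lem:generating} and Lemma~\ref{lem:K}. First I would record that, by Remark~\ref{rem:c} and Proposition~\ref{prop:Y-i}, $\mathcal{Y}(\Lambda,P)\simeq {\bf i}(P)$ for a projective module $P$, and that tensoring commutes with the relevant sums: for a complex $P=(P^n)_{n\in\mathbb{Z}}$ with zero differential one has a canonical isomorphism of complexes $\mathcal{Y}(\Lambda,\Lambda)\otimes_\Lambda P\simeq \bigoplus_{n\in\mathbb{Z}}\Sigma^{-n}\bigl(\mathcal{Y}(\Lambda,\Lambda)\otimes_\Lambda P^n\bigr)\simeq\bigoplus_{n\in\mathbb{Z}}\Sigma^{-n}\mathcal{Y}(\Lambda,P^n)$, using that $\mathcal{Y}(\Lambda,\Lambda)$ is projective as a right $\Lambda$-module (it is a complex of such) and hence the tensor product passes the coproduct, together with $\mathcal{Y}(\Lambda,\Lambda)\otimes_\Lambda P^n\simeq\mathcal{Y}(\Lambda,P^n)$ from the bimodule form of $\alpha$. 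With this identification in hand, $P\in\mathcal{K}$ is, by Lemma~\ref{lem:K}, exactly the statement that $\bigoplus_{n}\Sigma^{-n}\mathcal{Y}(\Lambda,P^n)$ is $\mathbf{K}$-injective; since each summand is dg-injective and the product $\prod_n\Sigma^{-n}\mathcal{Y}(\Lambda,P^n)$ is dg-injective (a product of dg-injectives is dg-injective, and over a left noetherian ring a product of injective modules is injective), this holds iff the canonical embedding ``emb'' of Lemma~\ref{lem:generating} is a homotopy equivalence.

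The implication $(2)\Rightarrow(1)$ is then the substantive reduction: given $(2)$, I want to conclude that \emph{every} complex $P$ of projectives with zero differential lies in $\mathcal{K}$. Using the translation above, $(2)$ says $\bigoplus_{n}\Sigma^{-n}\mathcal{Y}(\Lambda,\Lambda)$ is $\mathbf{K}$-injective, equivalently (Lemma~\ref{lem:generating} with $P^n=\Lambda$) that for every finitely generated $M$ and every $d$ only finitely many $n$ have $\Ext_\Lambda^{d-n}(M,\Lambda)\neq 0$; re-indexing, this is precisely condition $(4)$. So $(2)\Leftrightarrow(4)$ directly from Lemma~\ref{lem:generating}. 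Conversely, $(4)$ feeds back into Lemma~\ref{lem:generating} for an \emph{arbitrary} family $(P^n)$ of projectives: each $P^n$ is a direct summand of a free module $\Lambda^{(I_n)}$, and $\Ext_\Lambda^{d-n}(M,\Lambda^{(I_n)})\simeq \Ext_\Lambda^{d-n}(M,\Lambda)^{(I_n)}$ because $M$ is finitely generated (so $\Ext_\Lambda^{d-n}(M,-)$ commutes with coproducts when $\Lambda$ is left noetherian), hence $\Ext_\Lambda^{d-n}(M,P^n)\neq 0$ forces $\Ext_\Lambda^{d-n}(M,\Lambda)\neq 0$; by $(4)$ this happens for only finitely many values of $d-n$, hence for fixed $d$ only finitely many $n$. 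Thus Lemma~\ref{lem:generating} gives ``emb'' a homotopy equivalence for every such family, which via the translation of the first paragraph is exactly $(1)$. This establishes $(1)\Leftrightarrow(4)$, and together with $(2)\Leftrightarrow(4)$ closes the loop $(1)\Leftrightarrow(2)\Leftrightarrow(4)$.

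It remains to fold in $(3)$. Condition $(3)$ says $\bigoplus_{n\in\mathbb{Z}}\Sigma^n\mathcal{Y}(\Lambda,\Lambda)$ is dg-injective; replacing $n$ by $-n$ this is the same complex (up to reindexing the coproduct) as $\bigoplus_{n\in\mathbb{Z}}\Sigma^{-n}\mathcal{Y}(\Lambda,\Lambda)$, which by the first paragraph is $\mathcal{Y}(\Lambda,\Lambda)\otimes_\Lambda\bigl(\bigoplus_n\Sigma^n\Lambda\bigr)$; moreover every component of this complex is an injective $\Lambda$-module (a coproduct of copies of injectives, which is injective since $\Lambda$ is left noetherian), so ``dg-injective'' and ``$\mathbf{K}$-injective'' coincide here, and Lemma~\ref{lem:K} identifies $(3)$ with $\bigl(\bigoplus_n\Sigma^n\Lambda\bigr)\in\mathcal{K}$, i.e.\ with $(2)$. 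Hence $(2)\Leftrightarrow(3)$, completing the equivalence of all four. The main obstacle I anticipate is purely bookkeeping: getting the signs and the $\Sigma$-index conventions consistent when moving $\mathcal{Y}(\Lambda,\Lambda)\otimes_\Lambda-$ across coproducts and matching up with the precise indexing in Lemma~\ref{lem:generating}; the conceptual content is entirely carried by Lemmas~\ref{lem:K} and \ref{lem:generating} plus the left-noetherian hypothesis (used twice: coproducts of injectives are injective, and finitely generated modules have $\Ext$ commuting with coproducts in the second variable).
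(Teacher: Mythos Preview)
Your approach is essentially the paper's: the same reduction via Lemma~\ref{lem:K} and Lemma~\ref{lem:generating}, with the same logical flow through $(2)\Leftrightarrow(3)$, $(3)\Leftrightarrow(4)$, and $(4)\Rightarrow(1)$. One point, however, needs correction. The identification $\mathcal{Y}(\Lambda,\Lambda)\otimes_\Lambda P^n\simeq\mathcal{Y}(\Lambda,P^n)$ is \emph{not} an isomorphism of complexes in general, nor does it follow from ``the bimodule form of $\alpha$'': the relevant map is precisely $\epsilon_{P^n}$ from (\ref{equ:epsilon}), and the entire purpose of the category $\mathcal{K}$ is to record when this map is a homotopy equivalence. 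Concretely, the degree-$k$ component is the natural map $\Hom(\bar\Lambda^{\otimes k}\otimes\Lambda,\Lambda)\otimes_\Lambda P^n\to\Hom(\bar\Lambda^{\otimes k}\otimes\Lambda,P^n)$, which fails to be an isomorphism once $P^n$ is not finitely generated. What \emph{is} true, and what the paper invokes, is that both complexes are bounded-below complexes of injective modules quasi-isomorphic to $P^n$ (for the left-hand side: $P^n$ is a summand of a free module, so $\mathcal{Y}(\Lambda,\Lambda)\otimes_\Lambda P^n$ is a summand of a coproduct of copies of $\mathcal{Y}(\Lambda,\Lambda)$, hence consists of injectives by noetherianity); thus both are injective resolutions of $P^n$ and therefore homotopy equivalent. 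Since every use you make of this identification is homotopy-invariant, your argument goes through once the justification is replaced.

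A second, minor omission: to pass from ``$\bigoplus_n\Sigma^{-n}\mathcal{Y}(\Lambda,P^n)$ is $\mathbf{K}$-injective'' to ``emb is a homotopy equivalence'' you also need emb to be a quasi-isomorphism a priori. The paper notes this explicitly (in the $(3)\Leftrightarrow(4)$ step): it holds because each $\mathcal{Y}(\Lambda,P^n)$ has cohomology concentrated in a single degree, so in every cohomological degree the coproduct and product coincide.
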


\begin{proof}
 We observe the following isomorphism of complexes of injective modules
$$\bigoplus_{n\in \mathbb{Z}} \Sigma^n\mathcal{Y}(\Lambda, \Lambda)\simeq \mathcal{Y}(\Lambda, \Lambda)\otimes_\Lambda (\bigoplus_{n\in \mathbb{Z}} \Sigma^n(\Lambda)).$$
Then ``$(2) \Leftrightarrow (3)$'' follows from Lemma~\ref{lem:K}.

For ``$(3)\Leftrightarrow (4)$", we first observe that the canonical embedding
$$\bigoplus_{n\in \mathbb{Z}} \Sigma^n \mathcal{Y}(\Lambda, \Lambda)\hookrightarrow \prod_{n\in \mathbb{Z}} \Sigma^n \mathcal{Y}(\Lambda, \Lambda)$$
is a quasi-isomorphism, as $\mathcal{Y}(\Lambda, \Lambda)$ is quasi-isomorphic to $\Lambda$. Moreover, $\prod_{n\in \mathbb{Z}} \Sigma^n \mathcal{Y}(\Lambda, \Lambda)$ is a dg-injective. Therefore, the condition in (3) is equivalent to the one that the above embedding is a homotopy equivalence. Then the implications follow from Lemma~\ref{lem:generating}.

The implication ``$(1)\Rightarrow (2)$" is trivial. It remains to show ``$(4)\Rightarrow (1)$". The condition in (4) implies that for each finitely generated $\Lambda$-module $M$ and $d\in \mathbb{Z}$, the set
$$\{ n\in \mathbb{Z} \; |\; {\rm Ext}_\Lambda^{d-n}(M, P^n)\neq 0\}$$
 is finite. By the homotopy equivalence in Lemma~\ref{lem:generating}, we infer that the following infinite coproduct
$$\bigoplus_{n \in \mathbb{Z}} \Sigma^{-n}\mathcal{Y}(\Lambda, P^n)$$
is $\mathbf{K}$-injective. We observe a homotopy equivalence
$$\Sigma^{-n}\mathcal{Y}(\Lambda, P^n)\simeq \Sigma^{-n}\mathcal{Y}(\Lambda, \Lambda)\otimes_\Lambda P^n,$$
as both complexes are injective resolutions of the module $P^n$. We conclude that the following complex
$$\mathcal{Y}(\Lambda, \Lambda)\otimes_\Lambda P\simeq \bigoplus_{n\in \mathbb{Z}} \Sigma^{-n}\mathcal{Y}(\Lambda, \Lambda)\otimes_\Lambda P^n $$
is $\mathbf{K}$-injective. By Lemma~\ref{lem:K}, we infer (1).
\end{proof}

We mention that if the injective dimension of $\Lambda$ (i.e.\ the selfinjective dimension) on the left side, denote by ${\rm inj.dim}(_\Lambda\Lambda)$, is finite, then the equivalent conditions in Proposition~\ref{prop:coprod} hold. If $\Lambda$ is commutative, then these conditions are actually  equivalent to the condition that the localization of $\Lambda$ at any prime ideal has finite selfinjective dimension; see \cite[Theorem~I]{Goto}.  If $\Lambda$ is left artinian, these conditions also imply the finiteness of ${\rm inj.dim}(_\Lambda\Lambda)$.

Recall that $\Lambda$ is \emph{Gorenstein} provided that $\Lambda$ is two-sided noetherian and  has finite selfinjective dimension on each side. Therefore, the conditions in Proposition~\ref{prop:coprod} hold for any Gorenstein ring.

\begin{prop}\label{prop:Gorenstein}
Assume that $\Lambda$ satisfies the conditions in Proposition~\ref{prop:coprod}. Then for any complex $X$ and $p\geq 0$, the complex $\mathbb{B}_{\leq p}\otimes_\Lambda X$ belongs to $\mathcal{K}$.
\end{prop}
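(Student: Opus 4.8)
The plan is to rerun the proof of Proposition~\ref{prop:bounded-below}, replacing its ``bounded below $\Rightarrow$ dg-injective'' step by an appeal to Proposition~\ref{prop:coprod}(1); this is exactly the input that lets us drop the cohomological boundedness of $X$.

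First I would carry out the same reduction as in Proposition~\ref{prop:bounded-below}. With $\mathbb{B}^{-q}=\Lambda\otimes(s\bar\Lambda)^{\otimes q}\otimes\Lambda$, the differential $d_{\rm ex}$ of $\mathbb{B}$ strictly lowers the tensor length, so $\mathbb{B}_{\leq 0}\otimes_\Lambda X\subseteq\mathbb{B}_{\leq 1}\otimes_\Lambda X\subseteq\cdots\subseteq\mathbb{B}_{\leq p}\otimes_\Lambda X$ is a finite chain of subcomplexes whose inclusions are componentwise split (as observed in the proof of Lemma~\ref{lem:projres}) and whose successive quotients are the stalk complexes $\Sigma^{q}(\mathbb{B}^{-q}\otimes_\Lambda X)$ for $0\leq q\leq p$. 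Hence $\mathbb{B}_{\leq p}\otimes_\Lambda X$ is a finite iterated extension in $\mathbf{K}(\Lambda\mbox{-Mod})$ of the complexes $\mathbb{B}^{-q}\otimes_\Lambda X$; since $\mathcal{K}$ is a thick triangulated subcategory of $\mathbf{K}(\Lambda\mbox{-Mod})$, in particular closed under shifts and extensions, it suffices to show that $\mathbb{B}^{-q}\otimes_\Lambda X\in\mathcal{K}$ for each $q$.

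To prove that, I would use the canonical isomorphism $\mathbb{B}^{-q}\otimes_\Lambda X\cong\Lambda\otimes V$ of complexes of $\Lambda$-modules, where $V:=(s\bar\Lambda)^{\otimes q}\otimes X$ is regarded as a complex of $E$-modules and $\Lambda$ acts through its left tensor factor. Since $E$ is semisimple artinian, $V$ is homotopy equivalent, as a complex of $E$-modules, to $\bigoplus_{i\in\mathbb{Z}}\Sigma^{-i}H^i(V)$; applying the additive functor $\Lambda\otimes-$, which commutes with shifts and coproducts, yields a homotopy equivalence of complexes of $\Lambda$-modules
\[
\mathbb{B}^{-q}\otimes_\Lambda X\;\simeq\;\bigoplus_{i\in\mathbb{Z}}\Sigma^{-i}\bigl(\Lambda\otimes H^i(V)\bigr).
\]
Each $\Lambda\otimes H^i(V)$ is a projective $\Lambda$-module, being a direct summand of a free one since $H^i(V)$ is a projective $E$-module; hence the right-hand side is a complex of projective $\Lambda$-modules with zero differential, and so it belongs to $\mathcal{K}$ by Proposition~\ref{prop:coprod}(1). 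As $\mathcal{K}$ is closed under homotopy equivalence, $\mathbb{B}^{-q}\otimes_\Lambda X\in\mathcal{K}$, and the reduction above gives $\mathbb{B}_{\leq p}\otimes_\Lambda X\in\mathcal{K}$.

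I do not expect any genuine obstacle: granting Proposition~\ref{prop:coprod}, the argument is essentially formal, a strengthening of the homogeneous case appearing inside Proposition~\ref{prop:bounded-below}. The only points that deserve a sentence of care are that the tensor-length filtration really is by subcomplexes with componentwise split inclusions --- which follows from the explicit shape of $d_{\rm ex}$ and is already used in the proof of Lemma~\ref{lem:projres} --- and that $\mathcal{K}$ is stable under shifts, extensions and homotopy equivalences, i.e.\ that it is a thick triangulated subcategory of $\mathbf{K}(\Lambda\mbox{-Mod})$.
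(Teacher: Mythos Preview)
Your proposal is correct and follows essentially the same approach as the paper: reduce to showing $\mathbb{B}^{-q}\otimes_\Lambda X\in\mathcal{K}$ via the filtration argument from Proposition~\ref{prop:bounded-below}, then use semisimplicity of $E$ to see that $\mathbb{B}^{-q}\otimes_\Lambda X\simeq\Lambda\otimes V$ is homotopy equivalent to a complex of projective $\Lambda$-modules with zero differential, and conclude by Proposition~\ref{prop:coprod}(1). One small wording slip: the successive quotients $\Sigma^{q}(\mathbb{B}^{-q}\otimes_\Lambda X)$ are not ``stalk complexes'' in general (they carry the differential coming from $X$), but this does not affect the argument.
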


\begin{proof}
As in the first paragraph in the proof of Proposition~\ref{prop:bounded-below}, it suffices claim that each $\mathbb{B}^{-q}\otimes_\Lambda X$ belongs to $\mathcal{K}$.

By the isomorphism $\mathbb{B}^{-q}\otimes_\Lambda X\simeq \Lambda\otimes ((s\bar{\Lambda})^{\otimes q}\otimes X)$ and the semisimplicity of $E$, we infer that $\mathbb{B}^{-q}\otimes_\Lambda X$ is homotopy equivalent to a complex $P$ of projective $\Lambda$-modules with zero differential. Then the claim follows from Proposition~\ref{prop:coprod}(1).
\end{proof}

\section{Noncommutative differential forms}

In this section, we construct an explicit homotopy inverse $\iota_X$ in the Yoneda dg category $\mathcal Y$ of the dg-projection resolution $\varepsilon \otimes_\Lambda {\rm Id}_X\colon \mathbb B \otimes X \to X$, where the latter is viewed as an element in $\mathcal Y_0(\mathbb B \otimes X, X)$ via the inclusion \eqref{align:naturalinclusion}; see (\ref{equ:iotaX}). This homotopy inverse fits into the commutative diagram in Proposition~\ref{prop:Omega-bar}, which involves noncommutative differential forms and the truncated bar resolutions.

Let $X$ be a complex of $\Lambda$-modules. Following \cite[Section~8]{CW}, the complex of \emph{$X$-valued $E$-relative noncommutative differential $1$-forms} is defined by
$$\Omega_{{\rm nc}}(X)=s\bar{\Lambda}\otimes X,$$
 whose differential is given by $d(s\bar{a}\otimes x)=-s\bar{a}\otimes d_X(x)$, and whose grading is given such that $|s\bar{a}\otimes x|=|x|-1$.  The left  $\Lambda$-action is given by the following nontrivial rule:
\begin{align}\label{equ:nontri-rule}
b\blacktriangleright (s\bar{a}\otimes x)=s\overline{ba}\otimes x-s\bar{b}\otimes ax.
\end{align}
Indeed, $\Omega_{\rm nc}\colon \mathcal{Y}\rightarrow \mathcal{Y}$ is a dg endofunctor,  which sends a morphism $f\in \mathcal{Y}_p(X, Y)$ to the following morphism in $\mathcal{Y}_{p}(\Omega_{\rm nc}(X), \Omega_{\rm nc}(Y))$:
 $$   (s\bar{\Lambda})^{\otimes p}\otimes \Omega_{\rm nc}(X) = (s\bar{\Lambda})^{\otimes (p+1)}\otimes X \xrightarrow{\mathrm{Id}_{s\bar{\Lambda}}\otimes f}  s\bar{\Lambda}\otimes Y=\Omega_{\rm nc}(Y).$$
 We mention that the study of noncommutative differential forms goes back to \cite{CQ}.

\begin{lem}\label{lem:iso-Omega}
For each $p\geq 0$, we have a canonical  isomorphism
$$\mathbb{B}_{\geq p}\otimes_\Lambda \Omega_{\rm nc}(X)\simeq \mathbb{B}_{\geq p+1}\otimes_\Lambda X$$
of complexes of $\Lambda$-modules, sending $(a_0\otimes s\bar{a}_{1, n}\otimes 1)\otimes_\Lambda (s\bar{a}_{n+1}\otimes x)$ to $(a_0\otimes s\bar{a}_{1, n+1}\otimes 1)\otimes_\Lambda x$ for $n\geq p$.
\end{lem}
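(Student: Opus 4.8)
The plan is to write down the candidate map in both directions explicitly and check that they are mutually inverse cochain maps of complexes of $\Lambda$-modules. First I would unwind the definitions: an element of $\mathbb{B}_{\geq p}\otimes_\Lambda \Omega_{\rm nc}(X)$ is an $E$-linear combination of symbols $(a_0\otimes s\bar{a}_{1,n}\otimes 1)\otimes_\Lambda(s\bar{a}_{n+1}\otimes x)$ with $n\geq p$, using that $\mathbb{B}_{\geq p}=\bigoplus_{n\geq p}\Lambda\otimes(s\bar\Lambda)^{\otimes n}\otimes\Lambda$ and, since the last tensor factor $\Lambda$ acts on $\Omega_{\rm nc}(X)=s\bar\Lambda\otimes X$ via the nontrivial rule \eqref{equ:nontri-rule}, one may normalize the representative so that this factor is $1$; similarly an element of $\mathbb{B}_{\geq p+1}\otimes_\Lambda X$ is a combination of $(a_0\otimes s\bar{a}_{1,n+1}\otimes 1)\otimes_\Lambda x$ with $n+1\geq p+1$. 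So the asserted formula is a bijection on the level of the underlying graded $E$-modules almost by inspection: it just reindexes the $(n+1)$-st bar entry as the differential-form slot. I would record the inverse as $(a_0\otimes s\bar{a}_{1,n+1}\otimes 1)\otimes_\Lambda x\mapsto (a_0\otimes s\bar{a}_{1,n}\otimes 1)\otimes_\Lambda(s\bar{a}_{n+1}\otimes x)$.

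Next I would verify that this bijection is left $\Lambda$-linear. On the source, the $\Lambda$-action is on $a_0$; on the target likewise on $a_0$; so compatibility with the outermost $\Lambda$-action is immediate. The subtlety, if any, is that on the source there is also the hidden relation coming from the tensor $\otimes_\Lambda$ identifying $(a_0\otimes s\bar a_{1,n}\otimes b)\otimes_\Lambda\omega$ with $(a_0\otimes s\bar a_{1,n}\otimes 1)\otimes_\Lambda (b\blacktriangleright\omega)$; I would check that pushing $b$ through via \eqref{equ:nontri-rule} on the $\Omega_{\rm nc}$ side matches, after the reindexing, the standard $\otimes_\Lambda$ relation $(a_0\otimes s\bar a_{1,n}\otimes b)\otimes_\Lambda x \sim (a_0\otimes s\bar a_{1,n+1}\otimes 1)\otimes_\Lambda x$ with $s\bar a_{n+1}:=s\bar b$ — but here one must be careful, because $b\blacktriangleright(s\bar a_{n+1}\otimes x)=s\overline{b a_{n+1}}\otimes x-s\bar b\otimes a_{n+1}x$ has two terms. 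So well-definedness of the reindexing on the source is exactly the statement that both sides respect the bimodule relations, and this is where the two terms of the nontrivial rule conspire with the internal differential; I would check this directly on representatives, using that $\mathbb{B}^{-n}=\Lambda\otimes(s\bar\Lambda)^{\otimes n}\otimes\Lambda$ is free as a right $\Lambda$-module on $\Lambda\otimes(s\bar\Lambda)^{\otimes n}\otimes 1$, so every class has a unique normalized representative and the map is forced.

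Finally I would check that the bijection is a chain map, i.e.\ intertwines the differentials. The differential on $\mathbb{B}_{\geq p}\otimes_\Lambda\Omega_{\rm nc}(X)$ is $d_{\rm ex}\otimes\mathrm{Id}+(-1)^{?}\mathrm{Id}\otimes d_{\Omega_{\rm nc}}$ (with the Koszul sign), where $d_{\Omega_{\rm nc}}(s\bar a_{n+1}\otimes x)=-s\bar a_{n+1}\otimes d_X(x)$; the differential on $\mathbb{B}_{\geq p+1}\otimes_\Lambda X$ is the induced one from \eqref{equ:external}. Tracking $d_{\rm ex}$ from \eqref{equ:external} on $(a_0\otimes s\bar a_{1,n}\otimes 1)\otimes_\Lambda(s\bar a_{n+1}\otimes x)$, the term $(-1)^n a_0\otimes s\bar a_{1,n-1}\otimes a_n$ applied to $a_n\blacktriangleright(s\bar a_{n+1}\otimes x)=s\overline{a_n a_{n+1}}\otimes x - s\bar a_n\otimes a_{n+1}x$ produces exactly the two missing terms of the bar differential on $\mathbb{B}^{-(n+1)}$ — namely the innermost multiplication $s\overline{a_n a_{n+1}}$ and the right-edge multiplication $s\bar a_n\otimes a_{n+1}x$ — which the source was "missing" because its bar word only had length $n$. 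This telescoping is the crux: it is the reason the shift by one in the truncation index works. I expect the only real bookkeeping obstacle to be getting the Koszul signs right (the grading convention $|s\bar a\otimes x|=|x|-1$ means passing the internal differential past the extra $s\bar\Lambda$ factor introduces a sign), but no conceptual difficulty. Once chain-map compatibility and $\Lambda$-linearity are checked on normalized representatives, the two formulas are visibly inverse to each other, and the lemma follows. I would also note that everything is natural in $X$, as it is defined by the same formula for all $X$, and compatible with the truncation filtration, which is what is needed for the applications in Section~\ref{sec:epsilon} and Proposition~\ref{prop:Omega-bar}.
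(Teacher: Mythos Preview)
Your proposal is correct and follows exactly the same approach as the paper's own proof, which is a two-line sketch: the map is visibly an isomorphism of graded $\Lambda$-modules, and compatibility with differentials is ``routine'' using the external differential \eqref{equ:external} and the nontrivial action \eqref{equ:nontri-rule}. You have simply unpacked that routine verification in detail, and your identification of the crux --- that the rightmost term $(-1)^n a_0\otimes s\bar a_{1,n-1}\otimes a_n$ of $d_{\rm ex}$, once pushed across $\otimes_\Lambda$ via the rule $a_n\blacktriangleright(s\bar a_{n+1}\otimes x)=s\overline{a_na_{n+1}}\otimes x-s\bar a_n\otimes a_{n+1}x$, supplies exactly the $i=n$ inner-multiplication term and the right-edge term of the length-$(n{+}1)$ bar differential --- is precisely the content the paper leaves implicit.
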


\begin{proof}
The given map is an isomorphism of graded $\Lambda$-modules. Using the external differential (\ref{equ:external}) and the nontrivial action (\ref{equ:nontri-rule}) on $\Omega_{\rm nc}(X)$, it is routine to verify that the isomorphism is compatible with differentials.
\end{proof}

 Following \cite[Section~8]{CW}, we have a closed natural transformation of degree zero
 $$\theta\colon {\rm Id}_\mathcal{Y}\longrightarrow \Omega_{\rm nc}$$
defined as follows: for any complex $X$, $\theta_X$ lies in $\mathcal{Y}_1(X, \Omega_{\rm nc}(X))\subseteq \mathcal{Y}(X, \Omega_{\rm nc}(X))$ and is given by
$$\theta_X(s\bar{a}\otimes x)=s\bar{a}\otimes x\in \Omega_{\rm nc}(X).$$

Recall from Lemma~\ref{lem:projres} the dg-projective resolution
$$\varepsilon\otimes {\rm Id}_X\colon \mathbb{B}\otimes_\Lambda X \longrightarrow X.$$
 It will be viewed as an element in $\mathcal{Y}_0(\mathbb{B}\otimes_\Lambda X, X)$ via \eqref{align:naturalinclusion}, and further as a closed morphism in $\mathcal{Y}$ of degree zero. Conversely, we will define another closed morphism in $\mathcal{Y}$ of degree zero
\begin{align}\label{equ:iotaX}
\iota_X\colon X\longrightarrow \mathbb{B}\otimes_\Lambda X.
\end{align}
For each $p\geq 0$, we define the entry $(\iota_X)_p\in \mathcal{Y}_p(X, \mathbb{B}\otimes_\Lambda X)$ by the following map:
$$(s\bar{\Lambda})^{\otimes p}\otimes X \longrightarrow \mathbb{B}^p\otimes_\Lambda X\subseteq \mathbb{B}\otimes_\Lambda X, \quad s\bar{a}_{1, p}\otimes x\longmapsto (1\otimes s\bar{a}_{1, p}\otimes 1)\otimes_\Lambda x.$$
Then we set $\iota_X=((\iota_X)_p)_{p\geq 0}\in \mathcal{Y}(X, \mathbb{B}\otimes_\Lambda X)$. The following identity holds in $\mathcal{Y}$:
\begin{align}\label{equ:iota-inv}
(\varepsilon\otimes_\Lambda {\rm Id}_X)\odot  \iota_X={\rm Id}_X.
\end{align}
Since $\varepsilon\otimes_\Lambda {\rm Id}_X$ is a quasi-isomorphism, it is a homotopy equivalence in $\mathcal{Y}$; see Proposition~\ref{cor:Y}(2). We infer from (\ref{equ:iota-inv})  that $\iota_X$ is a homotopy inverse of $\varepsilon\otimes_\Lambda {\rm Id}_X$.

\begin{rem}\label{rem:iota}
By Lemma~\ref{lem:Y-quasi-iso}, we infer that $\mathcal{Y}(\Lambda, \varepsilon\otimes_\Lambda {\rm Id}_X)$ is an isomorphism in $\mathbf{K}(\Lambda\mbox{-Mod})$. It follows from (\ref{equ:iota-inv}) that $\mathcal{Y}(\Lambda, \iota_X)$ is also an isomorphism in $\mathbf{K}(\Lambda\mbox{-Mod})$. Moreover, we have
$$\mathcal{Y}(\Lambda, \iota_X)^{-1}=\mathcal{Y}(\Lambda, \varepsilon\otimes_\Lambda {\rm Id}_X)$$
in $\mathbf{K}(\Lambda\mbox{-Mod})$.
\end{rem}

Denote by $\pi_0\colon \mathbb{B}\rightarrow \mathbb{B}_{\geq 1}=\mathbb{B}/ \mathbb{B}_{<1}$ the natural projection.

\begin{lem}\label{lem:comm-squ}
The following diagram
\[\xymatrix{
X \ar[r]^-{\theta_X} \ar[d]_-{\iota_X} & \Omega_{\rm nc}(X)\ar[d]^-{\iota_{\Omega_{\rm nc}(X)}}\\
\mathbb{B}\otimes_\Lambda X \ar[r] & \mathbb{B}\otimes_\Lambda \Omega_{\rm nc}(X)
}
\]
commutes in $\mathcal{Y}$, where the lower arrow is the composition of $\pi_0\otimes_\Lambda {\rm Id}_X$ with $\mathbb{B}_{\geq 1}\otimes_\Lambda X\rightarrow \mathbb{B}\otimes_\Lambda \Omega_{\rm nc}(X)$, the inverse of  the canonical isomorphism in Lemma~\ref{lem:iso-Omega}.
\end{lem}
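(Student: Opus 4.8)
The plan is to verify the commutativity of the square directly by computing both composites as morphisms in $\mathcal{Y}$, i.e.\ as families of $E$-linear maps $(s\bar\Lambda)^{\otimes p}\otimes X\to \mathbb{B}\otimes_\Lambda\Omega_{\rm nc}(X)$, and checking that they agree in each filtration-degree $p\geq 0$. Since all four maps in the square ($\theta_X$, $\iota_X$, $\iota_{\Omega_{\rm nc}(X)}$, and the lower horizontal arrow) are given by completely explicit formulas, this reduces to a bookkeeping check; the only subtlety is tracking the Koszul signs in the composition formula \eqref{equation:yonedaproduct} and making sure the two filtration bookkeeping conventions (one for $\mathcal{Y}$-morphisms into a complex, one for the tensor identification of Lemma~\ref{lem:iso-Omega}) are used consistently.

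First I would compute the top-then-right composite. The morphism $\theta_X\in\mathcal{Y}_1(X,\Omega_{\rm nc}(X))$ sends $s\bar a\otimes x$ to $s\bar a\otimes x\in\Omega_{\rm nc}(X)$, i.e.\ it is the identity on the underlying space $s\bar\Lambda\otimes X=\Omega_{\rm nc}(X)$ but shifted into filtration-degree $1$. Post-composing with $\iota_{\Omega_{\rm nc}(X)}$, whose degree-$q$ entry sends $s\bar a_{1,q}\otimes \omega$ to $(1\otimes s\bar a_{1,q}\otimes 1)\otimes_\Lambda\omega$, the formula \eqref{equation:yonedaproduct} shows that $\iota_{\Omega_{\rm nc}(X)}\odot\theta_X$ has, in filtration-degree $p$, the entry $s\bar a_{1,p}\otimes x\mapsto \pm(1\otimes s\bar a_{1,p-1}\otimes 1)\otimes_\Lambda(s\bar a_p\otimes x)$, with the sign governed by $(-1)^{q\cdot|\theta_X|}$ and $|\theta_X|=0$. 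Next I would compute the left-then-bottom composite: $\iota_X$ has degree-$p$ entry $s\bar a_{1,p}\otimes x\mapsto (1\otimes s\bar a_{1,p}\otimes 1)\otimes_\Lambda x$ lying in $\mathbb{B}^{p}\otimes_\Lambda X$, and then I apply $\pi_0\otimes_\Lambda{\rm Id}_X$ (which annihilates the $p=0$ summand, harmlessly here since the relevant term for matching sits in $\mathbb{B}^{\geq 1}$) followed by the inverse of the isomorphism of Lemma~\ref{lem:iso-Omega}, which sends $(1\otimes s\bar a_{1,p}\otimes 1)\otimes_\Lambda x$ to $(1\otimes s\bar a_{1,p-1}\otimes 1)\otimes_\Lambda(s\bar a_p\otimes x)$. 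Matching the two outputs term by term in each filtration-degree, and confirming the signs coincide, finishes the proof.

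The main obstacle I anticipate is purely the sign bookkeeping: the composition $\odot$ in $\mathcal{Y}$ carries the sign $(-1)^{q\cdot|f|}$, the differential on $\Omega_{\rm nc}(X)$ and the shift in $|s\bar a\otimes x|=|x|-1$ interact with how degrees are read off, and the identification in Lemma~\ref{lem:iso-Omega} implicitly reshuffles which tensor factor is regarded as ``the first $s\bar\Lambda$''. I would handle this by fixing, once and for all, the convention that a morphism in $\mathcal{Y}_p$ is evaluated on $s\bar a_{1,p}\otimes(-)$ and checking the equality on such elementary tensors; since $\theta_X$ has cohomological degree zero, most signs in \eqref{equation:yonedaproduct} collapse, and the remaining check is that both composites land on the same elementary tensor with coefficient $+1$. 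One should also note that although $\pi_0$ kills the lowest bar-degree term, the diagram still commutes because $\theta_X$ has filtration-degree $1$, so $\iota_{\Omega_{\rm nc}(X)}\odot\theta_X$ has no filtration-degree-$0$ component to begin with; this is the conceptual reason the projection $\pi_0$ appears in the statement and causes no loss. Both composites are manifestly closed of degree zero (being composites of such), so no separate verification of closedness is needed.
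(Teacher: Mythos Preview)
Your proposal is correct and takes essentially the same approach as the paper's proof: both composites are computed directly in $\mathcal{Y}(X,\mathbb{B}\otimes_\Lambda\Omega_{\rm nc}(X))$ and shown to equal the element whose filtration-degree-$0$ entry vanishes and whose filtration-degree-$p$ entry ($p\geq 1$) sends $s\bar a_{1,p}\otimes x$ to $(1\otimes s\bar a_{1,p-1}\otimes 1)\otimes_\Lambda(s\bar a_p\otimes x)$. The paper simply states this common value without writing out the intermediate steps; your more detailed sign-tracking (in particular the observation that $|\theta_X|=0$ collapses the Koszul sign in \eqref{equation:yonedaproduct}) is exactly what is needed to justify the paper's one-line claim.
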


\begin{proof}
Both the composite morphisms in the square correspond to the same element in $\mathcal{Y}(X, \mathbb{B}\otimes_\Lambda \Omega_{\rm nc}(X))$ given as follows: the entry in $\mathcal{Y}_0(X, \mathbb{B}\otimes_\Lambda \Omega_{\rm nc}(X))$ is zero, and the one in $\mathcal{Y}_p(X, \mathbb{B}\otimes_\Lambda \Omega_{\rm nc}(X))$ sends $s\bar{a}_{1, p}\otimes x$ to $(1\otimes s\bar{a}_{1, p-1}\otimes 1)\otimes_\Lambda (s\bar{a}_{p}\otimes x)$ for any $p\geq 1$.
\end{proof}

\begin{rem}
In view of (\ref{equ:iota-inv}) and in contrast to the above commutative diagram,  the following diagram in $\mathcal{Y}$
\[\xymatrix{
X \ar[r]^-{\theta_X}  & \Omega_{\rm nc}(X)\\
\mathbb{B}\otimes_\Lambda X  \ar[u]^-{\varepsilon\otimes_\Lambda {\rm Id}_X}\ar[r] & \mathbb{B}\otimes_\Lambda \Omega_{\rm nc}(X) \ar[u]_-{\varepsilon\otimes_\Lambda {\rm Id}_{\Omega_{\rm nc}(X)}}
}
\]
does {\it not} commute in general, as the two composite morphisms have differential filtration-degrees. This is one of the motivations to study the better-behaved morphisms $\iota_X$.
\end{rem}

By using Lemma~\ref{lem:iso-Omega} repeatedly, we obtain a canonical isomorphism
$$\varsigma_p\colon \mathbb{B}\otimes_\Lambda \Omega_{\rm nc}^p(X)\longrightarrow \mathbb{B}_{\geq p}\otimes_\Lambda X$$
of complexes of $\Lambda$-modules for each $p\geq 0$. Here, we have $\Omega_{\rm nc}^0(X)=X$ and $\mathbb{B}_{\geq 0}=\mathbb{B}$. Therefore, $\varsigma_0$ is the identity map. In more details, the isomorphism $\varsigma_p$ sends  $(a_0\otimes s\bar{a}_{1, n}\otimes 1)\otimes_\Lambda (s\bar{a}_{n+1,n+p}\otimes x)$ to $(a_0\otimes s\bar{a}_{1, n+p}\otimes 1)\otimes_\Lambda x$ for any $n\geq 0$.

Denote by   $\pi_p\colon \mathbb{B}_{\geq p}\rightarrow \mathbb{B}_{\geq p+1}$ the projection for any $p\geq 0$. The following commutative diagram is crucial in the proof of Theorem~\ref{thm:SY}.

\begin{prop}\label{prop:Omega-bar}
Keep the notation as above. Then for each $p\geq 0$, the following diagram
\[\xymatrix{
\Omega_{\rm nc}^p(X) \ar[rr]^-{\theta_{\Omega_{\rm nc}^p(X)}} \ar[d]_-{\varsigma_p\circ \iota_{\Omega_{\rm nc}^p(X)}} && \Omega_{\rm nc}^{p+1}(X)  \ar[d]^-{\varsigma_{p+1}\circ \iota_{\Omega_{\rm nc}^{p+1}(X)}}\\
\mathbb{B}_{\geq p}\otimes_\Lambda X \ar[rr]^-{\pi_p\otimes_\Lambda {\rm Id}_X} && \mathbb{B}_{\geq p+1}\otimes_\Lambda X
}
\]
commutes in $\mathcal{Y}$.
\end{prop}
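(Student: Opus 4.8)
The plan is to reduce the claimed commutativity to the special case $p=0$, which is essentially Lemma~\ref{lem:comm-squ}, by exploiting the naturality of $\theta$ and $\iota$ together with the compatibility of $\Omega_{\rm nc}$ with the isomorphisms $\varsigma_p$. First I would record two facts. One: the natural transformation $\theta\colon {\rm Id}_\mathcal{Y}\to \Omega_{\rm nc}$, applied to the object $\Omega_{\rm nc}^p(X)$, is just $\theta_{\Omega_{\rm nc}^p(X)}$; and applying the dg endofunctor $\Omega_{\rm nc}^p$ to the morphism $\theta_X\colon X\to\Omega_{\rm nc}(X)$ gives a closed degree-zero morphism $\Omega_{\rm nc}^p(\theta_X)\colon \Omega_{\rm nc}^p(X)\to \Omega_{\rm nc}^{p+1}(X)$. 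These need not literally coincide, but the key point (which I would verify directly from the explicit formula $\theta_X(s\bar a\otimes x)=s\bar a\otimes x$ and the definition of $\Omega_{\rm nc}$ on morphisms) is that $\theta_{\Omega_{\rm nc}^p(X)}$ and $\Omega_{\rm nc}^p(\theta_X)$ agree, since $\Omega_{\rm nc}^p(X)=(s\bar\Lambda)^{\otimes p}\otimes X$ and both maps simply split off the leftmost (resp.\ the $(p{+}1)$-st) copy of $s\bar\Lambda$; the identification is the obvious reshuffling of tensor factors. Two: under the canonical isomorphisms $\varsigma_p$, the map $\pi_p\otimes_\Lambda{\rm Id}_X\colon \mathbb{B}_{\geq p}\otimes_\Lambda X\to\mathbb{B}_{\geq p+1}\otimes_\Lambda X$ corresponds to $\mathbb{B}$ tensored with the lower horizontal map of Lemma~\ref{lem:comm-squ} applied to $\Omega_{\rm nc}^p(X)$ in place of $X$ — that is, the composite $\mathbb{B}\otimes_\Lambda\Omega_{\rm nc}^p(X)\xrightarrow{\pi_0\otimes{\rm Id}}\mathbb{B}_{\geq 1}\otimes_\Lambda\Omega_{\rm nc}^p(X)\cong\mathbb{B}\otimes_\Lambda\Omega_{\rm nc}^{p+1}(X)$. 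This is a direct unwinding of the formulas defining $\varsigma_p$, $\varsigma_{p+1}$ and the isomorphism of Lemma~\ref{lem:iso-Omega}: chasing the element $(a_0\otimes s\bar a_{1,n}\otimes 1)\otimes_\Lambda(s\bar a_{n+1,n+p}\otimes x)$ through both routes yields $(a_0\otimes s\bar a_{1,n+p}\otimes 1)\otimes_\Lambda x$ if $n\geq 1$ and $0$ if $n=0$, matching $\pi_p$.

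Granting these two identifications, the desired square becomes the outer rectangle of the following composite. Apply the dg functor $\Omega_{\rm nc}^p$ to the commutative square of Lemma~\ref{lem:comm-squ} (a dg functor preserves commutative squares of closed morphisms in a dg category), obtaining
\[
\xymatrix{
\Omega_{\rm nc}^p(X)\ar[r]^-{\Omega_{\rm nc}^p(\theta_X)}\ar[d]_-{\Omega_{\rm nc}^p(\iota_X)} & \Omega_{\rm nc}^{p+1}(X)\ar[d]^-{\Omega_{\rm nc}^p(\iota_{\Omega_{\rm nc}(X)})}\\
\Omega_{\rm nc}^p(\mathbb{B}\otimes_\Lambda X)\ar[r] & \Omega_{\rm nc}^p(\mathbb{B}\otimes_\Lambda\Omega_{\rm nc}(X)).
}
\]
Now I would rewrite each corner and each arrow in terms of $\mathbb{B}_{\geq\bullet}\otimes_\Lambda X$ using $\varsigma$. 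The subtlety is that the bottom-left corner here is $\Omega_{\rm nc}^p(\mathbb{B}\otimes_\Lambda X)$, not $\mathbb{B}\otimes_\Lambda\Omega_{\rm nc}^p(X)$; but $\Omega_{\rm nc}$ commutes (up to the evident natural isomorphism of graded modules, compatible with differentials by the same routine check as in Lemma~\ref{lem:iso-Omega}) with $\mathbb{B}\otimes_\Lambda-$, since $\Omega_{\rm nc}(Z)=s\bar\Lambda\otimes Z$ and $\mathbb{B}\otimes_\Lambda Z=\Lambda\otimes T(s\bar\Lambda)\otimes Z$ only involve tensoring on the appropriate side. Composing this natural isomorphism with $\varsigma_p$ identifies $\Omega_{\rm nc}^p(\mathbb{B}\otimes_\Lambda X)$ with $\mathbb{B}_{\geq p}\otimes_\Lambda X$ in such a way that $\Omega_{\rm nc}^p(\iota_X)$ becomes $\varsigma_p\circ\iota_{\Omega_{\rm nc}^p(X)}$ — this last point is the naturality of $\iota$ combined with the fact that $\iota_{\Omega_{\rm nc}^p(X)}$ was built precisely by the same tensor-reshuffling recipe (one can see it on the defining formula: $(\iota_Z)_q$ sends $s\bar a_{1,q}\otimes z$ to $(1\otimes s\bar a_{1,q}\otimes 1)\otimes_\Lambda z$, and applying $\Omega_{\rm nc}^p$ just carries along the extra $p$ leading factors). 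Similarly the right vertical arrow becomes $\varsigma_{p+1}\circ\iota_{\Omega_{\rm nc}^{p+1}(X)}$ and the bottom arrow becomes $\pi_p\otimes_\Lambda{\rm Id}_X$ by the second fact above. The top arrow becomes $\theta_{\Omega_{\rm nc}^p(X)}$ by the first fact. This is exactly the square in the statement.

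I expect the main obstacle to be purely bookkeeping: keeping track of the several natural isomorphisms of graded $\Lambda$-modules — $\Omega_{\rm nc}^p(\mathbb{B}\otimes_\Lambda-)\cong \mathbb{B}\otimes_\Lambda\Omega_{\rm nc}^p(-)$, its composite $\varsigma_p$ with the iterated Lemma~\ref{lem:iso-Omega} isomorphism, and the action of $\Omega_{\rm nc}^p$ on the specific morphisms $\iota$, $\theta$ — and checking at each stage that they are compatible with differentials and with the left $\Lambda$-action, given the nontrivial rule \eqref{equ:nontri-rule} on $\Omega_{\rm nc}$. Since all the maps in sight are closed of degree zero and everything is defined by explicit tensor-factor manipulations, the cleanest route is probably to bypass the functoriality dressing and simply compute both composites in the square directly as elements of $\mathcal{Y}(\Omega_{\rm nc}^p(X),\mathbb{B}_{\geq p+1}\otimes_\Lambda X)$: as in the proof of Lemma~\ref{lem:comm-squ}, one checks that the filtration-degree-$q$ component of each sends $s\bar a_{1,q}\otimes(s\bar a_{q+1,q+p}\otimes x)$ to $(1\otimes s\bar a_{1,q-1}\otimes 1)\otimes_\Lambda(s\bar a_{q,q+p}\otimes x)$ for $q\geq 1$, and to $0$ for $q=0$, after applying $\varsigma_{p+1}$. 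Matching these two explicit formulas then finishes the proof.
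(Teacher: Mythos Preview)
Your proposal would work, and your fallback direct computation is certainly valid, but your main route---applying the \emph{functor} $\Omega_{\rm nc}^p$ to the square of Lemma~\ref{lem:comm-squ} and then translating all four corners via identifications---is more convoluted than needed, and one of those identifications is shakier than you suggest. The claimed ``evident natural isomorphism'' $\Omega_{\rm nc}^p(\mathbb{B}\otimes_\Lambda X)\cong\mathbb{B}\otimes_\Lambda\Omega_{\rm nc}^p(X)$ is \emph{not} a mere reshuffling of tensor factors: the left $\Lambda$-action on $\Omega_{\rm nc}(Z)$ is the twisted one of~\eqref{equ:nontri-rule}, so $\Omega_{\rm nc}(\mathbb{B}\otimes_\Lambda X)$ carries a nonstandard action on the outermost $s\bar\Lambda$, whereas $\mathbb{B}\otimes_\Lambda\Omega_{\rm nc}(X)$ has the free action coming from the leftmost $\Lambda$ in $\mathbb{B}$. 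An isomorphism does exist, but establishing it and then checking that $\Omega_{\rm nc}^p(\iota_X)$ goes over to $\iota_{\Omega_{\rm nc}^p(X)}$ under it is genuine work, not ``only tensoring on the appropriate side''.

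The paper sidesteps all of this with a much simpler move: rather than applying $\Omega_{\rm nc}^p$ as a functor to the square, it simply \emph{evaluates} Lemma~\ref{lem:comm-squ} at the object $\Omega_{\rm nc}^p(X)$ in place of $X$. This immediately yields a commutative square in $\mathcal{Y}$ whose bottom row is $\mathbb{B}\otimes_\Lambda\Omega_{\rm nc}^p(X)\to\mathbb{B}\otimes_\Lambda\Omega_{\rm nc}^{p+1}(X)$ and whose verticals are already $\iota_{\Omega_{\rm nc}^p(X)}$ and $\iota_{\Omega_{\rm nc}^{p+1}(X)}$---no functor applied, no identification needed. One then stacks underneath it the square in $C_{\rm dg}(\Lambda\mbox{-Mod})$ with vertical isomorphisms $\varsigma_p,\varsigma_{p+1}$ and bottom row $\pi_p\otimes_\Lambda{\rm Id}_X$; this is exactly your ``second fact'', verified by the element chase you describe. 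The two squares glue to give the diagram in the statement. This route costs nothing beyond Lemma~\ref{lem:comm-squ} itself plus one honest commutative square of genuine chain maps, and never requires comparing $\Omega_{\rm nc}^p(\mathbb{B}\otimes_\Lambda-)$ with $\mathbb{B}\otimes_\Lambda\Omega_{\rm nc}^p(-)$.
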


\begin{proof}
By Lemma~\ref{lem:iso-Omega}, we have a canonical isomorphism
$$\psi\colon \mathbb{B}_{\geq 1}\otimes_\Lambda \Omega_{\rm nc}^p(X)\longrightarrow \mathbb{B}\otimes_\Lambda  \Omega_{\rm nc}^{p+1}(X).$$
 Applying Lemma~\ref{lem:comm-squ} to $\Omega_{\rm nc}^p(X)$, we obtain the following commutative diagram in $\mathcal{Y}$.
\[\xymatrix{
\Omega_{\rm nc}^p(X) \ar[rrr]^-{\theta_{\Omega_{\rm nc}^p(X)}} \ar[d]_-{\iota_{\Omega_{\rm nc}^p(X)}} && & \Omega_{\rm nc}^{p+1}(X)  \ar[d]^-{\iota_{\Omega_{\rm nc}^{p+1}(X)}}\\
\mathbb{B}\otimes_\Lambda \Omega_{\rm nc}^p(X) \ar[rrr]^-{\psi\circ (\pi_0\otimes_\Lambda {\rm Id}_{\Omega_{\rm nc}^p(X)})} &&& \mathbb{B}\otimes_\Lambda \Omega_{\rm nc}^{p+1}(X)
}
\]
We observe the following commutative diagram in $C_{\rm ac}(\Lambda\mbox{-Mod})$,
\[\xymatrix{
\mathbb{B}\otimes_\Lambda \Omega_{\rm nc}^p(X) \ar[d]_-{\varsigma_p} \ar[rrr]^-{\psi\circ (\pi_0\otimes_\Lambda {\rm Id}_{\Omega_{\rm nc}^p(X)})} &&& \mathbb{B}\otimes_\Lambda \Omega_{\rm nc}^{p+1}(X)\ar[d]^-{\varsigma_{p+1}}\\
\mathbb{B}_{\geq p}\otimes_\Lambda X \ar[rrr]^-{\pi_p\otimes_\Lambda {\rm Id}_X}
&&& \mathbb{B}_{\geq p+1}\otimes_\Lambda X
}
\]
which is also a commutative diagram in $\mathcal{Y}$. Combining the above two commutative squares, we obtain the required one.
\end{proof}

\section{The singular Yoneda dg category}\label{sec:SY}
In this section, we study the singular Yoneda dg category introduced in \cite{CW}, whose Hom complexes with the first entry $\Lambda$ will play a central role in this paper.

The \emph{$E$-relative singular Yoneda dg category} $\mathcal{SY}=\mathcal{SY}_{\Lambda/E}$ of $\Lambda$ is a dg category defined as follows: its objects are just complexes of $\Lambda$-modules; for two objects $X$ and $Y$, the Hom complex $\mathcal{SY}(X, Y)$ is defined to be the colimit of the following sequence of complexes.
$$\mathcal{Y}(X, Y)\longrightarrow \mathcal{Y}(X, \Omega_{\rm nc}(Y))\longrightarrow \cdots \longrightarrow \mathcal{Y}(X, \Omega_{\rm nc}^p(Y))\longrightarrow \mathcal{Y}(X, \Omega_{\rm nc}^{p+1}(Y))\longrightarrow \cdots$$
The structure map sends $f$ to $\theta_{\Omega_{\rm nc}^p(Y)}\odot f$. More precisely, for any $f \in  \mathcal{Y}_n(X, \Omega_{\rm nc}^p(Y))$, the map $ \theta_{\Omega_{\rm nc}^p(Y)}\odot f \in  \mathcal{Y}_{n+1}(X, \Omega_{\rm nc}^{p+1}(Y))$ is given by
\begin{align*}
s \bar{a}_{1, n+1} \otimes x \longmapsto (-1)^{|f|} s\bar{a}_1 \otimes f(s\bar{a}_{2, n+1}\otimes x).
\end{align*}

The image of $f\in \mathcal{Y}(X, \Omega_{\rm nc}^p(Y))$ in $\mathcal{SY}(X, Y)$ is denoted by $[f;p]$. The composition $\odot_{\rm sg}$ of $[f;p]$ with $[g;q]\in \mathcal{SY}(Y, Z)$ is defined by
\begin{align}\label{algin:odotsg}
[g;q]\odot_{\rm sg} [f;p]=[\Omega_{\rm nc}^p(g)\odot f;p+q].
\end{align}
We have the canonical dg functor $\mathcal{Y}\rightarrow \mathcal{SY}$, which acts on objects by the identity and sends $f$ to $[f;0]$.

We observe that for each complex $X$ of  $\Lambda$-modules, $\mathcal{SY}(\Lambda, X)$ is also a complex of  $\Lambda$-modules; its $\Lambda$-module structure  is induced from the right $\Lambda$-module structure on $\Lambda$.

\begin{lem}\label{lem:SY} Keep the notation as above.  Then the following two statements hold.
\begin{enumerate}
\item The stalk complex $\Lambda$ is contractible in $\mathcal{SY}$. In particular, the complex $\mathcal{SY}(\Lambda, X)$ is acyclic for any complex $X$.
    \item Any acyclic complex $X$ is contractible in $\mathcal{SY}$. Moreover, the complex $\mathcal{SY}(\Lambda, X)$ of $\Lambda$-modules is contractible for any acyclic complex $X$.
\end{enumerate}
\end{lem}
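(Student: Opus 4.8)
The plan is to prove both statements by exhibiting an explicit contracting homotopy, and the natural candidate for it is the sequence of morphisms $\theta$ used to define $\mathcal{SY}$. First I would observe that in $\mathcal{SY}$ the identity of $\Lambda$ is $[\mathrm{Id}_\Lambda;0]$, and that the structure map of the colimit identifies $[\mathrm{Id}_\Lambda;0]$ with $[\theta_\Lambda;1]$, hence with $[\Omega_{\mathrm{nc}}^{p-1}(\theta_\Lambda)\odot\cdots;p]$ for every $p\ge 0$. So to show $\Lambda$ is contractible it suffices, by Lemma~\ref{lem:contra}(2), to write $\mathrm{Id}_\Lambda$ as a coboundary in $\mathcal{SY}(\Lambda,\Lambda)$. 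The key computation will be that $\theta_\Lambda\colon \Lambda\to \Omega_{\mathrm{nc}}(\Lambda)$, viewed in $\mathcal{Y}_1(\Lambda,\Omega_{\mathrm{nc}}(\Lambda))$, becomes null-homotopic \emph{after} one application of $\theta$, i.e.\ there is $h\in\mathcal{Y}(\Lambda,\Omega_{\mathrm{nc}}(\Lambda))$ of degree $-1$ with $\delta(h)=\theta_\Lambda$ up to the image of a lower stage; this is exactly the statement that $\varepsilon\otimes_\Lambda\mathrm{Id}$ and $\iota$ are mutually inverse homotopy equivalences (Proposition~\ref{cor:Y}(2), equation~\eqref{equ:iota-inv}) transported through the isomorphism $\mathbb{B}_{\ge p}\otimes_\Lambda\Omega_{\mathrm{nc}}^p\simeq\mathbb{B}_{\ge p+1}$ of Lemma~\ref{lem:iso-Omega}.

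More concretely, the cleanest route uses Proposition~\ref{prop:Omega-bar}: the isomorphisms $\varsigma_p\circ\iota_{\Omega_{\mathrm{nc}}^p(X)}\colon \Omega_{\mathrm{nc}}^p(X)\to \mathbb{B}_{\ge p}\otimes_\Lambda X$ in $\mathcal{Y}$ intertwine the colimit structure maps $\theta$ with the projections $\pi_p\colon \mathbb{B}_{\ge p}\to\mathbb{B}_{\ge p+1}$. Hence $\mathcal{SY}(X,Y)$ is isomorphic, as a colimit, to the colimit of $\mathcal{Y}(X,\mathbb{B}_{\ge p}\otimes_\Lambda Y)\to\mathcal{Y}(X,\mathbb{B}_{\ge p+1}\otimes_\Lambda Y)$ along $\mathcal{Y}(X,\pi_p\otimes_\Lambda\mathrm{Id})$; via $\alpha_{X,-}$ this is the colimit of $\mathrm{Hom}_\Lambda(\mathbb{B}\otimes_\Lambda X,\ \mathbb{B}_{\ge p}\otimes_\Lambda Y)$. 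Taking $X=Y=\Lambda$: since $\mathbb{B}\otimes_\Lambda\Lambda=\mathbb{B}$ is $\mathbf{K}$-projective and quasi-isomorphic to $\Lambda$, while $\mathbb{B}_{\ge p}\otimes_\Lambda\Lambda=\mathbb{B}_{\ge p}$ is acyclic in the relevant range, each transition map in the colimit shifts the (co)homology off to infinity, so the colimit complex is acyclic; in fact one can pin down the contracting homotopy explicitly from the splittings $\mathbb{B}_{\le p}\otimes_\Lambda X\hookrightarrow\mathbb{B}\otimes_\Lambda X$ being componentwise split with projective-over-$E$ factors. Then $\mathrm{Id}_\Lambda$ is a coboundary, giving (1); the ``In particular'' follows from Lemma~\ref{lem:contra} applied to the dg functor $\mathcal{SY}(\Lambda,-)$ (a dg functor sends the contractible object $\Lambda$ to a contractible object, and a contractible object has acyclic Hom complex into anything — here the $\Lambda$-module structure is the one noted before the lemma, so $\mathcal{SY}(\Lambda,X)$ is acyclic as a complex of abelian groups, a fortiori of $\Lambda$-modules).

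For (2), the point is to upgrade ``acyclic'' to ``contractible''. By Proposition~\ref{cor:Y}(1), an acyclic complex $X$ is already contractible in $\mathcal{Y}$, and the canonical dg functor $\mathcal{Y}\to\mathcal{SY}$ sends contractible objects to contractible objects (Lemma~\ref{lem:contra}(2), since $\mathrm{Id}_X$ being a coboundary in $\mathcal{Y}(X,X)$ maps to $\mathrm{Id}_X=[\mathrm{Id}_X;0]$ being a coboundary in $\mathcal{SY}(X,X)$). Hence $X$ is contractible in $\mathcal{SY}$, and applying the dg functor $\mathcal{SY}(\Lambda,-)$ to the contractible object $X$ shows $\mathcal{SY}(\Lambda,X)$ is a contractible object in $C_{\mathrm{dg}}(\Lambda\mbox{-Mod})$, i.e.\ a contractible complex of $\Lambda$-modules. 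I would present this second part first since it is short, then do (1).

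The main obstacle is part (1): unlike (2) it does not follow from a statement already established in $\mathcal{Y}$, because $\Lambda$ is \emph{not} contractible in $\mathcal{Y}$ (its endomorphism complex computes $\mathrm{Ext}^*_\Lambda(\Lambda,\Lambda)=\Lambda\ne 0$). The contractibility of $\Lambda$ in $\mathcal{SY}$ genuinely uses the colimit: one must verify that the colimit of $\mathrm{Hom}_\Lambda(\mathbb{B},\mathbb{B}_{\ge p})$ along the projections is acyclic, and produce an honest degree $-1$ cycle-killing homotopy for $\mathrm{Id}_\Lambda$. I expect the careful bookkeeping of signs and filtration-degrees in transporting $\iota$, $\varepsilon\otimes\mathrm{Id}$ and the splittings $\mathbb{B}_{\le p}\subseteq\mathbb{B}$ through $\varsigma_p$ and $\alpha_{\Lambda,-}$ to be the delicate part; everything else is formal nonsense about contractible objects and dg functors.
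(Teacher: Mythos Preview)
Your treatment of (2) is correct and matches the paper exactly: $X$ acyclic is contractible in $\mathcal{Y}$ by Proposition~\ref{cor:Y}(1), the dg functor $\mathcal{Y}\to\mathcal{SY}$ preserves contractibles, and then the dg functor $\mathcal{SY}(\Lambda,-)\colon\mathcal{SY}\to C_{\rm dg}(\Lambda\mbox{-Mod})$ (with the $\Lambda$-action coming from the right action on the first slot) sends $X$ to a contractible complex of $\Lambda$-modules. The paper makes the $\Lambda$-linearity of the contracting homotopy explicit --- it is $[f;p]\mapsto [h;0]\odot_{\rm sg}[f;p]$ for $h\in\mathcal{Y}(X,X)$ with $\delta(h)={\rm Id}_X$ --- but this is exactly what your dg-functor argument unpacks to.

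For (1), your approach works but is far more elaborate than necessary. The paper's proof is three lines: since cohomology commutes with filtered colimits, $H^0(\mathcal{SY}(\Lambda,\Lambda))\simeq {\rm colim}\; H^0\mathcal{Y}(\Lambda,\Omega_{\rm nc}^p(\Lambda))$; but $\Omega_{\rm nc}^p(\Lambda)$ is a stalk complex concentrated in degree $-p$, so by \eqref{iso:Y-D} one has $H^0\mathcal{Y}(\Lambda,\Omega_{\rm nc}^p(\Lambda))\simeq {\rm Hom}_{\mathbf{D}(\Lambda\mbox{-}{\rm Mod})}(\Lambda,\Omega_{\rm nc}^p(\Lambda))=0$ for $p\geq 1$; now apply Lemma~\ref{lem:contra}. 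No explicit homotopy is needed, nor any transport through $\varsigma_p$, $\iota$, or the bar-resolution identifications. Your route via Proposition~\ref{prop:Omega-bar} --- identifying $\mathcal{SY}(\Lambda,\Lambda)$ up to quasi-isomorphism with ${\rm colim}\;\mathcal{Y}(\Lambda,\mathbb{B}_{\geq p})$ and then arguing the latter is acyclic --- is essentially the mechanism behind Theorem~\ref{thm:SY}, which is a genuinely harder statement proved \emph{after} this lemma. You are right that the ``delicate part'' would be the sign and filtration bookkeeping, but the point is that none of it is needed here: the vanishing of $H^0$ of each term in the original colimit is immediate from the degree shift in $\Omega_{\rm nc}^p$. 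Your own first instinct (show $[\theta_\Lambda;1]$ is a coboundary by observing $H^0\mathcal{Y}(\Lambda,\Omega_{\rm nc}(\Lambda))=0$) was already the whole argument.
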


\begin{proof}
(1) We observe that $H^0(\mathcal{SY}(\Lambda, \Lambda))$ is isomorphic to ${\rm colim} \; H^0\mathcal{Y}(\Lambda, \Omega_{\rm nc}^p(\Lambda))$. As $\Omega_{\rm nc}^p(\Lambda)$ is a stalk complex concentrated on degree $-p$, it follows from (\ref{iso:Y-D}) that for each $p\geq 1$, $H^0\mathcal{Y}(\Lambda, \Omega_{\rm nc}^p(\Lambda))=0$. Then the required statements follow from Lemma~\ref{lem:contra}.

(2) By Proposition~\ref{cor:Y}(1), any acyclic complex $X$ is a contractible object in $\mathcal{Y}$. As any dg functor preserves contractible objects, it follows that $X$ is contractible in $\mathcal{SY}$.

Take $h\in \mathcal{Y}(X, X)$ of degree $-1$ with $\delta(h)={\rm Id}_X$, where $\delta$ denotes the differntial in $\mathcal{Y}$. For the contractibility of $\mathcal{SY}(\Lambda, X)$, we may choose the homotopy which sends any morphism $[f;p]$ to $[h; 0]\odot_{\rm sg}[f;p]$. Note that this homotopy is compatible with the left $\Lambda$-module structure on $\mathcal{SY}(\Lambda, X)$.
\end{proof}

By Lemma~\ref{lem:SY}(1), the following triangle functor
$$\mathcal{SY}(\Lambda, -)\colon \mathbf{K}(\Lambda\mbox{-Mod})\longrightarrow \mathbf{K}_{\rm ac}(\Lambda\mbox{-Mod})$$
is well defined. By  Lemma~\ref{lem:SY}(2) it vanishes on acyclic complexes, so we have the induced triangle functor
\begin{align*}\mathcal{SY}(\Lambda, -)\colon \mathbf{D}(\Lambda\mbox{-Mod})\longrightarrow \mathbf{K}_{\rm ac}(\Lambda\mbox{-Mod}).
\end{align*}

\begin{prop}\label{prop:SY}
Assume that $\Lambda$ is left noetherian. Then for each complex $X$ of $\Lambda$-modules, the complex $\mathcal{SY}(\Lambda, X)$ is acyclic and consists of injective $\Lambda$-modules.
\end{prop}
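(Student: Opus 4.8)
The plan is to combine two ingredients: the acyclicity of $\mathcal{SY}(\Lambda, X)$, which is already contained in Lemma~\ref{lem:SY}(1), and the classical fact that over a left noetherian ring a filtered colimit of injective modules is again injective. The only substantial point is therefore to recognise the components of $\mathcal{SY}(\Lambda, X)$ as such colimits.

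By definition $\mathcal{SY}(\Lambda, X)$ is the colimit, over $p\geq 0$, of the complexes $\mathcal{Y}(\Lambda, \Omega_{\rm nc}^p(X))$ along the structure maps $f\mapsto \theta_{\Omega_{\rm nc}^p(X)}\odot f$; being indexed by a directed poset, this is a filtered colimit. I would first observe that each structure map is $\Lambda$-linear for the $\Lambda$-module structure carried by $\mathcal{SY}(\Lambda, -)$: it is post-composition with $\theta_{\Omega_{\rm nc}^p(X)}$ via $\odot$, hence commutes with the $\Lambda$-action, which is induced from the bimodule structure on the first argument $\Lambda$. Thus the colimit is taken inside $\Lambda\mbox{-Mod}$, and since colimits of complexes are computed degreewise, for every $n$ one obtains $\mathcal{SY}(\Lambda, X)^n=\varinjlim_p \mathcal{Y}(\Lambda, \Omega_{\rm nc}^p(X))^n$ as $\Lambda$-modules. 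By Remark~\ref{rem:c} each $\mathcal{Y}(\Lambda, \Omega_{\rm nc}^p(X))$ is dg-injective, so each $\mathcal{Y}(\Lambda, \Omega_{\rm nc}^p(X))^n$ is an injective $\Lambda$-module, and consequently $\mathcal{SY}(\Lambda, X)^n$ is a filtered colimit of injective $\Lambda$-modules.

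It remains to invoke that such a filtered colimit is injective because $\Lambda$ is left noetherian; this is exactly where the hypothesis is used. If one wishes to keep the argument self-contained, one checks by Baer's criterion that ${\rm Ext}_\Lambda^1(\Lambda/\mathfrak{a}, \mathcal{SY}(\Lambda, X)^n)=0$ for every left ideal $\mathfrak{a}$ of $\Lambda$: since $\Lambda$ is left noetherian, $\mathfrak{a}$ is finitely generated, so $\Lambda/\mathfrak{a}$ is finitely presented and ${\rm Ext}_\Lambda^1(\Lambda/\mathfrak{a}, -)$ commutes with filtered colimits, whence this group equals $\varinjlim_p {\rm Ext}_\Lambda^1(\Lambda/\mathfrak{a}, \mathcal{Y}(\Lambda, \Omega_{\rm nc}^p(X))^n)=0$. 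Together with Lemma~\ref{lem:SY}(1) this gives the proposition. I do not expect a genuine obstacle here; the only point deserving care is the $\Lambda$-linearity of the structure maps, which guarantees that one is really forming a filtered colimit of injective modules within $\Lambda\mbox{-Mod}$ rather than merely within abelian groups.
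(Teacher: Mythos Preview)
Your proposal is correct and follows essentially the same approach as the paper: acyclicity from Lemma~\ref{lem:SY}, dg-injectivity of each $\mathcal{Y}(\Lambda,\Omega_{\rm nc}^p(X))$ (the paper cites Proposition~\ref{prop:Y-i} rather than Remark~\ref{rem:c}, but these give the same conclusion), and then the standard fact that direct limits of injectives are injective over a left noetherian ring. Your added care about the $\Lambda$-linearity of the structure maps and the optional Baer-criterion justification are sound elaborations of points the paper leaves implicit.
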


\begin{proof}
By Lemma~\ref{lem:SY}, the complex $\mathcal{SY}(\Lambda, X)$ is acyclic. It is the colimit of the following sequence
$$\mathcal{Y}(\Lambda, X)\longrightarrow \mathcal{Y}(\Lambda, \Omega_{\rm nc}(X))\longrightarrow \cdots \longrightarrow \mathcal{Y}(\Lambda, \Omega_{\rm nc}^p(X))\longrightarrow \mathcal{Y}(\Lambda, \Omega_{\rm nc}^{p+1}(X))\longrightarrow \cdots$$
whose each term is dg-injective; see Proposition~\ref{prop:Y-i}. Since $\Lambda$ is left noetherian, any direct limit of injective $\Lambda$-modules is injective. It follows that $\mathcal{SY}(\Lambda, X)$ is complex of injective $\Lambda$-modules.
\end{proof}

By the above proposition, we actually have a well-defined triangle functor
\begin{align}\label{fun:SY}
\mathcal{SY}(\Lambda, -)\colon \mathbf{D}(\Lambda\mbox{-Mod})\longrightarrow \mathbf{K}_{\rm ac}(\Lambda\mbox{-Inj}).
\end{align}

We will consider the following sequence of injective maps between complexes, which are induced by the inclusions $\mathbb{B}_{\leq p}\otimes_\Lambda X \subseteq \mathbb{B}_{\leq p+1}\otimes_\Lambda X$.
$$\mathcal{Y}(\Lambda, \mathbb{B}_{\leq 0}\otimes_\Lambda X)\hookrightarrow \mathcal{Y}(\Lambda, \mathbb{B}_{\leq 1}\otimes_\Lambda X)\hookrightarrow \mathcal{Y}(\Lambda, \mathbb{B}_{\leq 2}\otimes_\Lambda X)\hookrightarrow \cdots$$
We take the colimit, denoted by ${\rm colim}\; \mathcal{Y}(\Lambda, \mathbb{B}_{\leq p}\otimes_\Lambda X)$.  For each $p\geq 0$, we consider the following map
$$\mathcal{Y}(\Lambda, \mathbb{B}_{\leq p}\otimes_\Lambda X) \xrightarrow{\mathcal{Y}(\Lambda, {\rm inc})} \mathcal{Y}(\Lambda, \mathbb{B}\otimes_\Lambda X)\xrightarrow{\mathcal{Y}(\Lambda, \varepsilon\otimes_\Lambda {\rm Id}_X)} \mathcal{Y}(\Lambda, X),$$
where ``inc" denotes the inclusion $\mathbb{B}_{\leq p}\otimes_\Lambda X \subseteq \mathbb{B}\otimes_\Lambda X$. These maps are compatible with the ones in the sequence above, and induce the following one
\begin{align}\label{equ:vartheta}
\vartheta_X\colon {\rm colim}\; \mathcal{Y}(\Lambda, \mathbb{B}_{\leq p}\otimes_\Lambda X)\longrightarrow \mathcal{Y}(\Lambda, X).
\end{align}
More explicitly, $\vartheta_X$ sends an element represented by $f\in \mathcal{Y}_q(\Lambda, \mathbb{B}_{\leq q}\otimes_\Lambda X)$ to the composition $(\varepsilon\otimes_\Lambda {\rm Id}_X)\circ {\rm inc}\circ f\in \mathcal{Y}_q(\Lambda, X)$.

The following result shows that $\mathcal{SY}(\Lambda, X)$ is homotopy equivalent to the mapping cone ${\rm Cone}(\vartheta_X)$ of $\vartheta_X$. We denote by $C(\Lambda\mbox{-Inj})$ the category of complexes of injective $\Lambda$-modules.

\begin{thm}\label{thm:SY}
Assume that $\Lambda$ is left noetherian. Then for any complex $X$ of $\Lambda$-modules, we have an exact triangle in $\mathbf{K}(\Lambda\mbox{-}{\rm Inj})$:
$$  {\rm colim}\;\mathcal{Y}(\Lambda, \mathbb{B}_{\leq p}\otimes_\Lambda X)\stackrel{\vartheta_X}\longrightarrow \mathcal{Y}(\Lambda, X)\longrightarrow \mathcal{SY}(\Lambda, X)\longrightarrow \Sigma({\rm colim}\;\mathcal{Y}(\Lambda, \mathbb{B}_{\leq p}\otimes_\Lambda X)),$$
where the middle arrow is the canonical map, sending $f$ to $[f; 0]$.
\end{thm}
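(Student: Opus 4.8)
The plan is to realize $\mathcal{SY}(\Lambda, X)$ as the mapping cone of $\vartheta_X$ up to homotopy, by writing both sides as filtered colimits assembled from the short exact sequences
\[
0\longrightarrow \mathbb{B}_{\leq n-1}\otimes_\Lambda X\longrightarrow \mathbb{B}\otimes_\Lambda X\stackrel{\rho_n}\longrightarrow \mathbb{B}_{\geq n}\otimes_\Lambda X\longrightarrow 0\qquad (n\geq 1)
\]
of complexes of $\Lambda$-$\Lambda$-bimodules, which are split in each cohomological degree. Applying the additive functor $\mathcal{Y}(\Lambda, -)$ — which preserves degreewise split exact sequences and, by Remark~\ref{rem:c}, takes values in complexes of injective $\Lambda$-modules — gives for each $n$ an exact triangle in $\mathbf{K}(\Lambda\mbox{-Inj})$. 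Transporting it along the homotopy equivalence $\mathcal{Y}(\Lambda, \varepsilon\otimes_\Lambda{\rm Id}_X)$ on the middle term (Lemma~\ref{lem:Y-quasi-iso}; a homotopy inverse is $\mathcal{Y}(\Lambda, \iota_X)$, by Remark~\ref{rem:iota}) and along $\mathcal{Y}\big(\Lambda, (\varepsilon\otimes_\Lambda{\rm Id}_{\Omega_{\rm nc}^n(X)})\circ\varsigma_n^{-1}\big)$ on the right term, and using the identity $\rho_n\odot\iota_X=(\varsigma_n\odot\iota_{\Omega_{\rm nc}^n(X)})\odot\theta_X^{(n)}$ in $\mathcal{Y}$ (obtained by iterating Proposition~\ref{prop:Omega-bar} and Lemma~\ref{lem:comm-squ}, with $\varsigma_0={\rm Id}$; here $\theta_X^{(n)}\colon X\to\Omega_{\rm nc}^n(X)$ is the $n$-fold composite of the $\theta$'s) together with \eqref{equ:iota-inv}, one checks the triangle becomes
\[
\mathcal{Y}(\Lambda, \mathbb{B}_{\leq n-1}\otimes_\Lambda X)\stackrel{\vartheta^{(n)}}\longrightarrow \mathcal{Y}(\Lambda, X)\stackrel{\mathcal{Y}(\Lambda, \theta_X^{(n)})}\longrightarrow \mathcal{Y}(\Lambda, \Omega_{\rm nc}^n(X))\longrightarrow \Sigma(-),
\]
where $\vartheta^{(n)}=\mathcal{Y}(\Lambda, (\varepsilon\otimes_\Lambda{\rm Id}_X)\circ{\rm inc})$ is the $n$-th map in the colimit \eqref{equ:vartheta} defining $\vartheta_X$, and $\mathcal{Y}(\Lambda, \theta_X^{(n)})$ is the composite of the first $n$ structure maps of the sequence defining $\mathcal{SY}(\Lambda, X)$. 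These triangles are natural in $n$: the left column has the inclusions as structure maps, the middle is constant, the right column is precisely the sequence defining $\mathcal{SY}(\Lambda, X)$, and the two squares already commute as squares of cochain maps.

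The next step is to upgrade this to a levelwise homotopy equivalence of directed systems $\{{\rm Cone}(\vartheta^{(n)})\}\to\{\mathcal{Y}(\Lambda, \Omega_{\rm nc}^n(X))\}$. From the identity above one gets $\theta_X^{(n)}=\big((\varepsilon\otimes_\Lambda{\rm Id})\odot\varsigma_n^{-1}\big)\odot\rho_n\odot\iota_X$ in $\mathcal{Y}$, and since $\rho_n\circ{\rm inc}=0$ and $\iota_X\odot(\varepsilon\otimes_\Lambda{\rm Id}_X)\simeq{\rm Id}$ it follows that $\mathcal{Y}(\Lambda, \theta_X^{(n)})\circ\vartheta^{(n)}$ is null-homotopic. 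A coherent choice of null-homotopies $c_n$ (arranged recursively so that $c_{n+1}\circ\mathcal{Y}(\Lambda, {\rm inc})=\mathcal{Y}(\Lambda, \theta_{\Omega_{\rm nc}^n(X)})\circ c_n$) then produces cochain maps $\psi_n\colon {\rm Cone}(\vartheta^{(n)})\to\mathcal{Y}(\Lambda, \Omega_{\rm nc}^n(X))$ filling the triangles — hence homotopy equivalences — and commuting with the structure maps. Because mapping cones commute with filtered colimits and ${\rm colim}_n\vartheta^{(n)}=\vartheta_X$, taking colimits yields a cochain map
\[
\Psi\colon {\rm Cone}(\vartheta_X)={\rm colim}_n\,{\rm Cone}(\vartheta^{(n)})\longrightarrow {\rm colim}_n\,\mathcal{Y}(\Lambda, \Omega_{\rm nc}^n(X))=\mathcal{SY}(\Lambda, X)
\]
which by construction intertwines the canonical morphism $\mathcal{Y}(\Lambda, X)\to{\rm Cone}(\vartheta_X)$ with the canonical map $\mathcal{Y}(\Lambda, X)\to\mathcal{SY}(\Lambda, X)$, $f\mapsto[f;0]$.

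The main obstacle is to show that $\Psi$ is a homotopy equivalence, i.e.\ that a sequential colimit of a levelwise homotopy equivalence between directed systems of complexes of injective $\Lambda$-modules with degreewise monic structure maps is again a homotopy equivalence. This is precisely where the hypothesis that $\Lambda$ is left noetherian enters: it guarantees that $D:={\rm Cone}(\Psi)={\rm colim}_n\,{\rm Cone}(\psi_n)$, a filtered colimit of contractible complexes, is again a complex of injective modules, and that the structure maps of $\{{\rm Cone}(\psi_n)\}$ are degreewise monomorphisms (built from the inclusions on $\{{\rm Cone}(\vartheta^{(n)})\}$ and from the $\mathcal{Y}(\Lambda, \theta_{\Omega_{\rm nc}^n(X)})$, which are degreewise monic by inspection of the filtration-degree grading of $\mathcal{Y}$). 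Consequently the tower $\{{\rm Hom}_\Lambda({\rm Cone}(\psi_n), D)\}$, whose inverse limit is ${\rm Hom}_\Lambda(D, D)$, has degreewise surjective transition maps, so $\varprojlim^1$ vanishes and ${\rm Hom}_\Lambda(D, D)=\varprojlim_n {\rm Hom}_\Lambda({\rm Cone}(\psi_n), D)$ is acyclic, being an inverse limit of acyclic complexes; hence $D$ is contractible by Lemma~\ref{lem:contra}, i.e.\ $\Psi$ is a homotopy equivalence. Feeding the homotopy equivalence ${\rm Cone}(\vartheta_X)\xrightarrow{\ \sim\ }\mathcal{SY}(\Lambda, X)$ into the standard triangle on $\vartheta_X$ yields the asserted exact triangle with middle arrow the canonical map. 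What remains is the (routine but delicate) bookkeeping that pins the maps down as $\vartheta_X$ and the canonical one, together with the two degreewise injectivity/monomorphism verifications just mentioned.
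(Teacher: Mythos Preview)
Your overall plan uses the same ingredients as the paper --- the degreewise split filtration of $\mathbb{B}$, the identity from Proposition~\ref{prop:Omega-bar}, and a ``colimit of levelwise homotopy equivalences'' principle --- but the step labeled ``a coherent choice of null-homotopies $c_n$ (arranged recursively \dots)'' is a genuine gap. To extend $\mathcal{Y}(\Lambda,\theta_{\Omega^n(X)})\circ c_n$ from $\mathcal{Y}(\Lambda,\mathbb{B}_{\leq n-1}\otimes_\Lambda X)$ to a null-homotopy $c_{n+1}$ on $\mathcal{Y}(\Lambda,\mathbb{B}_{\leq n}\otimes_\Lambda X)$ you must show that the degree $-1$ cocycle $c''\!\circ j_n-\mathcal{Y}(\Lambda,\theta_{\Omega^n(X)})\circ c_n$ (for any null-homotopy $c''$ of $\mathcal{Y}(\Lambda,\theta_X^{(n+1)})\circ\vartheta^{(n+1)}$) lifts along the restriction $j_n^*$ to a cocycle upstairs. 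Degreewise splitness of $j_n$ gives surjectivity on Hom groups, not on cocycles; the obstruction lives in cohomology of a Hom complex and you give no argument for its vanishing. Without this, you do not have strictly commuting cochain maps $\psi_n$, so neither the colimit $\Psi$ nor your $\varprojlim^1$ argument on ${\rm Cone}(\psi_n)$ gets off the ground.

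The paper sidesteps this coherence problem entirely by inserting the intermediate system $\{\mathcal{Y}(\Lambda,\mathbb{B}_{\geq p}\otimes_\Lambda X)\}$. Proposition~\ref{prop:Omega-bar} is a \emph{strict} identity in $\mathcal{Y}$, so after applying $\mathcal{Y}(\Lambda,-)$ one obtains a ladder in $C(\Lambda\mbox{-Inj})$ that commutes on the nose, with vertical maps $\mathcal{Y}(\Lambda,\varsigma_p\circ\iota_{\Omega^p(X)})$ that are homotopy equivalences by Remark~\ref{rem:iota}. One then takes colimits (Lemma~\ref{lem:colimit-inj}, which is essentially your $\varprojlim^1$ argument, packaged via the telescope triangle) to get $\mathcal{SY}(\Lambda,X)\simeq {\rm colim}\,\mathcal{Y}(\Lambda,\mathbb{B}_{\geq p}\otimes_\Lambda X)$. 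The exact triangle now comes directly from the colimit of the short exact sequences $0\to\mathcal{Y}(\Lambda,\mathbb{B}_{<p}\otimes_\Lambda X)\to\mathcal{Y}(\Lambda,\mathbb{B}\otimes_\Lambda X)\to\mathcal{Y}(\Lambda,\mathbb{B}_{\geq p}\otimes_\Lambda X)\to 0$, and one substitutes the middle two terms via the square \eqref{comm:Y-Omega}; the first map becomes $\vartheta_X$ because $\mathcal{Y}(\Lambda,\iota_X)^{-1}=\mathcal{Y}(\Lambda,\varepsilon\otimes_\Lambda{\rm Id}_X)$ in $\mathbf{K}(\Lambda\mbox{-Inj})$. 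In short: rather than building maps out of cones (which forces you to choose and cohere homotopies), the paper compares the $\Omega_{\rm nc}^p$-tower with the $\mathbb{B}_{\geq p}$-tower, where the comparison is strict, and only afterwards passes to the triangle.
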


\begin{proof}
We apply $\mathcal{Y}(\Lambda, -)$ to the commutative diagram in Proposition~\ref{prop:Omega-bar}, and obtain the following commutative square in $C(\Lambda\mbox{-Inj})$.
\[\xymatrix{
\mathcal{Y}(\Lambda, \Omega_{\rm nc}^p(X)) \ar[rr]^-{\mathcal{Y}(\Lambda, \theta_{\Omega_{\rm nc}^p(X)})} \ar[d]_-{\mathcal{Y}(\Lambda, \varsigma_p\circ \iota_{\Omega_{\rm nc}^p(X)})} && \mathcal{Y}(\Lambda, \Omega_{\rm nc}^{p+1}(X))  \ar[d]^-{\mathcal{Y}(\Lambda, \varsigma_{p+1}\circ \iota_{\Omega_{\rm nc}^{p+1}(X)})}\\
\mathcal{Y}(\Lambda, \mathbb{B}_{\geq p}\otimes_\Lambda X) \ar[rr]^-{\mathcal{Y}(\Lambda, \pi_p\otimes_\Lambda {\rm Id}_X)} && \mathcal{Y}(\Lambda, \mathbb{B}_{\geq p+1}\otimes_\Lambda X)
}
\]
Recall that $\varsigma_p$ is an isomorphism. By Remark~\ref{rem:iota}, we infer that the vertical arrows are both homotopy equivalences. Taking the colimits along the horizontal maps, we obtain the following commutative diagram
\begin{equation}\label{comm:Y-Omega}
\xymatrix{
\mathcal{Y}(\Lambda, X) \ar[d]^-{\simeq}_-{\mathcal{Y}(\Lambda, \iota_X)} \ar[rr] &&\mathcal{SY}(\Lambda, X) \ar[d]_{\simeq}^-{{\rm colim}\; \mathcal{Y}(\Lambda, \varsigma_p\circ \iota_{\Omega_{\rm nc}^p(X)})}\\
\mathcal{Y}(\Lambda, \mathbb{B}\otimes_\Lambda X) \ar[rr] && {\rm colim}\; \mathcal{Y}(\Lambda, \mathbb{B}_{\geq p}\otimes_\Lambda X)
}
\end{equation}
By Lemma~\ref{lem:colimit-inj} below, the vertical arrow on the right side is a homotopy equivalence.

For each $p\geq 0$, we have an exact sequence of complexes as follows.
$$0\longrightarrow \mathcal{Y}(\Lambda, \mathbb{B}_{< p}\otimes_\Lambda X)  \xrightarrow{\mathcal{Y}(\Lambda, {\rm inc})}  \mathcal{Y}(\Lambda, \mathbb{B}\otimes_\Lambda X) \xrightarrow{\mathcal{Y}(\Lambda, {\rm pr})}  \mathcal{Y}(\Lambda, \mathbb{B}_{\geq p}\otimes_\Lambda X)\longrightarrow 0$$
Here, ``pr" denotes the projection. Letting $p$ vary and taking the colimits, we obtain an exact sequence of complexes of injective modules
$$0\longrightarrow {\rm colim}\; \mathcal{Y}(\Lambda, \mathbb{B}_{< p}\otimes_\Lambda X)\longrightarrow  \mathcal{Y}(\Lambda, \mathbb{B}\otimes_\Lambda X)  \longrightarrow {\rm colim}\; \mathcal{Y}(\Lambda, \mathbb{B}_{\geq p}\otimes_\Lambda X)\longrightarrow 0.$$
It induces an exact triangle in $\mathbf{K}(\Lambda\mbox{-}{\rm Inj})$:
$${\rm colim}\; \mathcal{Y}(\Lambda, \mathbb{B}_{< p}\otimes_\Lambda X)\rightarrow  \mathcal{Y}(\Lambda, \mathbb{B}\otimes_\Lambda X)  \rightarrow {\rm colim}\; \mathcal{Y}(\Lambda, \mathbb{B}_{\geq p}\otimes_\Lambda X)\rightarrow \Sigma({\rm colim}\; \mathcal{Y}(\Lambda, \mathbb{B}_{< p}\otimes_\Lambda X)).$$
We use the commutative diagram (\ref{comm:Y-Omega}) to replace the middle two terms in the above triangle, and obtain the required one. Here, we use
$${\rm colim}\; \mathcal{Y}(\Lambda, \mathbb{B}_{< p}\otimes_\Lambda X)={\rm colim}\; \mathcal{Y}(\Lambda, \mathbb{B}_{\leq p}\otimes_\Lambda X);$$
moreover,  we need the fact that in $\mathbf{K}(\Lambda\mbox{-Inj})$, we have $\mathcal{Y}(\Lambda, \iota_X)^{-1}= \mathcal{Y}(\Lambda, \varepsilon\otimes_\Lambda {\rm Id}_X)$; see Remark~\ref{rem:iota}.
\end{proof}

The following result is standard.

\begin{lem}\label{lem:colimit-inj}
Assume that $\Lambda$ is left noetherian. Suppose that we are given a commutative diagram in $C(\Lambda\mbox{-}{\rm Inj})$ with each $g_p$  a homotopy equivalence.
\[\xymatrix{
I_0 \ar[d]_-{g_0} \ar[r]^{\phi_0} & I_1  \ar[d]^-{g_1} \ar[r]^-{\phi_1} & I_2 \ar[d]^-{g_2}\ar[r] & \cdots \ar[r] & I_p  \ar[d]^-{g_p}\ar[r]^-{\phi_p} & I_{p+1} \ar[d]^-{g_{p+1}} \ar[r] & \cdots\\
J_0\ar[r]^{\psi_0} & J_1 \ar[r]^-{\psi_1} & J_2\ar[r] & \cdots \ar[r] & J_p \ar[r]^-{\psi_p} & J_{p+1}\ar[r] & \cdots
}\]
 Then  the induced map
$${\rm colim} \; g_p\colon {\rm colim} \; I_p\longrightarrow {\rm colim} \; J_p$$
is a homotopy equivalence.
\end{lem}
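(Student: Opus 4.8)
The plan is to realise each sequential colimit as the cokernel of an explicit self-map of a coproduct, thereby producing an exact triangle in $\mathbf{K}(\Lambda\mbox{-Inj})$, and then to run the five-lemma for triangulated categories.

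First I would use the telescope presentation of a colimit over $\mathbb{N}$: for the sequence $I_0 \xrightarrow{\phi_0} I_1 \xrightarrow{\phi_1} I_2 \to \cdots$ there is a short exact sequence of complexes of $\Lambda$-modules
\[
0 \longrightarrow \bigoplus_{p\ge 0} I_p \xrightarrow{\;1-s\;} \bigoplus_{p\ge 0} I_p \longrightarrow \mathrm{colim}_p\, I_p \longrightarrow 0,
\]
where $s$ maps the $p$-th summand into the $(p+1)$-st via $\phi_p$; this is the usual description of a filtered colimit over $\mathbb{N}$, carried out in each cohomological degree. Since $\Lambda$ is left noetherian, the coproduct $\bigoplus_{p\ge 0} I_p^n$ is an injective $\Lambda$-module for every $n$, and an injective submodule is always a direct summand; hence the above sequence is split in each degree. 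Therefore it is termwise split and induces an exact triangle
\[
\bigoplus_{p\ge 0} I_p \xrightarrow{\;1-s\;} \bigoplus_{p\ge 0} I_p \longrightarrow \mathrm{colim}_p\, I_p \longrightarrow \Sigma \bigoplus_{p\ge 0} I_p
\]
in $\mathbf{K}(\Lambda\mbox{-Inj})$, and likewise for $(J_p,\psi_p)$. (Left noetherianness also ensures that $\mathrm{colim}_p\, I_p$ and $\mathrm{colim}_p\, J_p$ are complexes of injective modules, so that everything takes place inside $C(\Lambda\mbox{-Inj})$.)

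Next I would assemble a morphism of triangles. The commutativity of the given ladder says precisely that $g_{p+1}\circ\phi_p = \psi_p\circ g_p$, so $\bigoplus_{p\ge 0} g_p$ commutes with the two maps $1-s$; together with the canonical maps to the colimits, this yields a morphism of the two exact triangles above whose first two vertical arrows are both $\bigoplus_{p\ge 0} g_p$ and whose third is $\mathrm{colim}_p\, g_p$. A coproduct of homotopy equivalences is a homotopy equivalence — just take the coproduct of chosen homotopy inverses and of the contracting homotopies — so $\bigoplus_{p\ge 0} g_p$ is an isomorphism in $\mathbf{K}(\Lambda\mbox{-Inj})$. By the five-lemma for triangulated categories, $\mathrm{colim}_p\, g_p$ is then an isomorphism in $\mathbf{K}(\Lambda\mbox{-Inj})$, i.e.\ a homotopy equivalence.

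I do not anticipate a genuine obstacle; the content is routine once the telescope triangle is available. The one point that really uses the hypotheses is the termwise splitness of the telescope sequence, which rests on the fact that over a left noetherian ring a coproduct of injective modules is injective (and an injective submodule splits off); dropping this hypothesis would leave only a short exact sequence of complexes, not a triangle in $\mathbf{K}(\Lambda\mbox{-Inj})$. It is worth noting that no coherence among the homotopy inverses of the $g_p$ across $p$ is needed, precisely because one forms their coproduct rather than their colimit.
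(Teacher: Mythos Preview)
Your proposal is correct and follows essentially the same route as the paper: both realize the colimit via the telescope short exact sequence $0\to\bigoplus_p I_p\xrightarrow{1-s}\bigoplus_p I_p\to\mathrm{colim}_p I_p\to 0$, use left noetherianness to see it is degreewise split (hence a triangle in $\mathbf{K}(\Lambda\mbox{-Inj})$), and then compare the two resulting triangles via $\bigoplus_p g_p$. Your explicit justification of the degreewise splitting (injective submodule is a summand) is a detail the paper leaves implicit, but otherwise the arguments coincide.
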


\begin{proof}
We observe that the following exact sequence of complexes
$$0\longrightarrow \bigoplus_{p\geq 0} I_p \stackrel{\mathbf{1}-\phi}\longrightarrow \bigoplus_{p\geq 0} I_p \longrightarrow {\rm colim} \; I_p\longrightarrow 0$$
is componentwise split, where $\mathbf{1}-\phi$ is the unique map whose restriction on $I_p$ is given by $\binom{\mathbf{1}}{-\phi_p}\colon I_p\rightarrow I_p\oplus I_{p+1}$. Here, we use the fact that  $\bigoplus_{p\geq 0} I_p$ lies in $C(\Lambda\mbox{-}{\rm Inj})$, as $\Lambda$ is left noetherian.
In particular,  ${\rm colim} \; I_p$ also lies in $C(\Lambda\mbox{-}{\rm Inj})$. So we have an induced exact triangle in $\mathbf{K}(\Lambda\mbox{-Inj})$, as shown in the upper row of the following commutative diagram; compare \cite[Section~3.4]{Kra22}. Similarly, we have the lower exact triangle.
\[
\xymatrix{
\bigoplus_{p\geq 0} I_p \ar[d]_-{\bigoplus_{p\geq 0}g_p} \ar[r]^-{\mathbf{1}-\phi} &  \bigoplus_{p\geq 0} I_p \ar[d]^-{\bigoplus_{p\geq 0}g_p} \ar[r] & {\rm colim} \; I_p  \ar[d]^-{{\rm colim}\; g_p}\ar[r] & \Sigma (\bigoplus_{p\geq 0} I_p) \ar[d]^-{\Sigma(\bigoplus_{p\geq 0}g_p)}\\
\bigoplus_{p\geq 0} J_p  \ar[r]^-{\mathbf{1}-\psi} &  \bigoplus_{p\geq 0} J_p \ar[r] & {\rm colim} \; J_p \ar[r] & \Sigma (\bigoplus_{p\geq 0} J_p)
}\]
Since $\bigoplus_{p\geq 0}g_p$ is an isomorphism in $\mathbf{K}(\Lambda\mbox{-Inj})$, it follows that ${\rm colim}\; g_p$ is also an isomorphism in $\mathbf{K}(\Lambda\mbox{-Inj})$, as required.
\end{proof}

\section{The stabilization functor}

In  this  section, we describe the stabilization functor \cite{Kra} via the mapping cone of an explicit quasi-isomorphism; see Theorem~\ref{thm:S}.  We will assume that $\Lambda$ is left noetherian.

Recall from \cite{Kra} the following recollement.
\begin{equation}\label{rec:Kra}
\xymatrix{
\mathbf{K}_{\rm ac}(\Lambda\mbox{-Inj})  \ar[rr]^-{\rm inc} &&  \ar@/_1.5pc/[ll]|{\bar{\bf a}}  \ar@/^1.5pc/[ll]|{{\bf a}'} \mathbf{K}(\Lambda\mbox{-Inj})  \ar[rr]^{\rm can} && \mathbf{D}(\Lambda\mbox{-Mod}) \ar@/_1.5pc/[ll]|{\bar{\bf p}}  \ar@/^1.5pc/[ll]|{\bf i}
}
\end{equation}
Here, the lower part is the restriction of the one in (\ref{rec:calrevised}), so we use the same notation and we have $\mathbf{a'}= {\rm Hom}_\Lambda({\rm Cone}(\varepsilon), -)$ and $\mathbf{i} = {\rm Hom}_\Lambda(\mathbb{B}, -)\simeq  \mathcal{Y}(\Lambda, -)$; see Proposition~\ref{prop:Y-i}.
The functors in the upper row are nontrivial.

The following definition is taken from \cite[Section~5]{Kra}.

\begin{defn}\label{defn:S}
The \emph{stabilization functor} of $\Lambda$ is defined to be the composition
$$\mathbb{S}=\bar{\bf a}{\bf i}\colon \mathbf{D}(\Lambda\mbox{-}{\rm Mod})\longrightarrow \mathbf{K}_{\rm ac}(\Lambda\mbox{-}{\rm Inj}).$$
\end{defn}
We mention that by \cite[1.4.6]{BBD} or \cite[Subsection~2.1]{CL}, $\mathbb{S}$ is isomorphic to the composition $\Sigma {\bf a}'\bar{\bf p}$. As pointed out in Introduction, $\mathbb{S}$ is a triangulated analogue of the gluing functor in the dg setting \cite[Subsections~2.2 and~4.2]{KL}; for  a related $\infty$-categorical consideration, we refer to  \cite{Lur, DJW}.

Recall from Remark~\ref{rem:exp-res-bi} the embedding $\eta_\Lambda\colon \Lambda \rightarrow \mathcal{Y}(\Lambda, \Lambda)$ of complexes of $\Lambda$-$\Lambda$-bimodules. For any complex $X$, we denote by ${\bf p}(X) = \mathbb B \otimes_\Lambda X$ its dg-projective resolution. As $\Lambda$ is left noetherian, it follows that the complex $\mathcal{Y}(\Lambda, \Lambda)\otimes_\Lambda {\bf p}(X)$ consists of injective $\Lambda$-modules since $\mathcal{Y}(\Lambda, \Lambda)$ is a complex of injective modules; see Remark \ref{rem:c}.

The following result describes the functor $\bar{\bf p}$ in (\ref{rec:Kra}) explicitly.

\begin{prop}\label{prop:bar-p}
There is a natural isomorphism $\bar{\bf p}(X)\simeq \mathcal{Y}(\Lambda, \Lambda)\otimes_\Lambda {\bf p}(X)$ in $\mathbf{K}(\Lambda\mbox{-}{\rm Inj})$ for any complex $X$ of $\Lambda$-modules.
\end{prop}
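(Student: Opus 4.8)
The plan is to pin down $\bar{\bf p}(X)$ by its universal property. Recall that $\bar{\bf p}$ is the left adjoint of $\mathrm{can}\colon \mathbf{K}(\Lambda\mbox{-Inj})\to \mathbf{D}(\Lambda\mbox{-Mod})$; thus it suffices to exhibit a complex of injective modules, depending functorially on $X$, that represents the functor $J\mapsto {\rm Hom}_{\mathbf{D}(\Lambda\mbox{-Mod})}(X, \mathrm{can}(J))$ on $\mathbf{K}(\Lambda\mbox{-Inj})$. I would take $I(X):=\mathcal{Y}(\Lambda, \Lambda)\otimes_\Lambda {\bf p}(X)$, which is a complex of injective $\Lambda$-modules by the discussion preceding the statement. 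Since $\eta_\Lambda\colon \Lambda\to \mathcal{Y}(\Lambda, \Lambda)$ is a homotopy equivalence of right $\Lambda$-modules (Remark~\ref{rem:exp-res-bi}), the map $\xi_X:=\eta_\Lambda\otimes_\Lambda {\rm Id}_{{\bf p}(X)}\colon {\bf p}(X)\to I(X)$ is a quasi-isomorphism; together with the quasi-isomorphism ${\bf p}(X)\to X$ it exhibits an isomorphism $\phi_X\colon X\xrightarrow{\sim}\mathrm{can}(I(X))$ in $\mathbf{D}(\Lambda\mbox{-Mod})$, natural in $X$. Under the adjunction, $\phi$ corresponds to a natural transformation $\bar{\bf p}\Rightarrow I$, and it remains to check that for every complex $J$ of injective modules $\phi_X$ induces a bijection ${\rm Hom}_{\mathbf{K}(\Lambda\mbox{-Inj})}(I(X), J)\xrightarrow{\sim} {\rm Hom}_{\mathbf{D}(\Lambda\mbox{-Mod})}(X, \mathrm{can}(J))$; by Yoneda's lemma in $\mathbf{K}(\Lambda\mbox{-Inj})$ this forces $\bar{\bf p}\Rightarrow I$ to be a natural isomorphism.

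Next I would reduce this to a statement about Hom-complexes. As $\mathbf{K}(\Lambda\mbox{-Inj})$ is a full subcategory of $\mathbf{K}(\Lambda\mbox{-Mod})$ and ${\bf p}(X)$ is $\mathbf{K}$-projective, one has ${\rm Hom}_{\mathbf{D}(\Lambda\mbox{-Mod})}(X, \mathrm{can}(J))\simeq {\rm Hom}_{\mathbf{K}(\Lambda\mbox{-Mod})}({\bf p}(X), J)=H^0{\rm Hom}_\Lambda({\bf p}(X), J)$, while the left-hand side equals $H^0{\rm Hom}_\Lambda(I(X), J)$, and the bijection in question becomes $H^0$ of the chain map ${\rm Hom}_\Lambda(\xi_X, J)\colon {\rm Hom}_\Lambda(I(X), J)\to {\rm Hom}_\Lambda({\bf p}(X), J)$. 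So it is enough to show that ${\rm Hom}_\Lambda(\xi_X, J)$ is a quasi-isomorphism for every complex $J$ of injective modules (this also handles the shifts of $J$, making everything natural). By the Hom-tensor adjunction, ${\rm Hom}_\Lambda(\xi_X, J)$ is identified with ${\rm Hom}_\Lambda({\bf p}(X), {\rm Hom}_\Lambda(\eta_\Lambda, J))$, where ${\rm Hom}_\Lambda(\eta_\Lambda, J)\colon {\rm Hom}_\Lambda(\mathcal{Y}(\Lambda, \Lambda), J)\to {\rm Hom}_\Lambda(\Lambda, J)=J$. Since ${\bf p}(X)$ is $\mathbf{K}$-projective, ${\rm Hom}_\Lambda({\bf p}(X), -)$ preserves quasi-isomorphisms, so it suffices to show that ${\rm Hom}_\Lambda(\eta_\Lambda, J)$ is a quasi-isomorphism for every complex $J$ of injective modules.

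This last point is the crux, and I expect it to be the main obstacle. Since $\eta_\Lambda$ is a quasi-isomorphism, ${\rm Cone}(\eta_\Lambda)$ is acyclic; and since $\mathcal{Y}(\Lambda, \Lambda)$ is the dg-injective resolution of the module $\Lambda$, it is concentrated in non-negative degrees, so ${\rm Cone}(\eta_\Lambda)$ is a bounded-below acyclic complex (living in degrees $\geq -1$). For any bounded-below acyclic complex $C$ and any complex $J$ of injective modules, ${\rm Hom}_\Lambda(C, J)$ is acyclic: in the bicomplex $({\rm Hom}_\Lambda(C^p, J^q))$ each row in the $C$-direction is acyclic, because ${\rm Hom}_\Lambda(-, J^q)$ is exact and $C$ is exact, and the bounded-below hypothesis on $C$ guarantees convergence of the spectral sequence computing the cohomology of the (product) total complex ${\rm Hom}_\Lambda(C, J)$, which is therefore acyclic. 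Applying this with $C={\rm Cone}(\eta_\Lambda)$ shows ${\rm Hom}_\Lambda({\rm Cone}(\eta_\Lambda), J)$ is acyclic, hence ${\rm Hom}_\Lambda(\eta_\Lambda, J)$ is a quasi-isomorphism, as needed; combined with the previous steps and Yoneda's lemma this yields the natural isomorphism $\bar{\bf p}(X)\simeq \mathcal{Y}(\Lambda, \Lambda)\otimes_\Lambda{\bf p}(X)$ in $\mathbf{K}(\Lambda\mbox{-Inj})$. I would emphasize that routing through the Hom-tensor adjunction in the second paragraph is essential: working directly with ${\rm Hom}_\Lambda(\xi_X, J)$ fails, since ${\rm Cone}(\xi_X)\simeq {\rm Cone}(\eta_\Lambda)\otimes_\Lambda {\bf p}(X)$ need not be bounded below (whereas ${\rm Cone}(\eta_\Lambda)$ is), and the factor ${\bf p}(X)$ must be absorbed by the $\mathbf{K}$-projectivity argument rather than appearing inside the $\mathrm{Hom}$ against $J$.
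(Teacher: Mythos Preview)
Your proof is correct and follows essentially the same route as the paper: both arguments identify $\bar{\bf p}$ via its left-adjoint universal property, use the Hom-tensor adjunction together with the $\mathbf{K}$-projectivity of ${\bf p}(X)$ to reduce to showing that ${\rm Hom}_\Lambda(\eta_\Lambda, J)$ is a quasi-isomorphism for every complex $J$ of injectives. The only difference is that the paper dispatches this last step by citing \cite[Lemma~2.1]{Kra}, whereas you supply a direct argument using the bounded-belowness of ${\rm Cone}(\eta_\Lambda)$; your spectral-sequence justification is correct (the filtration by $J$-degree is degreewise bounded below, complete and Hausdorff, with exact associated graded), though the same conclusion can also be reached by an elementary inductive construction of a null-homotopy.
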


\begin{proof}
Take any complex $I$ of injective modules. We first observe that the following map of complexes of $\Lambda$-modules
$${\rm Hom}_\Lambda(\eta_\Lambda, I)\colon   {\rm Hom}_\Lambda(\mathcal{Y}(\Lambda, \Lambda), I)\longrightarrow {\rm Hom}_\Lambda(\Lambda, I)=I$$
is a quasi-isomorphism. Indeed, according to Lemma~\ref{lem:injres}, we  identify $\mathcal{Y}(\Lambda, \Lambda)$ with ${\bf i}(\Lambda)$. Then we apply \cite[Lemma~2.1]{Kra}.

 We have the following quasi-isomorphisms of complexes.
 \begin{align*}
 {\rm Hom}_\Lambda(\mathcal{Y}(\Lambda, \Lambda)\otimes_\Lambda {\bf p}(X), I)\simeq {\rm Hom}_\Lambda({\bf p}(X), {\rm Hom}_\Lambda(\mathcal{Y}(\Lambda, \Lambda), I)) \simeq {\rm Hom}_\Lambda({\bf p}(X), I)
 \end{align*}
 Here, the map on the right hand side is given by ${\rm Hom}_\Lambda({\bf p}(X), {\rm Hom}_\Lambda(\eta_\Lambda, I))$; it is indeed a quasi-isomorphism, since ${\bf p}(X)$ is dg-projective and ${\rm Hom}_\Lambda(\eta_\Lambda, I)$ is a quasi-isomorphism. Recall that ${\rm Hom}_\Lambda({\bf p}(X), I)
$ computes ${\rm Hom}_{\mathbf{D}(\Lambda\mbox{-}{\rm Mod})}(X, I)$.  Applying $H^0(-)$ to the composite quasi-isomorphism above, we prove that $\mathcal{Y}(\Lambda, \Lambda)\otimes_\Lambda {\bf p}-$ is left adjoint to the canonical functor ``${\rm can}$''.
\end{proof}

Although the functor $\bar{\bf p}$ is explicitly given, we generally  do not have an explicit description for the relevant counit of the adjoint pair $(\bar{\bf p}, {\rm can})$, as explained below.

\begin{rem}\label{rem:counit}
 By the proposition above, we identify $\bar{\bf p}$ with $\mathcal{Y}(\Lambda, \Lambda)\otimes_\Lambda {\bf p}-$. Take  any complex $I$ of injective modules. Denote by $\pi_I\colon {\bf p}(I)\rightarrow I$ the dg-projective resolution. As ${\rm Hom}_\Lambda(\eta_\Lambda, I)$ is a surjective quasi-isomorphism, there is a cochain map $\xi\colon {\bf p}(I)\rightarrow {\rm Hom}_\Lambda(\mathcal{Y}(\Lambda, \Lambda), I)$ such that ${\rm Hom}_\Lambda(\eta_\Lambda, I)\circ \xi=\pi_I$ in the category $C(\Lambda\mbox{-}{\rm Mod})$. By the Hom-tensor adjunction, $\xi$ corresponds to the counit
$$u_I\colon \mathcal{Y}(\Lambda, \Lambda)\otimes_\Lambda {\bf p}(I) \longrightarrow I.$$
As $\xi$ is not explicit, we can not describe the counit $u_I$ explicitly.

However, by chasing the diagram, one proves that the following triangle
\begin{equation}\label{comm-tri}
\xymatrix{
{\bf p}(I)=\Lambda\otimes_\Lambda {\bf p} (I)\ar[dr]_-{\pi_I} \ar[rr]^-{\eta_\Lambda\otimes_\Lambda {\rm Id}_{{\bf p} (I)}} && \mathcal{Y}(\Lambda, \Lambda)\otimes_\Lambda {\bf p}(I) \ar[dl]^-{u_I}\\
& I
}
\end{equation}
commutes in $C(\Lambda\mbox{-}{\rm Mod})$.  Since $\eta_\Lambda$ is a homotopy equivalence on the right side, $\eta_\Lambda\otimes_\Lambda {\rm Id}_{{\bf p} (I)} $ is a quasi-isomorphism. It follows that so is $u_I$.  We mention that if $I$ is dg-injective, the above commutative triangle determines $u_I$ up to homotopy. We just use the fact that ${\rm Hom}_\Lambda({\rm Cone}(\eta_\Lambda\otimes_\Lambda {\rm Id}_{{\bf p} (I)}), I)$ is acyclic,  since ${\rm Cone}(\eta_\Lambda\otimes_\Lambda {\rm Id}_{{\bf p} (I)})$ is acyclic.
\end{rem}

To calculate the stabilization functor $\mathbb{S}$, we need the following well-known fact; compare \cite[the second paragraph in the proof of Lemma~3.1]{Bon} and \cite[Proposition~3.2.8]{Kra22}.

\begin{rem}\label{rem:unique-S}
 In the recollement (\ref{rec:Kra}), we have a functorial exact triangle in $\mathbf{K}(\Lambda\mbox{-Inj})$
\begin{align}\label{tri:Kra}
\bar{\bf p}(I)\stackrel{u_I}\longrightarrow I \longrightarrow \bar{\bf a}(I) \longrightarrow \Sigma \bar{\bf p}(I),
\end{align}
where $I$ is any complex of injective modules and  $u_I$ is the counit in Remark~\ref{rem:counit}; this triangle is unique.  Take any complex $X$ of $\Lambda$-modules. Suppose that there exists an exact triangle in $\mathbf{K}(\Lambda\mbox{-Inj})$:
$$I_1\longrightarrow {\bf i}(X)\longrightarrow I_2\longrightarrow \Sigma(I_1),$$
with $I_1\in {\rm Im}(\bar{\bf p})$ and $I_2\in \mathbf{K}_{\rm ac}(\Lambda\mbox{-Inj})$. Then there are unique isomorphisms $g_1\colon I_1\rightarrow \bar{\bf p}{\bf i}(X)$ and $g_2\colon I_2\rightarrow \mathbb{S}(X)$ making the following diagram commute.
\[
\xymatrix{
I_1\ar[r] \ar[d]_-{g_1} & {\bf i}(X) \ar@{=}[d]  \ar[r] & I_2 \ar[d]_-{g_2} \ar[r] & \Sigma(I_1) \ar[d]^-{\Sigma(g_1)}\\
\bar{\bf p}{\bf i}(X) \ar[r]^-{u_{{\bf i}(X)}} & {\bf i}(X) \ar[r] & \mathbb{S}(X) \ar[r]& \Sigma \bar{\bf p}{\bf i}(X)
}\]
Here, the lower exact triangle is obtained by applying (\ref{tri:Kra}) to ${\bf i}(X)$.
\end{rem}

For each complex $X$ of $\Lambda$-modules, we will consider the following composition:
\begin{align}\label{equ:kappa}
\kappa_X\colon \mathcal{Y}(\Lambda, \Lambda)\otimes_\Lambda \mathbb{B}\otimes_\Lambda X\xrightarrow{{\rm Id}_{\mathcal{Y}(\Lambda, \Lambda)}\otimes_\Lambda (\varepsilon\otimes_\Lambda {\rm Id}_X)} \mathcal{Y}(\Lambda, \Lambda)\otimes_\Lambda X\xrightarrow{\epsilon_X} \mathcal{Y}(\Lambda, X),
\end{align}
where $\epsilon_X$ is given in (\ref{equ:epsilon}). For a typical element $f\otimes_\Lambda (a_0\otimes s\bar{a}_{1, q}\otimes 1)\otimes_\Lambda x\in \mathcal{Y}(\Lambda, \Lambda)\otimes_\Lambda \mathbb{B}\otimes_\Lambda X$ with $f\in \mathcal{Y}_p(\Lambda, \Lambda)$, we have
$$\kappa_X(f\otimes_\Lambda (a_0\otimes s\bar{a}_{1, q}\otimes 1)\otimes_\Lambda x) \in \mathcal{Y}_{p}(\Lambda, X),$$
which sends $s\bar{b}_{1, p}\otimes b\in (s\bar{\Lambda})^{\otimes p}\otimes \Lambda$ to $\delta_{q, 0}f(s\bar{b}_{1, p}\otimes b)a_0x\in X$. Here, $\delta_{q, 0}$ is the Kronecker symbol.

Recall from Remark~\ref{rem:exp-res-bi} that $\mathcal{Y}(\Lambda, \Lambda)$ is homotopy equivalent to $\Lambda$ on the right side,  and by Lemma~\ref{lem:projres}, $\varepsilon\otimes_\Lambda {\rm Id}_X$ is a quasi-isomorphism. It follows that ${\rm Id}_{\mathcal{Y}(\Lambda, \Lambda)}\otimes_\Lambda (\varepsilon\otimes_\Lambda {\rm Id}_X)$ is a quasi-isomorphism. Since $\epsilon_X$ is also a quasi-isomorphism, we infer that so is $\kappa_X$. We conclude that ${\rm Cone}(\kappa_X)$ is an acyclic complex of injective $\Lambda$-modules. Consequently, we have a well-defined dg functor
$${\rm Cone}(\kappa_{-})\colon C_{\rm dg}(\Lambda\mbox{-Mod})\longrightarrow C_{\rm dg, ac}(\Lambda\mbox{-Inj}),$$
where $C_{\rm dg, ac}(\Lambda\mbox{-Inj})$ denotes the dg category formed by acyclic complexes of injective modules. This  dg functor induces a  well-defined triangle functor
$${\rm Cone}(\kappa_{-})\colon \mathbf{K}(\Lambda\mbox{-Mod})\longrightarrow \mathbf{K}_{\rm ac}(\Lambda\mbox{-Inj}).$$

We claim that for each quasi-isomorphism $g\colon X\rightarrow X'$, the corresponding map ${\rm Cone}(\kappa_{X})\rightarrow {\rm Cone}(\kappa_{X'})$ is a homotopy equivalence. Indeed, this map fits into the following commutative diagram of exact triangles in $\mathbf{K}(\Lambda\mbox{-Inj})$.
\[\xymatrix{
\mathcal{Y}(\Lambda, \Lambda)\otimes_\Lambda \mathbb{B}\otimes_\Lambda X \ar[r]^-{\kappa_X} \ar[d]_-{{\rm Id}_{\mathcal{Y}(\Lambda, \Lambda)\otimes_\Lambda \mathbb{B}} \otimes_\Lambda g}  & \mathcal{Y}(\Lambda, X) \ar[d]^-{\mathcal{Y}(\Lambda, g)} \ar[r]& {\rm Cone}(\kappa_{X}) \ar[d]\ar[r] & \Sigma(\mathcal{Y}(\Lambda, \Lambda)\otimes_\Lambda \mathbb{B}\otimes_\Lambda X) \ar[d]\\
\mathcal{Y}(\Lambda, \Lambda)\otimes_\Lambda \mathbb{B}\otimes_\Lambda {X'} \ar[r]^-{\kappa_{X'}} & \mathcal{Y}(\Lambda, X') \ar[r] & {\rm Cone}(\kappa_{X'}) \ar[r] & \Sigma(\mathcal{Y}(\Lambda, \Lambda)\otimes_\Lambda \mathbb{B}\otimes_\Lambda X')
}\]
The two vertical arrows on the left side are homotopy equivalences; see Lemma~\ref{lem:Y-quasi-iso}. Then the claim follows.

By the above claim, we have the following induced triangle functor
$${\rm Cone}(\kappa_{-})\colon \mathbf{D}(\Lambda\mbox{-Mod})\longrightarrow \mathbf{K}_{\rm ac}(\Lambda\mbox{-Inj}).$$

\begin{thm}\label{thm:S}
Keep the notation as above. Then we have an isomorphism of triangle functors
$${\rm Cone}(\kappa_{-})\simeq \mathbb{S}.$$
\end{thm}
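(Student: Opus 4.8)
The plan is to use the uniqueness statement in Remark~\ref{rem:unique-S}. That is, for each complex $X$ of $\Lambda$-modules I want to exhibit an exact triangle in $\mathbf{K}(\Lambda\mbox{-Inj})$ of the form
\[
I_1\longrightarrow {\bf i}(X)\longrightarrow {\rm Cone}(\kappa_X)\longrightarrow \Sigma(I_1)
\]
with $I_1\in{\rm Im}(\bar{\bf p})$, which then forces an isomorphism ${\rm Cone}(\kappa_X)\simeq\mathbb{S}(X)$, natural in $X$. Since $\kappa_X$ is a quasi-isomorphism between complexes of injective modules, the standard triangle $(\ref{tri:standard})$ gives a triangle
\[
\mathcal{Y}(\Lambda,\Lambda)\otimes_\Lambda\mathbb{B}\otimes_\Lambda X\xrightarrow{\ \kappa_X\ }\mathcal{Y}(\Lambda,X)\longrightarrow{\rm Cone}(\kappa_X)\longrightarrow\Sigma(\mathcal{Y}(\Lambda,\Lambda)\otimes_\Lambda\mathbb{B}\otimes_\Lambda X)
\]
in $\mathbf{K}(\Lambda\mbox{-Inj})$. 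The middle term $\mathcal{Y}(\Lambda,X)$ is naturally isomorphic to ${\bf i}(X)$ by Proposition~\ref{prop:Y-i} (and Remark~\ref{rem:exp-res}), and the left-hand term is, by Proposition~\ref{prop:bar-p}, precisely $\bar{\bf p}(X)$ once one recalls ${\bf p}(X)=\mathbb{B}\otimes_\Lambda X$; so the left-hand term lies in ${\rm Im}(\bar{\bf p})$ for free.

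First I would pin down the identifications so that the above triangle matches the one in $(\ref{tri:Kra})$ applied to ${\bf i}(X)$. Concretely, under the identification $\mathcal{Y}(\Lambda,X)\simeq{\bf i}(X)$, I need to check that the composite
\[
\bar{\bf p}(X)=\mathcal{Y}(\Lambda,\Lambda)\otimes_\Lambda\mathbb{B}\otimes_\Lambda X\xrightarrow{\ \kappa_X\ }\mathcal{Y}(\Lambda,X)\simeq{\bf i}(X)
\]
agrees, up to homotopy, with $u_{{\bf i}(X)}\colon\bar{\bf p}({\bf i}(X))\to{\bf i}(X)$ precomposed with the canonical isomorphism $\bar{\bf p}(X)\xrightarrow{\ \sim\ }\bar{\bf p}{\bf i}(X)$ (the latter coming from the quasi-isomorphism $\eta_X\colon X\to{\bf i}(X)$, to which $\bar{\bf p}$ is applied). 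The key tool here is the last sentence of Remark~\ref{rem:counit}: since ${\bf i}(X)$ is dg-injective, a map $\bar{\bf p}({\bf i}(X))\to{\bf i}(X)$ is determined up to homotopy by requiring that precomposition with $\eta_\Lambda\otimes_\Lambda{\rm Id}_{{\bf p}({\bf i}(X))}$ recovers the dg-projective resolution $\pi_{{\bf i}(X)}$. So it suffices to produce the commuting triangle of type $(\ref{comm-tri})$ for the composite $\kappa_X$ (suitably transported along the identifications), i.e.\ to verify that precomposing $\kappa_X$ with $\eta_\Lambda\otimes_\Lambda{\rm Id}_{\mathbb{B}\otimes_\Lambda X}$ yields (up to homotopy, in fact essentially on the nose after chasing the explicit formulas for $\epsilon_X$, $\eta_\Lambda$ and $\varepsilon\otimes_\Lambda{\rm Id}_X$) the composite $\mathbb{B}\otimes_\Lambda X\to X\xrightarrow{\eta_X}\mathcal{Y}(\Lambda,X)$, which is the dg-projective resolution of $\mathcal{Y}(\Lambda,X)\simeq{\bf i}(X)$.

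The verification in the previous paragraph is where the explicit descriptions of $\kappa_X$ (the formula after $(\ref{equ:kappa})$, with the Kronecker $\delta_{q,0}$), of $\eta_\Lambda$ and $\epsilon_X$ from Section~\ref{sec:epsilon}, and of the commutative triangle defining $\epsilon_X$ all come into play; I expect the chain $\eta_\Lambda\otimes_\Lambda{\rm Id}\mapsto\epsilon_X\circ(\eta_\Lambda\otimes_\Lambda{\rm Id}_X)=\eta_X$ to drop out from the commutative triangle for $\epsilon_X$ in Section~\ref{sec:epsilon} together with $(\varepsilon\otimes_\Lambda{\rm Id}_X)\circ(\text{inclusion of }\mathbb{B}_{\leq 0})$. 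Finally I would assemble everything: by Remark~\ref{rem:unique-S} there is a unique isomorphism ${\rm Cone}(\kappa_X)\to\mathbb{S}(X)$ fitting into the displayed morphism of triangles, and naturality in $X$ follows from the functoriality of triangle $(\ref{tri:Kra})$ together with the naturality of all the identifications ($\alpha_{\Lambda,-}$, $\eta_{-}$, Proposition~\ref{prop:bar-p}) and the fact, already noted in the excerpt via the displayed morphism of triangles, that ${\rm Cone}(\kappa_{-})$ is itself a triangle functor. The main obstacle is the compatibility check in the second paragraph: matching $\kappa_X$ with the (non-explicit!) counit $u_{{\bf i}(X)}$ can only be done through the homotopy-uniqueness afforded by dg-injectivity of ${\bf i}(X)$, and one must be careful that the identifications $\mathcal{Y}(\Lambda,\Lambda)\otimes_\Lambda\mathbb{B}\otimes_\Lambda X\simeq\bar{\bf p}(X)\simeq\bar{\bf p}{\bf i}(X)$ are the ones making the whole diagram commute, not merely isomorphic term by term.
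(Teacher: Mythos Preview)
Your approach is essentially the paper's: form the standard triangle on $\kappa_X$, identify the left term with $\bar{\bf p}(X)$ via Proposition~\ref{prop:bar-p} and the middle term with ${\bf i}(X)$ via Proposition~\ref{prop:Y-i}, note that ${\rm Cone}(\kappa_X)$ is acyclic, and invoke Remark~\ref{rem:unique-S}. The paper's proof is exactly this, in four sentences.

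Where you diverge is in the second and third paragraphs: you treat as the ``main obstacle'' the verification that $\kappa_X$, transported along the identifications, agrees up to homotopy with the counit $u_{{\bf i}(X)}$. This step is \emph{unnecessary}. Remark~\ref{rem:unique-S} does not ask that the map $I_1\to{\bf i}(X)$ be the counit; it only asks that $I_1\in{\rm Im}(\bar{\bf p})$ and $I_2\in\mathbf{K}_{\rm ac}(\Lambda\mbox{-Inj})$. Any such triangle is then uniquely isomorphic to the canonical one \eqref{tri:Kra}, and in particular the isomorphism $g_1\colon I_1\to\bar{\bf p}{\bf i}(X)$ comes for free---you do not have to produce it by hand. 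Naturality of $g_X\colon{\rm Cone}(\kappa_X)\to\mathbb{S}(X)$ then follows immediately from this same uniqueness (any morphism $X\to X'$ induces a morphism of triangles, and the compatible isomorphisms are forced). So your proof is correct but over-engineered: the compatibility check via Remark~\ref{rem:counit} can simply be deleted.
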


\begin{proof}
We consider the standard triangle
$$\mathcal{Y}(\Lambda, \Lambda)\otimes_\Lambda \mathbb{B}\otimes_\Lambda {X} \stackrel{\kappa_{X}} \longrightarrow \mathcal{Y}(\Lambda, X) \longrightarrow  {\rm Cone}(\kappa_{X})\longrightarrow \Sigma(\mathcal{Y}(\Lambda, \Lambda)\otimes_\Lambda \mathbb{B}\otimes_\Lambda X).$$
By Proposition~\ref{prop:bar-p}, we identify  $\mathcal{Y}(\Lambda, \Lambda)\otimes_\Lambda \mathbb{B}\otimes_\Lambda {X}$ with $\bar{\bf p}(X)$. In particular, it lies in ${\rm Im}(\bar{\bf p})$. As mentioned above, ${\rm Cone}(\kappa_{X})$ is an acyclic complex of injective modules. We apply the uniqueness of the functorial exact triangle in Remark~\ref{rem:unique-S} to obtain a unique isomorphism $g_X\colon {\rm Cone}(\kappa_X)\rightarrow \mathbb{S}(X)$. This uniqueness also implies that $g_X$ is functorial in $X$, as required.
\end{proof}

\begin{rem}
The recollement \eqref{rec:Kra} may be rewritten as follows.
\begin{equation}
\xymatrix@C=4pc{
\mathbf{K}_{\rm ac}(\Lambda\mbox{-Inj})  \ar[rr]^-{\rm inc} &&  \ar@/_1.5pc/[ll]|{\bar{\bf a}}  \ar@/^1.5pc/[ll]|{{\bf a}' = {\rm Hom}_\Lambda({\rm Cone}(\varepsilon), -)} \mathbf{K}(\Lambda\mbox{-Inj})  \ar[rr]^{\rm can} && \mathbf{D}(\Lambda\mbox{-Mod}) \ar@/_1.5pc/[ll]|{\bar{\bf p}=\mathcal{Y}(\Lambda, \Lambda)\otimes_\Lambda \mathbb B \otimes_\Lambda -}  \ar@/^1.5pc/[ll]|{{\bf i} = \mathcal{Y}(\Lambda, -)}
}
\end{equation}
As mentioned in Remark~\ref{rem:counit}, the counit $u_I$ is not explicitly given.  Therefore, it is difficult to describe the functor $\bar {\bf a}$.  In this sense, the description of $\mathbb{S}=\bar{\bf a}{\bf i}$ in Theorem~\ref{thm:S}  is nontrivial.

As mentioned before, by \cite[1.4.6]{BBD} we have $\mathbb{S}\simeq \Sigma {\bf a}'\bar{\bf p}$. Therefore,
$$\mathbb{S}(X) \simeq \Sigma \Hom_\Lambda({\rm Cone}(\varepsilon), \bar{\bf p}(X)). $$
Applying the functor $\Hom_\Lambda(-, \bar{\bf p}(X))$ to the standard triangle \eqref{tri:bar}, we obtain an exact triangle  in $\mathbf{K}(\Lambda\mbox{-Inj})$
\[
 \bar{\bf p}(X) \xrightarrow{\eta_{\bar{\bf p}(X)}}   \mathcal{Y}(\Lambda, \bar{\bf p}(X))  \longrightarrow \mathbb{S}(X) \longrightarrow \Sigma \bar{\bf p}(X),
\]
where we apply Remark~\ref{rem:exp-res} to $\bar{\bf p}(X)$. Moreover, we have the following commutative diagram
\[
\xymatrix{
 \bar{\bf p}(X) \ar@{=}[d] \ar[r]^-{\eta_{\bar{\bf p}(X)}} &  \mathcal{Y}(\Lambda, \bar{\bf p}(X))  \ar[r] \ar[d]&  \mathbb{S}(X) \ar[r] \ar@{.>}[d] & \Sigma \bar{\bf p}(X)\ar@{=}[d] \\
  \bar{\bf p}(X) \ar[r]^-{\kappa_X} &  \mathcal{Y}(\Lambda, X)  \ar[r] &  {\rm Cone}(\kappa_X) \ar[r] & \Sigma \bar{\bf p}(X),
}
\]
where from the left, the second vertical arrow is $(\eta_{\mathcal Y(\Lambda, X)})^{-1} \circ \mathcal{Y}(\Lambda, \kappa_X)$, which is an isomorphism in $\mathbf{K}(\Lambda\mbox{-Inj})$. This yields another proof of Theorem~\ref{thm:S}.
\end{rem}

Denote by $\mathbf{D}^b(\Lambda\mbox{-mod})$ the bounded derived category of finitely generated $\Lambda$-modules. We view the bounded homotopy category $\mathbf{K}^b(\Lambda\mbox{-proj})$ of finitely generated projective modules as a triangulated subcategory of $\mathbf{D}^b(\Lambda\mbox{-mod})$. The \emph{singularity category} \cite{Buc, Orl} of $\Lambda$ is the Verdier quotient category
$$\mathbf{D}_{\rm sg}(\Lambda)=\mathbf{D}^b(\Lambda\mbox{-mod})/{\mathbf{K}^b(\Lambda\mbox{-proj})}.$$
It has a canonical dg enhancement, as explained below. Denote by $\mathcal{D}=\mathbf{D}_{\rm dg}^b(\Lambda\mbox{-mod})$ the bounded dg derived category, and by $\mathcal{P}$ its full dg subcategory formed by bounded complexes of projective modules. Following \cite{Kel18}, the \emph{dg singularity category} of $\Lambda$ is the dg quotient category $\mathcal{S}=\mathcal{D}/\mathcal{P}$. Then the homotopy category $H^0(\mathcal{S})$ is identified with $\mathbf{D}_{\rm sg}(\Lambda)$. For details on dg quotient categories, we refer to \cite{Kel99, Dri, CC}.

\begin{rem}
Keep the notation as above. We have the inclusion functor ${\rm inc}\colon \mathcal{P}\rightarrow \mathcal{D}$ and the quotient functor $\pi\colon \mathcal{D}\rightarrow \mathcal{S}$. By \cite[Proposition~4.6(ii)]{Dri}, these dg functors induce a recollement of derived categories; see also \cite[Theorem~5.1.3]{CC}.
\begin{equation*}
\xymatrix@C=4pc{
\mathbf{D}(\mathcal{S}) \ar[rr]^-{\rm can} &&  \ar@/_1.5pc/[ll]|{-\otimes^{\mathbb{L}}_\mathcal{D}\mathcal{S}}  \ar@/^1.5pc/[ll]|{\mathbb{R}{\rm Hom}_\mathcal{D}(\mathcal{S}, -)}  \mathbf{D}(\mathcal{D})  \ar[rr]^{\rm res} && \mathbf{D}(\mathcal{P}) \ar@/_1.5pc/[ll]|{-\otimes^{\mathbb{L}}_\mathcal{P}\mathcal{D}}  \ar@/^1.5pc/[ll]|{\mathbb{R}{\rm Hom}_\mathcal{P}(\mathcal{D}, -)}}
\end{equation*}
Here, for any small dg category $\mathcal{C}$, we denote by $\mathcal{D}(\mathcal{C})$ the derived category of right dg $\mathcal{C}$-modules; ``res" means the restriction of dg $\mathcal{D}$-modules to $\mathcal{P}$, and ``can" sends any dg $\mathcal{S}$-module $M$ to the composition $M\pi$. By \cite[Appendix~A]{Kra}, the recollement  \eqref{rec:Kra} is isomorphic to the above one; compare \cite[Theorem~2.2]{CLW}.  In comparison, we emphasize that the categories in \eqref{rec:Kra} seems to be more accessible. The stabilization functor $\mathbb{S}=\overline{\bf a} {\bf i}$ is isomorphic to $\mathbb{R}{\rm Hom}_\mathcal{P}(\mathcal{D}, -)\otimes^\mathbb{L}_\mathcal{D}\mathcal{S}$. However, the latter seems to be  hard to deal with.
\end{rem}

\section{Comparing the two functors}\label{sec:8}

In this final section, we compare the two triangle functors: $\mathcal{SY}(\Lambda, -)$ and $\mathbb{S}$. Both are from $\mathbf{D}(\Lambda\mbox{-Mod})$ to $\mathbf{K}_{\rm ac}(\Lambda\mbox{-Inj})$; see (\ref{fun:SY}) and Definition~\ref{defn:S}.

\subsection{A natural transformation}

By Theorem~\ref{thm:S}, we will identify $\mathbb{S}$ with ${\rm Cone}(\kappa_{-})$. Therefore, for each complex $X$ of $\Lambda$-modules, we have a standard exact triangle (\ref{tri:standard}) in $\mathbf{K}(\Lambda\mbox{-Inj})$:
\begin{align}\label{tri:S}
\mathcal{Y}(\Lambda, \Lambda)\otimes_\Lambda \mathbb{B}\otimes_\Lambda X \stackrel{\kappa_X}\longrightarrow \mathcal{Y}(\Lambda, X) \stackrel{\binom{\mathbf{1}}{0}} \longrightarrow \mathbb{S}(X)\stackrel{(0\; \mathbf{1})}\longrightarrow \Sigma(\mathcal{Y}(\Lambda, \Lambda)\otimes_\Lambda \mathbb{B}\otimes_\Lambda X).
\end{align}
By Proposition~\ref{prop:bar-p}, we have $\mathcal{Y}(\Lambda, \Lambda)\otimes_\Lambda \mathbb{B}\otimes_\Lambda X \simeq \bar{\bf p}(X)$. By Proposition~\ref{prop:SY} the complex $\mathcal{SY}(\Lambda, X)$ is acyclic and consisting  of injective modules. So, the upper half of the recollement (\ref{rec:Kra}) implies that
 $${\rm Hom}_{\mathbf{K}(\Lambda\mbox{-}{\rm Inj})}(\mathcal{Y}(\Lambda, \Lambda)\otimes_\Lambda \mathbb{B}\otimes_\Lambda X, \Sigma^n\mathcal{SY}(\Lambda, X))=0$$
  for any $n\in \mathbb{Z}$. Consider the canonical map $\mathcal{Y}(\Lambda, X)\rightarrow \mathcal{SY}(\Lambda, X)$ sending $f$ to $[f;0]$. It follows that there is a unique morphism in $\mathbf{K}(\Lambda\mbox{-Inj})$
$$c_X\colon \mathbb{S}(X)\longrightarrow \mathcal{SY}(\Lambda, X)$$
such that its composition with $\binom{\mathbf{1}}{0}\colon \mathcal{Y}(\Lambda, X)\rightarrow \mathbb{S}(X)$ equals the canonical map. The uniqueness of $c_X$ implies that it is functorial in $X$. We will use the natural transformation $c$ to compare the two functors.

We will use the following natural isomorphism to identify these complexes.
\begin{align}\label{iso:identi}
{\rm colim}\; \mathcal{Y}(\Lambda, \Lambda)\otimes_\Lambda \mathbb{B}_{\leq p}\otimes_\Lambda X\simeq \mathcal{Y}(\Lambda, \Lambda)\otimes_\Lambda \mathbb{B}\otimes_\Lambda X.
\end{align}
Recall from (\ref{equ:epsilon}) the following quasi-isomorphism for each $p\geq 0$.
$$\epsilon_{\mathbb{B}_{\leq p}\otimes_\Lambda X}\colon \mathcal{Y}(\Lambda, \Lambda)\otimes_\Lambda \mathbb{B}_{\leq p}\otimes_\Lambda X\longrightarrow \mathcal{Y}(\Lambda,  \mathbb{B}_{\leq p}\otimes_\Lambda X)$$
These quasi-isomorphisms induce a quasi-isomorphism
$${\rm colim}\; \epsilon_{\mathbb{B}_{\leq p}\otimes_\Lambda X} \colon {\rm colim}\; \mathcal{Y}(\Lambda, \Lambda)\otimes_\Lambda \mathbb{B}_{\leq p}\otimes_\Lambda X\longrightarrow {\rm colim}\; \mathcal{Y}(\Lambda, \mathbb{B}_{\leq p}\otimes_\Lambda X).$$
Thanks to the identification (\ref{iso:identi}) and by abuse of notation, we have the following quasi-isomorphism
\begin{align}\label{iso:colim-epsilon}
{\rm colim}\; \epsilon_{\mathbb{B}_{\leq p}\otimes_\Lambda X} \colon  \mathcal{Y}(\Lambda, \Lambda)\otimes_\Lambda \mathbb{B} \otimes_\Lambda X \longrightarrow {\rm colim}\; \mathcal{Y}(\Lambda, \mathbb{B}_{\leq p}\otimes_\Lambda X).
\end{align}
Finally,  we observe a canonical isomorphism
\begin{align}\label{iso:cone-colim}
{\rm Cone}({\rm colim}\; \epsilon_{\mathbb{B}_{\leq p}\otimes_\Lambda X})\simeq {\rm colim}\; {\rm Cone}(\epsilon_{\mathbb{B}_{\leq p}\otimes_\Lambda X})),
\end{align}
as taking cones and taking colimits are compatible.

The following main result describes the mapping cone of $c_X$ in terms of an explicit colimit.

\begin{thm}\label{thm:comparison}
Assume that $\Lambda$ is left noetherian. Then for each complex $X$, there is an exact triangle in $\mathbf{K}_{\rm ac}(\Lambda\mbox{-}{\rm Inj})$.
$${\rm colim}\; {\rm Cone}(\epsilon_{\mathbb{B}_{\leq p}\otimes_\Lambda X}) \longrightarrow \mathbb{S}(X)\stackrel{c_X}\longrightarrow \mathcal{SY}(\Lambda, X)\longrightarrow \Sigma({\rm colim}\; {\rm Cone}(\epsilon_{\mathbb{B}_{\leq p}\otimes_\Lambda X}))$$
Consequently, $c_X$ is a homotopy equivalence if and only if ${\rm colim}\; {\rm Cone}(\epsilon_{\mathbb{B}_{\leq p}\otimes_\Lambda X})$ is contractible.
\end{thm}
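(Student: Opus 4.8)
The plan is to recognise $c_X$ as the morphism induced on mapping cones by an explicit commuting square over the common object $\mathcal{Y}(\Lambda, X)$, and then to compute that cone. Concretely, Theorem~\ref{thm:S} identifies $\mathbb{S}(X)$ with ${\rm Cone}(\kappa_X)$ sitting in the triangle~\eqref{tri:S} $\mathcal{Y}(\Lambda,\Lambda)\otimes_\Lambda\mathbb{B}\otimes_\Lambda X\xrightarrow{\kappa_X}\mathcal{Y}(\Lambda, X)\xrightarrow{\binom{\mathbf 1}{0}}\mathbb{S}(X)\to\Sigma(-)$, while Theorem~\ref{thm:SY} puts $\mathcal{SY}(\Lambda, X)$ into the triangle ${\rm colim}\;\mathcal{Y}(\Lambda,\mathbb{B}_{\leq p}\otimes_\Lambda X)\xrightarrow{\vartheta_X}\mathcal{Y}(\Lambda, X)\to\mathcal{SY}(\Lambda, X)\to\Sigma(-)$, whose middle arrow is the canonical map $f\mapsto[f;0]$. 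Writing $v$ for the quasi-isomorphism ${\rm colim}\;\epsilon_{\mathbb{B}_{\leq p}\otimes_\Lambda X}$ of~\eqref{iso:colim-epsilon} (obtained via the identification~\eqref{iso:identi}), the whole proof reduces to checking that the square with top row $v$, bottom row ${\rm id}_{\mathcal{Y}(\Lambda, X)}$, left column $\kappa_X$ and right column $\vartheta_X$ commutes, and then running the standard $3\times 3$ argument.

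\emph{The commuting square.} I would verify $\vartheta_X\circ v=\kappa_X$ directly, in fact as an equality of cochain maps. One evaluates both sides on a generator $f\otimes_\Lambda(a_0\otimes s\bar{a}_{1,q}\otimes 1)\otimes_\Lambda x$ with $f\in\mathcal{Y}_p(\Lambda,\Lambda)$: applying $v$ amounts to applying $\epsilon_{\mathbb{B}_{\leq q}\otimes_\Lambda X}$, which by the formula following~\eqref{equ:epsilon} pushes the scalar $f(s\bar{b}_{1,p}\otimes b)$ into $a_0$ using the left $\Lambda$-action on $\mathbb{B}$; composing with $\vartheta_X$ then post-composes with $\varepsilon\otimes_\Lambda{\rm Id}_X$ as in the description after~\eqref{equ:vartheta}, and $\varepsilon$ annihilates the element unless $q=0$. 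The outcome matches the explicit formula for $\kappa_X$ recorded after~\eqref{equ:kappa} term by term, Kronecker symbol $\delta_{q,0}$ included. This is the only computation in the proof, and it is short precisely because all of $\kappa_X$, $\vartheta_X$, $\varepsilon$ and $\epsilon$ have been written out.

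\emph{From the square to the triangle.} Since the square commutes in $\mathbf{K}(\Lambda\mbox{-Inj})$ and its right column is an isomorphism, it extends to a morphism between the two exact triangles, with third component some $w\colon\mathbb{S}(X)\to\mathcal{SY}(\Lambda, X)$. Then $w\circ\binom{\mathbf 1}{0}$ is the canonical map $\mathcal{Y}(\Lambda, X)\to\mathcal{SY}(\Lambda, X)$, so by the uniqueness defining $c_X$ — which uses ${\rm Hom}_{\mathbf{K}(\Lambda\mbox{-Inj})}(\mathcal{Y}(\Lambda,\Lambda)\otimes_\Lambda\mathbb{B}\otimes_\Lambda X,\Sigma^n\mathcal{SY}(\Lambda, X))=0$ — we get $w=c_X$. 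Now, in a morphism of exact triangles with invertible middle column the cone of the third column is the suspension of the cone of the first; here the middle cone is $0$, so ${\rm Cone}(c_X)\simeq\Sigma\,{\rm Cone}(v)$ in $\mathbf{K}(\Lambda\mbox{-Inj})$. By~\eqref{iso:cone-colim}, ${\rm Cone}(v)\simeq{\rm colim}\;{\rm Cone}(\epsilon_{\mathbb{B}_{\leq p}\otimes_\Lambda X})$, and each ${\rm Cone}(\epsilon_{\mathbb{B}_{\leq p}\otimes_\Lambda X})$, being the cone of a quasi-isomorphism between complexes of injective modules, is acyclic with injective components; as $\Lambda$ is left noetherian the colimit remains acyclic with injective components, so it lies in $\mathbf{K}_{\rm ac}(\Lambda\mbox{-Inj})$. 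Rotating the triangle $\mathbb{S}(X)\xrightarrow{c_X}\mathcal{SY}(\Lambda, X)\to{\rm Cone}(c_X)\to\Sigma\mathbb{S}(X)$ and substituting gives the asserted triangle; and then $c_X$ is a homotopy equivalence iff its cone vanishes in $\mathbf{K}_{\rm ac}(\Lambda\mbox{-Inj})$, iff the complex of injectives ${\rm colim}\;{\rm Cone}(\epsilon_{\mathbb{B}_{\leq p}\otimes_\Lambda X})$ is contractible.

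\emph{Main obstacle.} There is no substantial obstacle once Theorems~\ref{thm:SY} and~\ref{thm:S} are available: the content is the compatibility $\vartheta_X\circ v=\kappa_X$, which is a finite check. The one point requiring care is that $v$ is merely a quasi-isomorphism, not a homotopy equivalence in general — this is exactly why the final `if and only if' is a genuine condition on $X$ — so one must argue throughout inside the triangulated category $\mathbf{K}(\Lambda\mbox{-Inj})$ rather than pretending $v$ is invertible. No Gorenstein or cohomological boundedness hypothesis is used; left noetherianness enters only through the cited theorems and through the permanence of injectivity under filtered colimits.
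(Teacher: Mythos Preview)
Your proof is correct and follows essentially the same approach as the paper: verify the commuting square $\vartheta_X\circ({\rm colim}\;\epsilon_{\mathbb{B}_{\leq p}\otimes_\Lambda X})=\kappa_X$ in $C(\Lambda\mbox{-Inj})$, extend it to a morphism of the exact triangles from Theorems~\ref{thm:S} and~\ref{thm:SY}, identify the third component as $c_X$ via its defining uniqueness, and then read off the cone of $c_X$. The paper phrases the last step as ``applying the octahedral axiom (TR4)'' while you invoke the $3\times3$/cone-of-cones argument; these are the same content, and your additional remark that the colimit of the acyclic injective complexes stays in $\mathbf{K}_{\rm ac}(\Lambda\mbox{-Inj})$ is a welcome sanity check that the paper leaves implicit.
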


\begin{proof}
We will compare (\ref{tri:S}) with the exact triangle in Theorem~\ref{thm:SY}. It is direct to verify that the following diagram
\[
\xymatrix{
\mathcal{Y}(\Lambda, \Lambda)\otimes_\Lambda \mathbb{B}\otimes_\Lambda X  \ar[d]_-{{\rm colim}\; \epsilon_{\mathbb{B}_{\leq p}\otimes_\Lambda X}} \ar[rr]^-{\kappa_X} && \mathcal{Y}(\Lambda, X)\ar@{=}[d]\\
{\rm colim}\; \mathcal{Y}(\Lambda, \mathbb{B}_{\leq p}\otimes_\Lambda X) \ar[rr]^-{\vartheta_X} && \mathcal{Y}(\Lambda, X)
}\]
commutes in $C(\Lambda\mbox{-Inj})$, where the cochain map ${\rm colim}\; \epsilon_{\mathbb{B}_{\leq p}\otimes_\Lambda X}$ is explained in (\ref{iso:colim-epsilon}). Therefore, we have a commutative diagram in $\mathbf{K}(\Lambda\mbox{-Inj})$:
\[
\xymatrix{
\mathcal{Y}(\Lambda, \Lambda)\otimes_\Lambda \mathbb{B}\otimes_\Lambda X  \ar[d]_-{{\rm colim}\; \epsilon_{\mathbb{B}_{\leq p}\otimes_\Lambda X}} \ar[r]^-{\kappa_X} & \mathcal{Y}(\Lambda, X)\ar@{=}[d] \ar[r] & \mathbb{S}(X) \ar@{.>}[d]\ar[r]& \Sigma(\mathcal{Y}(\Lambda, \Lambda)\otimes_\Lambda \mathbb{B}\otimes_\Lambda X)\ar[d] \\
{\rm colim}\; \mathcal{Y}(\Lambda, \mathbb{B}_{\leq p}\otimes_\Lambda X) \ar[r]^-{\vartheta_X} & \mathcal{Y}(\Lambda, X) \ar[r] & \mathcal{SY}(\Lambda, X)\ar[r] &  \Sigma({\rm colim}\; \mathcal{Y}(\Lambda, \mathbb{B}_{\leq p}\otimes_\Lambda X)).
}\]
By the above uniqueness of $c_X$, the dotted arrow has to be $c_X$. Applying the octahedral axiom (TR4) to the above diagram and using (\ref{iso:cone-colim}), we infer the required statement.
 \end{proof}

Now we  use the results in Section~\ref{sec:epsilon} to investigate when $c_X$ is a homotopy equivalence.

\begin{prop}\label{prop:comparison}
Keep the assumptions in Theorem~\ref{thm:comparison}. Then the following hold.
\begin{enumerate}
\item If $X$ is cohomologically bounded below, then $c_X$ is a homotopy equivalence. Consequently, the restriction of $c$ to $\mathbf{D}^{+}(\Lambda\mbox{-}{\rm Mod})$ is a natural isomorphism.
\item Assume that $\Lambda$ satisfies the equivalent conditions in Proposition~\ref{prop:coprod}. Then $c$ is a natural isomorphism.
\end{enumerate}
\end{prop}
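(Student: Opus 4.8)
The plan is to derive both parts directly from Theorem~\ref{thm:comparison}, which reduces each claim to the vanishing of ${\rm colim}\; {\rm Cone}(\epsilon_{\mathbb{B}_{\leq p}\otimes_\Lambda X})$ in $\mathbf{K}(\Lambda\mbox{-Inj})$. The starting observation is that $\epsilon_Y$ is a homotopy equivalence exactly when $Y\in\mathcal{K}$, by the definition \eqref{defn:K}; hence ${\rm Cone}(\epsilon_{\mathbb{B}_{\leq p}\otimes_\Lambda X})$ is a contractible complex whenever $\mathbb{B}_{\leq p}\otimes_\Lambda X\in\mathcal{K}$. It is moreover a complex of injective $\Lambda$-modules: $\mathcal{Y}(\Lambda,\mathbb{B}_{\leq p}\otimes_\Lambda X)$ is dg-injective by Proposition~\ref{prop:Y-i}, while $\mathcal{Y}(\Lambda,\Lambda)\otimes_\Lambda\mathbb{B}_{\leq p}\otimes_\Lambda X$ has injective components because $\mathcal{Y}(\Lambda,\Lambda)$ is a complex of injectives and $\Lambda$ is left noetherian, so coproducts of injectives are injective, exactly as in the proof of Proposition~\ref{prop:bounded-below}.

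For part (1), Proposition~\ref{prop:bounded-below} gives $\mathbb{B}_{\leq p}\otimes_\Lambda X\in\mathcal{K}$ for every $p\geq 0$ when $X$ is cohomologically bounded below; for part (2), Proposition~\ref{prop:Gorenstein} gives the same conclusion for an arbitrary complex $X$ under the hypotheses of Proposition~\ref{prop:coprod}. In either case every ${\rm Cone}(\epsilon_{\mathbb{B}_{\leq p}\otimes_\Lambda X})$ is contractible, and the colimit in Theorem~\ref{thm:comparison} is taken along the maps induced by the inclusions $\mathbb{B}_{\leq p}\otimes_\Lambda X\subseteq\mathbb{B}_{\leq p+1}\otimes_\Lambda X$.

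It then remains to see that a sequential colimit of contractible complexes of injectives is contractible. I would read this off Lemma~\ref{lem:colimit-inj} with the bottom row equal to the zero system: each structure map ${\rm Cone}(\epsilon_{\mathbb{B}_{\leq p}\otimes_\Lambda X})\to 0$ is a homotopy equivalence since its source is contractible, so the induced map ${\rm colim}\;{\rm Cone}(\epsilon_{\mathbb{B}_{\leq p}\otimes_\Lambda X})\to 0$ is a homotopy equivalence, i.e.\ the colimit is contractible. Alternatively, one runs the split telescope short exact sequence used in the proof of Lemma~\ref{lem:colimit-inj} directly: $\bigoplus_p{\rm Cone}(\epsilon_{\mathbb{B}_{\leq p}\otimes_\Lambda X})$ is a contractible complex of injectives because $\Lambda$ is noetherian, and the resulting triangle forces the colimit to be contractible. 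By the last sentence of Theorem~\ref{thm:comparison}, $c_X$ is then a homotopy equivalence — for cohomologically bounded below $X$ in case (1), and for all $X$ in case (2). Since $c$ is a natural transformation, and a natural transformation that is an isomorphism on every object is a natural isomorphism, this shows $c$ restricts to a natural isomorphism on $\mathbf{D}^{+}(\Lambda\mbox{-Mod})$ in case (1) and is a natural isomorphism on all of $\mathbf{D}(\Lambda\mbox{-Mod})$ in case (2).

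I do not anticipate a genuine obstacle: the substantive content sits in Sections~\ref{sec:epsilon} and~\ref{sec:8}, namely Propositions~\ref{prop:bounded-below} and~\ref{prop:Gorenstein} and Theorem~\ref{thm:comparison}. The only points requiring a little care are confirming that the cones in question lie in $C(\Lambda\mbox{-Inj})$ so that Lemma~\ref{lem:colimit-inj} (and hence the noetherian hypothesis behind it) applies, and matching the directed system of cones appearing in Theorem~\ref{thm:comparison} with the one indexed by $\mathbb{B}_{\leq p}\otimes_\Lambda X\subseteq\mathbb{B}_{\leq p+1}\otimes_\Lambda X$, so that Propositions~\ref{prop:bounded-below} and~\ref{prop:Gorenstein} feed in termwise.
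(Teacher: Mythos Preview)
Your proposal is correct and follows essentially the same approach as the paper: reduce via Theorem~\ref{thm:comparison} to the contractibility of ${\rm colim}\; {\rm Cone}(\epsilon_{\mathbb{B}_{\leq p}\otimes_\Lambda X})$, verify each cone lies in $C(\Lambda\mbox{-Inj})$ and is contractible using Propositions~\ref{prop:bounded-below} and~\ref{prop:Gorenstein}, and then invoke Lemma~\ref{lem:colimit-inj}. Your justification is in fact slightly more detailed than the paper's (you spell out why the cones consist of injectives and how Lemma~\ref{lem:colimit-inj} applies with the zero system), but the structure is the same.
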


\begin{proof}
We observe that for each $p\geq 0$, ${\rm Cone}(\epsilon_{\mathbb{B}_{\leq p}\otimes_\Lambda X})$ is a complex of injective $\Lambda$-modules. Therefore, by Lemma~\ref{lem:colimit-inj}, if each ${\rm Cone}(\epsilon_{\mathbb{B}_{\leq p}\otimes_\Lambda X})$ is contractible, or equivalently, the complex $\mathbb{B}_{\leq p}\otimes_\Lambda X$ lies in $\mathcal{K}$ defined in (\ref{defn:K}), then ${\rm colim}\; {\rm Cone}(\epsilon_{\mathbb{B}_{\leq p}\otimes_\Lambda X})$ is also contractible. By Theorem~\ref{thm:comparison}, $c_X$ is a homotopy equivalence. Now, we deduce (1) from Proposition~\ref{prop:bounded-below}, and (2) from Proposition~\ref{prop:Gorenstein}, respectively.
\end{proof}

\begin{rem}
Since Gorenstein rings satisfy the conditions in Proposition~\ref{prop:coprod}, the functors $\mathbb{S}$ and $\mathcal{SY}(\Lambda, -)$ are isomorphic if $\Lambda$ is Gorenstein. In the general case, we suspect that they are not isomorphic on $\mathbf{D}(\Lambda\mbox{\rm -Mod})$.
\end{rem}

\subsection{An application}

We will apply the above results and lift \cite[Corollary~5.4]{Kra} to the dg level; see Remark~\ref{rem:dg}.

\begin{cor}\label{cor:comparison}
Let $\Lambda$ be left noetherian, and $X$ be a cohomologically bounded below complex of $\Lambda$-modules. Then for any complex $Y$,  the following map
$$\Hom_\Lambda( \mathcal{SY}(\Lambda, X),  \mathcal{SY}(\Lambda, Y))  \longrightarrow \Hom_\Lambda(  \mathcal{Y}(\Lambda, X) ,  \mathcal{SY}(\Lambda, Y)), \quad \phi\mapsto \phi\circ q_X$$
is a quasi-isomorphism, where $q_X\colon \mathcal{Y}(\Lambda, X)\rightarrow \mathcal{SY}(\Lambda, X)$ is the canonical map.
\end{cor}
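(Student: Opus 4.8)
The plan is to realize the map $\phi\mapsto\phi\circ q_X$ of the statement — call it $q_X^{\ast}$ — as one leg of the long exact sequence obtained by applying the Hom-complex functor ${\rm Hom}_\Lambda(-,\mathcal{SY}(\Lambda,Y))$ to a suitable exact triangle with third term $\bar{\bf p}(X)$, and then to show that this third term becomes acyclic after that functor. Write $N=\mathcal{SY}(\Lambda,Y)$; by Proposition~\ref{prop:SY} the complex $N$ is acyclic and consists of injective modules. The first task is to produce the triangle. Since $X$ is cohomologically bounded below, Proposition~\ref{prop:comparison}(1) gives that the comparison morphism $c_X\colon\mathbb{S}(X)\to\mathcal{SY}(\Lambda,X)$ is a homotopy equivalence. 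Transporting the standard triangle \eqref{tri:S} along $c_X$, rewriting $\mathcal{Y}(\Lambda,\Lambda)\otimes_\Lambda\mathbb{B}\otimes_\Lambda X$ as $\bar{\bf p}(X)$ via Proposition~\ref{prop:bar-p}, and using the defining property $c_X\circ\binom{\mathbf{1}}{0}=q_X$ of $c_X$, one obtains an exact triangle in $\mathbf{K}(\Lambda\mbox{-Inj})$
\[
\bar{\bf p}(X)\xrightarrow{\ \kappa_X\ }\mathcal{Y}(\Lambda,X)\xrightarrow{\ q_X\ }\mathcal{SY}(\Lambda,X)\longrightarrow\Sigma\bar{\bf p}(X);
\]
alternatively this is the triangle of Theorem~\ref{thm:comparison}, since Proposition~\ref{prop:comparison}(1) also tells us that ${\rm colim}\;{\rm Cone}(\epsilon_{\mathbb{B}_{\leq p}\otimes_\Lambda X})$ is contractible in the cohomologically-bounded-below case.

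Next I would apply ${\rm Hom}_\Lambda(-,N)$, which is a contravariant triangle functor on $\mathbf{K}(\Lambda\mbox{-Inj})$ (it carries a mapping cone to a shift of a mapping cone), to the triangle above. This yields an exact triangle of complexes of abelian groups
\begin{align*}
{\rm Hom}_\Lambda(\mathcal{SY}(\Lambda,X),N)\xrightarrow{\ q_X^{\ast}\ }{\rm Hom}_\Lambda(\mathcal{Y}(\Lambda,X),N)&\longrightarrow{\rm Hom}_\Lambda(\bar{\bf p}(X),N)\\
&\longrightarrow\Sigma{\rm Hom}_\Lambda(\mathcal{SY}(\Lambda,X),N),
\end{align*}
whose first arrow is exactly $q_X^{\ast}$. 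Hence it suffices to prove that ${\rm Hom}_\Lambda(\bar{\bf p}(X),N)$ is acyclic.

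This acyclicity is the only step with real content. Using \eqref{iso:keyfor} together with the inclusion $\mathbf{K}(\Lambda\mbox{-Inj})\subseteq\mathbf{K}(\Lambda\mbox{-Mod})$, we have $H^n{\rm Hom}_\Lambda(\bar{\bf p}(X),N)\cong{\rm Hom}_{\mathbf{K}(\Lambda\mbox{-Inj})}(\bar{\bf p}(X),\Sigma^nN)$ for each $n\in\mathbb{Z}$. By the adjunction $(\bar{\bf p},{\rm can})$ of the recollement \eqref{rec:Kra}, the right-hand side equals ${\rm Hom}_{\mathbf{D}(\Lambda\mbox{-Mod})}(X,{\rm can}(\Sigma^nN))$, and ${\rm can}(\Sigma^nN)=0$ in $\mathbf{D}(\Lambda\mbox{-Mod})$ because $N$ is acyclic; so all these cohomology groups vanish. (One could equally avoid the recollement: by the Hom-tensor adjunction ${\rm Hom}_\Lambda(\bar{\bf p}(X),N)\cong{\rm Hom}_\Lambda(\mathbb{B}\otimes_\Lambda X,{\rm Hom}_\Lambda(\mathcal{Y}(\Lambda,\Lambda),N))$; the inner complex is acyclic because ${\rm Hom}_\Lambda(\eta_\Lambda,N)\colon{\rm Hom}_\Lambda(\mathcal{Y}(\Lambda,\Lambda),N)\to N$ is a quasi-isomorphism onto the acyclic complex $N$ — exactly as in the proof of Proposition~\ref{prop:bar-p}, using that $N$ consists of injectives — and $\mathbb{B}\otimes_\Lambda X$ is $\mathbf{K}$-projective.) Combining this with the triangle of the previous paragraph gives that $q_X^{\ast}$ is a quasi-isomorphism.

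I do not expect a serious obstacle. The hypothesis that $X$ is cohomologically bounded below is used only to pass, via Proposition~\ref{prop:comparison}(1), from the a priori merely quasi-isomorphic complex ${\rm colim}\;\mathcal{Y}(\Lambda,\mathbb{B}_{\leq p}\otimes_\Lambda X)$ to $\bar{\bf p}(X)$ itself as the third term of the triangle; this is genuinely needed, since a complex of injectives quasi-isomorphic to an object of ${\rm Im}(\bar{\bf p})$ need not lie in ${\rm Im}(\bar{\bf p})$ inside $\mathbf{K}(\Lambda\mbox{-Inj})$. The only other point requiring mild care is that the arrow out of $\mathcal{Y}(\Lambda,X)$ in the first triangle is precisely $q_X$ — not merely some map homotopic to it up to an automorphism — which is forced by the defining property of $c_X$.
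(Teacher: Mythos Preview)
Your proposal is correct and follows essentially the same approach as the paper: obtain the exact triangle $\bar{\bf p}(X)\to\mathcal{Y}(\Lambda,X)\xrightarrow{q_X}\mathcal{SY}(\Lambda,X)\to\Sigma\bar{\bf p}(X)$ (the paper extracts it from the second commutative diagram in the proof of Theorem~\ref{thm:comparison} once ${\rm colim}\;\epsilon_{\mathbb{B}_{\leq p}\otimes_\Lambda X}$ is a homotopy equivalence, which is equivalent to your transport along $c_X$), then apply ${\rm Hom}_\Lambda(-,\mathcal{SY}(\Lambda,Y))$ and use the adjunction $(\bar{\bf p},{\rm can})$ to kill the third term.
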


\begin{proof}
By the proof of Proposition~\ref{prop:comparison}, ${\rm colim}\; {\rm Cone}(\epsilon_{\mathbb{B}_{\leq p}\otimes_\Lambda X})$ is contractible, or equivalently, ${\rm colim}\; \epsilon_{\mathbb{B}_{\leq p}\otimes_\Lambda X}$ is a homotopy equivalence. It follows from the second commutative diagram in the proof of Theorem~\ref{thm:comparison}  that we have an exact triangle in $\mathbf{K}(\Lambda\mbox{-Inj})$
$$\bar{\bf p}(X)\longrightarrow \mathcal{Y}(\Lambda, X) \stackrel{q_X}\longrightarrow \mathcal{SY}(\Lambda, X)\longrightarrow \Sigma \bar{\bf p}(X).$$
Here, we identify $\bar{\bf p}(X)$ with $\mathcal{Y}(\Lambda, \Lambda)\otimes_\Lambda \mathbb{B}\otimes_\Lambda X$. By the adjoint pair $(\bar{\bf p}, {\rm can})$ in (\ref{rec:Kra}), we infer that ${\rm Hom}_\Lambda(\bar{\bf p}(X), \mathcal{SY}(\Lambda, Y))$ is acyclic. Applying ${\rm Hom}_\Lambda(-, \mathcal{SY}(\Lambda, Y))$ to the above exact triangle, we infer the required quasi-isomorphism.
\end{proof}

The following consideration is analogous to the one in Proposition~\ref{prop:Y-quasi}.  Recall from \eqref{algin:odotsg} the composition $\odot_{\sg}$ in the singular Yoneda dg category $\mathcal{SY}$. Then
we have the following map of complexes
\begin{align*}
\varphi_{X, Y} \colon \mathcal{SY}(X, Y)\longrightarrow \Hom_{\Lambda}( \mathcal{SY}(\Lambda, X), \mathcal{SY}(\Lambda, Y)), \quad [f;p] \longmapsto ([g;q] \mapsto [f;p] \odot_{\sg} [g;q]).
 \end{align*}
 If $X= Y$ then $\varphi_{X, X}$ is a homomorphism between dg endomorphism algebras.

  We identify $\mathbf{D}^b(\Lambda\mbox{\rm -mod})$ with the full subcategory of $\mathbf{D}(\Lambda\mbox{-Mod})$ formed by cohomologically bounded complexes with finitely generated cohomological modules.

\begin{prop}\label{prop:SY-quasi}
 Let $\Lambda$ be left noetherian and  $X\in \mathbf{D}^b(\Lambda\mbox{\rm -mod})$. Then the map $\varphi_{X, Y}$  is a quasi-isomorphism for any complex $Y$. Consequently,  $\varphi_{X,X}$ is a quasi-isomorphism of dg algebras for any $X\in \mathbf{D}^b(\Lambda\mbox{\rm -mod})$.
\end{prop}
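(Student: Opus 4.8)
The plan is to deduce the statement from Proposition~\ref{prop:Y-quasi} and Corollary~\ref{cor:comparison}. Denote by $q_X\colon \mathcal{Y}(\Lambda, X)\to \mathcal{SY}(\Lambda, X)$, $f\mapsto [f;0]$, the canonical map, and for each $p\geq 0$ let $\sigma_p\colon \mathcal{Y}(\Lambda, \Omega_{\rm nc}^p(Y))\to \mathcal{SY}(\Lambda, Y)$, $h\mapsto [h;p]$, be the structure map of the colimit defining $\mathcal{SY}(\Lambda, Y)$. Since $X\in \mathbf{D}^b(\Lambda\mbox{-mod})$ is in particular cohomologically bounded below, Corollary~\ref{cor:comparison} shows that
$$\Hom_\Lambda(q_X, \mathcal{SY}(\Lambda, Y))\colon \Hom_\Lambda(\mathcal{SY}(\Lambda, X), \mathcal{SY}(\Lambda, Y))\longrightarrow \Hom_\Lambda(\mathcal{Y}(\Lambda, X), \mathcal{SY}(\Lambda, Y)),\quad \phi\mapsto \phi\circ q_X,$$
is a quasi-isomorphism. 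By the two-out-of-three property it therefore suffices to show that the composite $\varphi'_{X, Y}:=\Hom_\Lambda(q_X, \mathcal{SY}(\Lambda, Y))\circ \varphi_{X, Y}$ is a quasi-isomorphism; the assertion for $\varphi_{X, Y}$ follows at once, and in the case $Y=X$ so does the final claim, since $\varphi_{X, X}$ is a homomorphism of dg algebras.

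The next step is to make $\varphi'_{X, Y}$ explicit. Using \eqref{algin:odotsg} and $\Omega_{\rm nc}^0(Y)=Y$, one computes $[f;p]\odot_{\sg}[g;0]=[f\odot g;p]$ for $f\in \mathcal{Y}(X, \Omega_{\rm nc}^p(Y))$ and $g\in \mathcal{Y}(\Lambda, X)$, so that $\varphi'_{X, Y}$ carries the class $[f;p]$ to the $\Lambda$-linear map $g\mapsto \sigma_p(f\odot g)$. Since $\mathcal{SY}(X, Y)={\rm colim}_p\, \mathcal{Y}(X, \Omega_{\rm nc}^p(Y))$, this means precisely that on the $p$-th term of the colimit the map $\varphi'_{X, Y}$ restricts to
$$\mathcal{Y}(X, \Omega_{\rm nc}^p(Y))\xrightarrow{\ \Psi_{X,\Omega_{\rm nc}^p(Y)}\ }\Hom_\Lambda(\mathcal{Y}(\Lambda, X), \mathcal{Y}(\Lambda, \Omega_{\rm nc}^p(Y)))\xrightarrow{\ \Hom_\Lambda(\mathcal{Y}(\Lambda, X),\, \sigma_p)\ }\Hom_\Lambda(\mathcal{Y}(\Lambda, X), \mathcal{SY}(\Lambda, Y)),$$
where $\Psi_{X,\Omega_{\rm nc}^p(Y)}$ is the quasi-isomorphism of Proposition~\ref{prop:Y-quasi} applied to $\Omega_{\rm nc}^p(Y)$. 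By associativity of the Yoneda product $\odot$ these maps are compatible with the structure maps of the colimit on the source, so $\varphi'_{X, Y}$ factors as
$$\mathcal{SY}(X, Y)\xrightarrow{\ {\rm colim}_p\, \Psi_{X,\Omega_{\rm nc}^p(Y)}\ }{\rm colim}_p\, \Hom_\Lambda(\mathcal{Y}(\Lambda, X), \mathcal{Y}(\Lambda, \Omega_{\rm nc}^p(Y)))\xrightarrow{\ b\ }\Hom_\Lambda(\mathcal{Y}(\Lambda, X), \mathcal{SY}(\Lambda, Y)),$$
with $b$ the canonical comparison map from a colimit of Hom-complexes to the Hom-complex of the colimit. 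The first arrow is a quasi-isomorphism, because each $\Psi_{X,\Omega_{\rm nc}^p(Y)}$ is one and sequential colimits of complexes of abelian groups are exact.

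It remains to prove that $b$ is a quasi-isomorphism, and this is the step I expect to be the main obstacle; it is also the only place where $X\in \mathbf{D}^b(\Lambda\mbox{-mod})$ is really used, as opposed to mere boundedness below. The key point is that $\mathcal{Y}(\Lambda, X)\simeq {\bf i}(X)$ (Proposition~\ref{prop:Y-i}) is a \emph{compact} object of $\mathbf{K}(\Lambda\mbox{-Inj})$: the complex $X$ lies in the triangulated subcategory of $\mathbf{D}(\Lambda\mbox{-Mod})$ generated by the finitely generated modules, each ${\bf i}(M)$ is compact by \cite[Proposition~2.3]{Kra}, and compact objects are closed under triangles and shifts. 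Then, as in the proof of Lemma~\ref{lem:colimit-inj}, I would present the colimit by the componentwise split telescope exact sequence
$$0\longrightarrow \bigoplus_{p\geq 0}\mathcal{Y}(\Lambda, \Omega_{\rm nc}^p(Y))\longrightarrow \bigoplus_{p\geq 0}\mathcal{Y}(\Lambda, \Omega_{\rm nc}^p(Y))\longrightarrow \mathcal{SY}(\Lambda, Y)\longrightarrow 0$$
of complexes of injective $\Lambda$-modules (using that $\Lambda$ is left noetherian, so the coproduct is again a complex of injectives). Applying the triangle functor $\Hom_\Lambda(\mathcal{Y}(\Lambda, X), -)$ to the associated exact triangle and using that compactness of $\mathcal{Y}(\Lambda, X)$ makes the canonical map
$$\bigoplus_{p\geq 0}\Hom_\Lambda(\mathcal{Y}(\Lambda, X), \mathcal{Y}(\Lambda, \Omega_{\rm nc}^p(Y)))\longrightarrow \Hom_\Lambda\left(\mathcal{Y}(\Lambda, X), \bigoplus_{p\geq 0}\mathcal{Y}(\Lambda, \Omega_{\rm nc}^p(Y))\right)$$
a quasi-isomorphism (exactly as in the proof of Lemma~\ref{lem:generating}), one identifies $\Hom_\Lambda(\mathcal{Y}(\Lambda, X), \mathcal{SY}(\Lambda, Y))$ with the telescope, that is, with ${\rm colim}_p\, \Hom_\Lambda(\mathcal{Y}(\Lambda, X), \mathcal{Y}(\Lambda, \Omega_{\rm nc}^p(Y)))$, compatibly with $b$; hence $b$ is a quasi-isomorphism. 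Combining the three steps, $\varphi'_{X, Y}$, and therefore $\varphi_{X, Y}$, is a quasi-isomorphism, which completes the plan. In short, the one genuinely technical point is that $\Hom_\Lambda(\mathcal{Y}(\Lambda, X), -)$ commutes, up to quasi-isomorphism, with the sequential colimit defining $\mathcal{SY}(\Lambda, Y)$, and this rests on the compactness of $\mathcal{Y}(\Lambda, X)$ in $\mathbf{K}(\Lambda\mbox{-Inj})$.
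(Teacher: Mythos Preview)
Your proof is correct and follows essentially the same route as the paper: both factor $\psi_{X,Y}\circ\varphi_{X,Y}$ through ${\rm colim}_p\,\Hom_\Lambda(\mathcal{Y}(\Lambda,X),\mathcal{Y}(\Lambda,\Omega_{\rm nc}^p(Y)))$, invoke Proposition~\ref{prop:Y-quasi} for the first leg, compactness of $\mathcal{Y}(\Lambda,X)\simeq{\bf i}(X)$ in $\mathbf{K}(\Lambda\mbox{-Inj})$ for the second, and then cancel off the quasi-isomorphism of Corollary~\ref{cor:comparison}. The only difference is cosmetic: the paper cites \cite[Lemma~3.4.3]{Kra22} for the compactness/colimit step, whereas you spell out the telescope argument yourself.
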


\begin{proof}
We have the following natural maps between complexes.
\begin{align*}
\mathcal{SY}(X, Y) = {\rm colim}\; \mathcal{Y}(X, \Omega_{\rm nc}^p(Y))  &\longrightarrow  {\rm colim}\;  \Hom_\Lambda(\mathcal{Y}(\Lambda, X) , \mathcal{Y}(\Lambda, \Omega_{\rm nc}^p(Y)))\\
                       &\longrightarrow  \Hom_\Lambda(\mathcal{Y}(\Lambda, X) ,  \mathcal{SY}(\Lambda, Y))
\end{align*}
The first map is induced by the one in Proposition~\ref{prop:Y-quasi}, and thus a quasi-isomorphism. For the second one, we apply Lemma~\ref{lem:injres} to  identify $\mathcal{Y}(\Lambda, X)$ with ${\bf i}(X)$.  It follows from \cite[Proposition~2.3(2)]{Kra} that $\mathcal{Y}(\Lambda, X)$ is compact in $\mathbf{K}(\Lambda\mbox{-Inj})$. By \cite[Lemma~3.4.3]{Kra22}, we infer  that the second map is also a quasi-isomorphism.

Denote by $\psi_{X, Y}$ the quasi-isomorphism in Corollary~\ref{cor:comparison}.  It is routine to verify that the above composite quasi-isomorphism coincides with $\psi_{X, Y}\circ \varphi_{X, Y}$. This forces that $\varphi_{X, Y}$ is also a quasi-isomorphism.
\end{proof}

We recall that $\mathbf{D}_{\rm sg}(\Lambda)$ denotes the singularity category of $\Lambda$.

\begin{rem}\label{rem:dg}
Denote by $\mathcal{SY}^f$ the full dg subcategory of  $\mathcal{SY}$ formed by bounded complexes of finitely generated modules. Then $\mathcal{SY}^f$ is a dg enhancement of the singularity category $\mathbf{D}_{\rm sg}(\Lambda)$; see \cite[Corollary~9.3]{CW}. Proposition~\ref{prop:SY-quasi} implies that
the dg functor
$$\mathcal{SY}(\Lambda, -)\colon \mathcal{SY}^f\longrightarrow C_{\rm dg, ac}(\Lambda\mbox{-Inj})$$
is quasi-fully faithful. Taking their homotopy categories, we infer that
$$ \mathcal{SY}(\Lambda, -) \colon \mathbf{D}_{\rm sg}(\Lambda)\longrightarrow \mathbf{K}_{\rm ac}(\Lambda\mbox{-Inj})$$
is fully faithful; by abuse of notation, this functor is induced by  $ \mathcal{SY}(\Lambda, -) \colon \mathbf{D}^b(\Lambda\mbox{-mod})\rightarrow \mathbf{K}_{\rm ac}(\Lambda\mbox{-Inj}).$ This  recovers \cite[Corollary~5.4]{Kra}, as $\mathbb{S}$ and $\mathcal{SY}(\Lambda, -)$ are isomorphic on bounded complexes. As the discussion indicates, in comparison with $\mathbb{S}$, the triangle functor $\mathcal{SY}(\Lambda, -)$ naturally lifts to the dg level.
\end{rem}

We refer to \cite{ARS} for artin algebras and quivers.

\begin{rem}
Let $\Lambda$ be an artin algebra. Denote by $J=\mathrm{rad}(\Lambda)$ its Jacobson radical and set $E = \Lambda/J$. We assume that $E$ is a subalgebra of $\Lambda$ with $\Lambda=E\oplus J$. For instance, this assumption holds for any finite-dimensional algebra $\Lambda$ over a perfect field. Then
$\mathbb{S}(E) \simeq \mathcal{SY}(\Lambda, E)$ is a compact generator of $\mathbf{K}_{\rm ac}(\Lambda\mbox{-Inj})$. By Proposition~\ref{prop:SY-quasi}, the dg endomorphism algebra of  $\mathcal{SY}(\Lambda, E)$ is quasi-isomorphic to $\mathcal{SY}(E, E)$. By \cite[Theorem~9.5]{CW}, the latter is quasi-isomorphic to $L_E(J)^{\rm op}$, the opposite algebra of the \emph{dg Leavitt algebra} $L_E(J)$. 

In summary, the dg endomorphism algebra of $\mathbb{S}(E)$ is quasi-isomorphic to $L_E(J)^{\rm op}$, yielding another proof of \cite[Proposition~10.2]{CW}. If $\Lambda$ is given by a quiver with relations, the dg endomorphism algebra of $\mathbb{S}(E)$ is quasi-isomorphic to the opposite algebra of the dg Leavitt path algebra associated to the radical quiver of $\Lambda$; see \cite[Theorem 10.5]{CW}. 
\end{rem}

\vskip 5pt

\noindent {\bf Acknowledgements.}  \; The authors are very grateful to Bernhard Keller and Sergio Pavon for helpful comments. They thank  Srikanth Iyengar for the reference \cite{Goto}, and Gustavo Jasso for the reference \cite{KL}. This work is supported by the National Natural Science Foundation of China (No.s 11971449, 12131015, and 12161141001) and the Alexander von Humboldt Stiftung.

\vskip 10pt

 {\footnotesize \noindent Xiao-Wu Chen\\
 Key Laboratory of Wu Wen-Tsun Mathematics, Chinese Academy of Sciences\\
 School of Mathematical Sciences, University of Science and Technology of China\\
 Hefei 230026, Anhui, PR China\\
 xwchen$\symbol{64}$mail.ustc.edu.cn}

 \vskip 5pt

{\footnotesize \noindent Zhengfang Wang\\
Institute of Algebra and Number Theory, University of Stuttgart\\
Pfaffenwaldring 57, 70569 Stuttgart, Germany \\
 zhengfangw$\symbol{64}$gmail.com}

\end{document}